\newcommand{\ddp}[2]{\frac{\partial#1}{\partial#2}}
\newcommand{\D}{\partial D}
\newcommand{\B}{\mathcal{B}}
\renewcommand{\S}{\mathcal{S}}
\newcommand{\K}{\mathcal{K}}
\renewcommand*{\Re}{\operatorname{Re}}
\renewcommand*{\Im}{\operatorname{Im}}
\newcommand{\A}{\mathcal{A}}
\newcommand{\de}{\: \mathrm{d}}
\newcommand{\R}{\mathbb{R}}
\newcommand{\iu}{\mathrm{i}\mkern1mu}
\newcommand{\C}{\mathcal{C}}
\newcommand{\vb}{\mathbf{v}}
\newcommand{\w}{\mathbf{w}}
\renewcommand{\k}{\mathbf{k}}
\renewcommand{\P}{\mathcal{P}}
\newcommand{\I}{\mathcal{I}}
\renewcommand{\c}{\mathbf{c}}
\newcommand{\Z}{\mathbb{Z}}
\newcommand{\p}{\partial}
\renewcommand{\epsilon}{\varepsilon}
\newcommand{\dx}{\: \mathrm{d}}
\newcommand{\ie}{\textit{i.e.}}
\newcommand{\nm}{\noalign{\smallskip}}
\newcommand{\ds}{\displaystyle}
\newtheorem{thm}{Theorem}
\newtheorem{prop}[thm]{Proposition}
\newtheorem{lemma}[thm]{Lemma}
\newtheorem{remark}[thm]{Remark}
\numberwithin{equation}{section}
\numberwithin{thm}{section}
\title{Bound states in the continuum and {Fano} resonances in subwavelength resonator arrays}
\author{Habib Ammari\thanks{\footnotesize Department of Mathematics, ETH Z\"urich, R\"amistrasse 101, CH-8092 Z\"urich, Switzerland (habib.ammari@math.ethz.ch, bryn.davies@sam.math.ethz.ch, erik.orvehed.hiltunen@sam.math.ethz.ch).}\and Bryn Davies\footnotemark[1]  \and Erik Orvehed Hiltunen\footnotemark[1] \and  Hyundae Lee\thanks{\footnotesize  Department of Mathematics, Inha University,  253 Yonghyun-dong Nam-gu,  Incheon 402-751,  Korea (hdlee@inha.ac.kr).} \and Sanghyeon Yu\thanks{\footnotesize Department of Mathematics, Korea University, Seoul 02841, S. Korea (sanghyeon\_yu@korea.ac.kr).}}
\date{}
\begin{document}
	\maketitle
	
\begin{abstract}
	When wave scattering systems are subject to certain symmetries, resonant states may decouple from the far-field continuum; they remain localized to the structure and cannot be excited by incident waves from the far field. In this work, we use layer-potential techniques to prove the existence of such states, known as bound states in the continuum, in systems of subwavelength resonators. When the symmetry is slightly broken, this resonant state can be excited from the far field. Remarkably, this may create asymmetric (Fano-type) scattering behaviour where the transmission is fundamentally different for frequencies on either side of the resonant frequency. Using asymptotic analysis, we compute the scattering matrix of the system explicitly, thereby characterizing this Fano-type transmission anomaly.
\end{abstract}
	\vspace{0.5cm}
	\noindent{\textbf{Mathematics Subject Classification (MSC2000):} 35J05, 35C20, 35P20, 35P30.
		
		\vspace{0.2cm}
		
		\noindent{\textbf{Keywords:}} bound state in the continuum, Fano resonance, subwavelength resonance, metamaterials, scattering, metascreen, capacitance matrix
		\vspace{0.5cm}

	\section{Introduction}
	
	The existence of asymmetric peaks in transmission spectra is a curious phenomenon that has been studied at length in a variety of settings. Resonance peaks with this characteristic asymmetric shape are often known as \emph{Fano resonances} due to the work of Ugo Fano \cite{fano1961effects}, who observed this behaviour in the scattering of electrons by helium. Fano famously explained the asymmetric line shape as being due to the interference between a ``discrete state'' and a ``continuum''.
	
	The Fano-type resonance studied in this paper emerges from the interference between the two coupled resonant frequencies of a pair of resonators. In particular, we study resonator pairs which have been repeated periodically to form a metascreen. The first resonant frequency of this structure corresponds to the universal property that incoming plane waves with very low frequencies will be unaffected by the metascreen. This response occurs for a relatively broad range of frequencies and corresponds to the ``continuum'' of states that Fano described. In contrast, the second resonant frequency originates from the resonant behaviour of the metascreen. This peak depends heavily on the configuration of resonators within the metascreen and has a comparatively sharp response, corresponding to Fano's ``discrete state''. Since the width of the resonant peaks are proportional to the imaginary parts of corresponding resonances, the Fano-type resonance is characterised by the interference of two resonances with significantly different imaginary parts. By manipulating the parameters of the system, we are able to create interactions between these two resonant states, which leads to the creation of a Fano-type asymmetric transmission anomaly. Such an anomaly is depicted in \Cref{fig:lines}, where we sketch examples of transmission spectra (\textit{i.e}. the intensity of the transmitted field as a function of the frequency of the incident field). The difference between an asymmetric Fano-type line shape and a symmetric (\emph{Lorentzian}) profile is clear.
	
	The resonant properties of the metascreen can be fine-tuned by altering the parameters of the system. In the case that the metascreen is symmetric, we will prove that the second (sharper) resonant frequency is real.  Further, we will see that it corresponds to an eigenvalue that is embedded within the \emph{continuous radiation spectrum}, which is the spectrum of waves that can propagate into the far field.  Remarkably, we will show that the eigenmode associated with this real-valued resonant frequency vanishes in the far field, meaning that it will not interact with incoming waves and the corresponding resonance peak will therefore not appear in the transmission spectrum. A resonant state of this nature is known as a \emph{bound state in the continuum} and has a range of important applications in the design of lasers, filters and sensors \cite{hsu2016bound, kodigala2017lasing}.
	
	If we consider a metascreen that is no longer symmetric, then the real eigenvalue will be shifted into the lower complex plane and will correspond to a sharp peak in the transmission spectrum. The phase of the transmitted wave is different on each side of the peak and will interfere either constructively or destructively with the broad peak originating from the first resonant frequency. Thus, we obtain an asymmetric transmission spectrum that is characteristic of a Fano-type anomaly. This unusual transmission spectrum is not only of academic interest but has various applications, for example in the broadband manipulation of light \cite{wu2011broadband} and in the design of tunable sensors \cite{lassiter2010fano}.
	
	Phenomena similar to those studied in this work have have been studied in several other settings. For example, Fano-type anomalies have been observed in metallic gratings with repeated pairs of narrow slits \cite{lin2020mathematical, lin2020fano} as well as other dimerized structures \cite{limonov2017fano, song2019observation, zangeneh2019topological}. Likewise, bound states in the continuum have been both predicted theoretically and observed experimentally in periodic structures in photonics, optics, electrical circuits and quantum mechanics \cite{sadrieva2019experimental, kodigala2017lasing, hsu2016bound}. A variety of methods have been used to understand these phenomena including coupled-mode theory \cite{fan2003temporal}, analytic perturbation theory \cite{shipman2013resonant} and asymptotic methods \cite{lin2020mathematical, lin2020fano}.	
	
	In this work, we will study the existence of Fano-type resonances in systems of subwavelength resonators. That is, we will study a Helmholtz scattering problem posed on a system of material inclusions whose material parameters contrast greatly with those of the background medium. The main contribution of our work is a unified, mathematically rigorous, theory for both Fano-type resonances and bound states in the continuum. This extends the mathematical foundation of these phenomena, most notably the works \cite{lin2020mathematical, lin2020fano}, to the setting of high-contrast metamaterial crystals. We will perform asymptotic analysis in terms of the material contrast and define subwavelength resonant modes to be those whose frequencies converge continuously to zero in this limit \cite{ammari2021functional}. We will, first, recall asymptotic expressions for the subwavelength band structure in terms of the \emph{quasiperiodic capacitance matrix} (\Cref{thm:res_quasi}), before computing explicit expressions for the subwavelength band structure close to the origin, corresponding to the two resonances mentioned above (\Cref{thm:res0}). With this analysis in hand, we will prove that if the metascreen is symmetric then the second of these resonances is real (\Cref{prop:realeigen}) and the corresponding mode is a bound state in the continuum in the sense that it does not propagate into the far field (\Cref{prop:rad0}) and cannot be excited by waves incoming from the far field (\Cref{prop:trans0}). Finally, we will derive an expression for the scattering matrix which can be used to demonstrate the occurrence of a Fano-type transmission anomaly (\Cref{thm:phano}). This theoretical analysis is complemented by numerical simulations, which demonstrate Fano-type transmission anomalies for asymmetric structures (Figures~\ref{fig:fano}~to~\ref{fig:fanos3}) and a bound state in the continuum in the symmetric case (Figure~\ref{fig:fanoBIC}).

\begin{figure}
\begin{subfigure}[b]{0.45\linewidth}
	\centering
\begin{tikzpicture}[scale=1.5]
	\draw[line width=0.5mm,red] plot [smooth] coordinates {(0,0.08) (1.2,0.2) (1.5,1) (1.8,0.2) (3,0.08)};
	\draw[->,line width=0.4mm] (0,0) -- (3.2,0) node[pos=0.95, yshift=-8pt]{$\omega$};
	\draw[->,line width=0.4mm] (0,0) -- (0,1.2) node[pos=0.85, xshift=-8pt]{$T$};
\end{tikzpicture}
\caption{Lorentzian line shape}
\end{subfigure}
\begin{subfigure}[b]{0.45\linewidth}
	\centering
	\begin{tikzpicture}[scale=1.5]
		\draw[line width=0.5mm,red] plot [smooth] coordinates {(0,0.08) (1.2,0.2) (1.5,1) (1.7,0.04) (2,0.15) (3,0.1)};
		\draw[->,line width=0.4mm] (0,0) -- (3.2,0) node[pos=0.95, yshift=-8pt]{$\omega$};
		\draw[->,line width=0.4mm] (0,0) -- (0,1.2) node[pos=0.85, xshift=-8pt]{$T$};
	\end{tikzpicture}
	\caption{Fano-type line shape}
\end{subfigure}
\caption{When the frequency of incident waves varies, the transmittance $T$ (\textit{i.e.} the intensity of the transmitted field) will have peaks at certain frequencies. A Lorentzian line shape, which is a symmetric peak, is typically found in scattering problems. In this work, we study a setting that exhibits a Fano-type line shape, which is asymmetric and rapidly drops from $1$ to $0$.} \label{fig:lines}
	\end{figure}
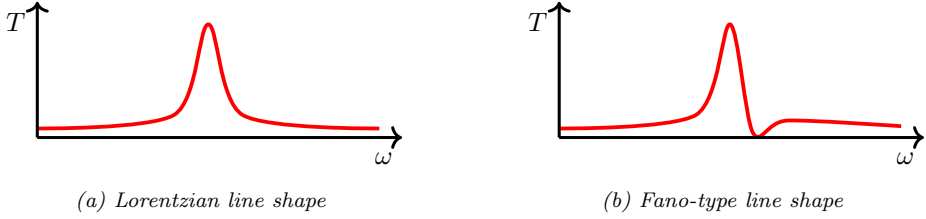

	\section{Metascreens} \label{sec:setup}
	We study a metascreen consisting of periodically repeated pairs of resonators, which are inclusions of a contrasting material surrounded by some background medium, as depicted in \Cref{fig:metascreen}. We will begin by presenting a capacitance matrix characterization of the band structure, similar to previous works \cite{ammari2020exceptional, ammari2017screen}. Thereafter, we will restrict our attention to the band structure in a neighbourhood of the origin and will compute the corresponding resonant frequencies explicitly. Using this analysis, we will compute the scattering matrix of the metascreen and demonstrate a Fano-type transmission asymmetry. In other words, for frequencies slightly below a critical frequency the transmission is close to 1, while for frequencies slightly above the critical point the transmission is close to 0. 
	
	We will study a structure composed of two resonators $D_1, D_2 \subset \R^3$ which are connected domains such that each boundary $\p D_i$ is Lipschitz continuous. The dimer $D$ is defined as $D= D_1\cup D_2$. We assume that the dimer is inversion symmetric in the sense that 
	\begin{equation} \label{inversionsymmetry}
	\P D_1 = D_2,
	\end{equation}
	where $\P:\R^3\to \R^3, \ \P(x) = -x$.
	
	We denote by $v_b$ the wave speed inside $D$ and by $v$ the wave speed in the surrounding material. We have that $v, v_b >0$ and, for simplicity, assume that the units are chosen such that $v=1$. Denoting the frequency of the waves by $\omega$, we define the wave numbers  as
	$$k = \frac{\omega}{v}, \quad k_b = \frac{\omega}{v_b}.$$	
	
	We will assume that there is a large material contrast between $D$ and the surrounding material, which is described by the contrast parameter $\delta$ as
	$$\delta \ll 1.$$	
		
	Next, we define the periodically repeated structure constituting the metascreen. We consider dimers in a two dimensional square lattice with period $L > 0$. The lattice is given by $\Lambda := L\Z^2$ with unit cell $Y=[-L/2,L/2]\times[-L/2,L/2]\times \R$. We assume that $D \Subset Y$ and  define the collection of periodically repeated resonators as
	$$\C = \bigcup_{(m_1,m_2)\in \Lambda} D + (m_1,m_2,0).$$
	This structure is depicted in \Cref{fig:metascreen}. The dual lattice $\Lambda^*$ of $\Lambda$ is defined as $\Lambda^* = (2\pi/L)\Lambda$.	The torus $Y^*:=\R^2/\Lambda^*$ is known as the \emph{Brillouin zone}. A function $f(y)$, $y \in \R^2$, is said to be $\alpha$-quasiperiodic, with \emph{quasiperiodicity} $\alpha\in Y^*$, if $e^{-\iu \alpha \cdot y}f(y)$ is periodic as a function of $y$. 
	
	We study the scattering problem 
	\begin{equation} \label{eq:scattering_quasi}
	\left\{
	\begin{array} {ll}
	\ds \Delta {u}+ \omega^2 {u}  = 0 & \text{in } \R^3 \setminus \C, \\[0.3em]
	\ds \Delta {u}+ \frac{\omega^2}{v_b^2}{u}  = 0 & \text{in } \C, \\
	\nm
	\ds  {u}|_{+} -{u}|_{-}  = 0  & \text{on } \partial \C, \\
	\nm
	\ds  \delta \frac{\partial {u}}{\partial \nu} \bigg|_{+} - \frac{\partial {u}}{\partial \nu} \bigg|_{-} = 0 & \text{on } \partial \C, \\
	\nm
	\ds u(x) - u^{\mathrm{in}}(x) & \text{satisfies the outgoing quasiperiodic} \\ & \text{radiation conditions as }  x_3 \rightarrow \pm\infty.
	\end{array}
	\right.
	\end{equation}
	Here, $u^{\mathrm{in}}$ is the incident field while  the subscripts $+$ and $-$ indicate the limits from outside and inside $D$, respectively.  We refer to, \textit{e.g.} \cite{bonnet1994guided, botten2013electromagnetic, ammari2020exceptional} for the definitions of the radiation conditions. We seek solutions $u$ which are $\alpha$-quasiperiodic in $(x_1,x_2)$ for some $\alpha$ in the sense that
	$$u(x + (m_1,m_2,0)) = e^{\iu\alpha\cdot (m_1,m_2)}u(x), \quad (m_1,m_2)\in \Lambda.$$
	If $u^{\mathrm{in}}$ is a plane wave $u^{\mathrm{in}}(x)= e^{\iu \k\cdot x}$, the quasiperiodicity $\alpha$ is specified by the wave vector $\k=\left(\begin{smallmatrix}
		k_1\\k_2\\k_3
	\end{smallmatrix}\right)$ as $\alpha = \left(\begin{smallmatrix}
	k_1 \\ k_2
\end{smallmatrix}\right)$ (see \emph{e.g.} \cite{ammari2018mathematical}). 
	
	If $u^{\mathrm{in}} = 0$, then frequencies $\omega$ with non-negative real part which are such that there is a nonzero solution $u$ for some $\alpha$ are known as \emph{(quasiperiodic) resonant frequencies} (or \emph{band functions} when viewed as functions of $\alpha$). The corresponding $\alpha$-quasiperiodic solutions $u$ are known as the \emph{(Bloch) eigenmodes} of the metascreen. A \emph{subwavelength} resonant frequency is a resonant frequency $\omega= \omega(\delta)$ which is continuous in $\delta$ and is such that $\omega(0) = 0$.
	
	We will study the scattering problem \eqref{eq:scattering_quasi} using a layer potential formulation.  For $\alpha\in Y^*$ such that $k \neq |\alpha+q|$ for all $q\in \Lambda^*$, the quasiperiodic Green's function $G^{\alpha,k}(x)$ is defined as the solution to
	$$\Delta G^{\alpha,k}(x) + k^2G^{\alpha,k}(x) = \sum_{(m_1,m_2) \in \Lambda} \delta(x-(m_1,m_2,0))e^{\iu \alpha\cdot (m_1,m_2)},$$
	along with the outgoing quasiperiodic radiation condition,  
	where $\delta(x)$ denotes the Dirac delta distribution. $G^{\alpha,k}$ can be written as
	\begin{equation}\label{eq:Gquasi}
	G^{\alpha,k}(x,y) := -\sum_{(m_1,m_2) \in \Lambda} \frac{e^{\iu k|x- (m_1,m_2,0)|}}{4\pi|x-(m_1,m_2,0)|}e^{\iu \alpha \cdot (m_1,m_2)}.
	\end{equation}
where the series converges uniformly for $x$ in compact sets of $\R^3$, $x\neq 0$ (see \emph{e.g} \cite[Section 2.12]{ammari2018mathematical}).
For $\varphi \in L^2(\p D)$ we define the quasiperiodic single layer potential $\S_D^{\alpha,k}$ by
	$$\S_D^{\alpha,k}[\varphi](x) := \int_{\partial D} G^{\alpha,k} (x-y) \varphi(y) \dx\sigma(y),\quad x\in \mathbb{R}^3.$$
	On the boundary of $D$, it satisfies the jump relations
	\begin{equation} \label{eq:jump1_quasi}
	\S_D^{\alpha,k}[\varphi]\big|_+ = \S_D^{\alpha,k}[\varphi]\big|_- \quad \mbox{on}~ \p D,
	\end{equation}
	and
	\begin{equation} \label{eq:jump2_quasi}
	\frac{\p}{\p\nu} \S_D^{\alpha,k}[\varphi] \Big|_{\pm} = \left( \pm \frac{1}{2} I +( \K_D^{-\alpha,k} )^*\right)[\varphi]\quad \mbox{on}~ \p D,
	\end{equation}
	where $(\K_D^{-\alpha,k})^*$ is the quasiperiodic Neumann--Poincar\'e operator, given by
	$$ (\K_D^{-\alpha, k} )^*[\varphi](x):= \mathrm{p.v.}\int_{\p D} \frac{\p}{\p\nu_x} G^{\alpha,k}(x-y) \varphi(y) \dx\sigma(y).$$
	
	We have the following result from \cite{ammari2020exceptional}.
	\begin{lemma} \label{lem:inv}
		The quasiperiodic single layer potential
		$\S_D^{\alpha,k} : L^2(\p D) \rightarrow H^1(\p D)$ is invertible if $k$ is small enough and $k \neq  |\alpha + q|$ for all $q \in \Lambda^*$.
	\end{lemma}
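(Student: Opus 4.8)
The plan is to establish invertibility by combining a Fredholm-index argument with an injectivity (uniqueness) argument. First I would write $\S_D^{\alpha,k} = \S_D^{k} + \bigl(\S_D^{\alpha,k} - \S_D^{k}\bigr)$, where $\S_D^{k}$ denotes the ordinary free-space single layer potential built from the fundamental solution $-e^{\iu k|x|}/(4\pi|x|)$. In three dimensions $\S_D^{k}:L^2(\p D)\to H^1(\p D)$ is invertible for all sufficiently small $k$ (a classical fact, proved at $k=0$ and extended by perturbation). The difference $\S_D^{\alpha,k}-\S_D^{k}$ is the integral operator whose kernel is the sum of $G^{\alpha,k}$ over the nonzero lattice translates; under the hypothesis $k\neq|\alpha+q|$ this series converges to a function that is smooth for $x,y$ in the (bounded, mutually disjoint) copies of $\p D$, so the difference is a smoothing, hence compact, operator from $L^2(\p D)$ to $H^1(\p D)$. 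Consequently $\S_D^{\alpha,k}$ is Fredholm of index zero, and it suffices to prove that it is injective.

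For injectivity, suppose $\S_D^{\alpha,k}[\varphi]=0$ and set $u:=\S_D^{\alpha,k}[\varphi]$, regarded as an $\alpha$-quasiperiodic function on $Y$. By construction $u$ satisfies $\Delta u+k^2u=0$ in $D$ and in $Y\setminus\closure{D}$, it is continuous across $\p D$ by the jump relation \eqref{eq:jump1_quasi}, it vanishes on $\p D$ by assumption, and it obeys the outgoing quasiperiodic radiation condition as $x_3\to\pm\infty$. In the interior, $u$ solves the homogeneous Dirichlet problem for $\Delta+k^2$ on $D$; since $k$ is small, $k^2$ lies below the first Dirichlet eigenvalue of $-\Delta$ on $D$, so $u\equiv0$ in $D$.

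It remains to show $u\equiv 0$ in the exterior cell $Y\setminus\closure{D}$. Expanding $u$ in the Floquet basis $e^{\iu(\alpha+q)\cdot x'}$, each mode behaves like $e^{\pm\iu\sqrt{k^2-|\alpha+q|^2}\,x_3}$ as $x_3\to\pm\infty$; the condition $k\neq|\alpha+q|$ keeps every channel off the diffraction cutoff, and for small $k$ every channel with $\alpha+q\neq0$ is evanescent and decays exponentially. A Rellich/flux identity---obtained by multiplying the Helmholtz equation by $\closure{u}$, integrating over the truncated cell, and letting the truncation tend to infinity---shows that the outgoing radiation condition forces the flux carried by any propagating channel to vanish, while the energy contributed by the evanescent channels is positive definite; together with the vanishing Dirichlet data on $\p D$ this yields $u\equiv0$ in the exterior. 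Finally, the normal-derivative jump relation \eqref{eq:jump2_quasi} gives $\varphi=\p_\nu u|_+-\p_\nu u|_-=0$, so $\S_D^{\alpha,k}$ is injective and, being Fredholm of index zero, invertible.

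The main obstacle is this exterior uniqueness step. The subtlety is precisely that guided/trapped modes---the mechanism behind the bound states in the continuum studied later in the paper---must be ruled out for the plain ($\delta$-free) single layer problem in the small-$k$, off-cutoff regime; this requires a careful treatment of the unbounded slab geometry $Y=[-L/2,L/2]^2\times\R$, controlling the Floquet expansion uniformly in $x_3$ and confirming that no admissible nonzero solution survives once the interior contribution is removed and all propagating flux is shown to vanish.
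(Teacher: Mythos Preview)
The paper does not prove this lemma; it is quoted from \cite{ammari2020exceptional}. Your Fredholm-plus-injectivity strategy is standard and the first half is correct: the decomposition $\S_D^{\alpha,k}=\S_D^{k}+(\S_D^{\alpha,k}-\S_D^{k})$ with a smoothing remainder gives index zero, and the interior uniqueness (small $k^2$ below the first Dirichlet eigenvalue of $D$) is fine.

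The exterior-uniqueness step, however, is only partially justified as written. The Rellich identity does kill the outgoing propagating amplitudes, leaving $u\in L^2(Y\setminus\overline{D})$, and the real part then gives $\int|\nabla u|^2=k^2\int|u|^2$. Your phrase ``the energy contributed by the evanescent channels is positive definite'' is, charitably read, a proxy for a Poincar\'e bound: extending $u$ by zero across $\partial D$ and Fourier-expanding in $(x_1,x_2)$ yields $\int|\nabla u|^2\ge\inf_{q\in\Lambda^*}|\alpha+q|^2\int|u|^2$, which forces $u\equiv0$ \emph{provided} $k^2<\inf_q|\alpha+q|^2$, i.e.\ in the fully evanescent regime. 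This is in fact the only regime in which the paper invokes the lemma (\Cref{thm:res_quasi} has $|\alpha|>c>0$ and $\omega\to0$). But once $|\alpha|<k$ the infimum equals $|\alpha|^2<k^2$ and the bound no longer closes the argument: the Floquet separation you rely on is valid only outside a slab containing $D$, so you cannot simply discard the already-killed propagating channel from the energy estimate in the near field. This is precisely the trapped-mode obstruction you flag in your last paragraph, and you have not resolved it---you have restated it.

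A cleaner route, consistent with the paper's own toolkit, bypasses injectivity altogether: for fixed $\alpha\notin\Lambda^*$ the expansion \eqref{eq:expansions}, $\S_D^{\alpha,k}=\S_D^{\alpha,0}+O(k^2)$, exhibits $\S_D^{\alpha,k}$ as a small perturbation of $\S_D^{\alpha,0}$, whose invertibility is a static fact (coercivity of the quasiperiodic Dirichlet energy for $\alpha\neq0$); a Neumann series then gives the result for $k$ small. For the complementary regime $\alpha=\omega\alpha_0$ with $|\alpha_0|<1$, the paper does not use \Cref{lem:inv} at all but instead appeals to \Cref{lem:holo}.
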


	\begin{figure}[tbh] 
		\begin{center}
			\includegraphics[width=0.95\linewidth]{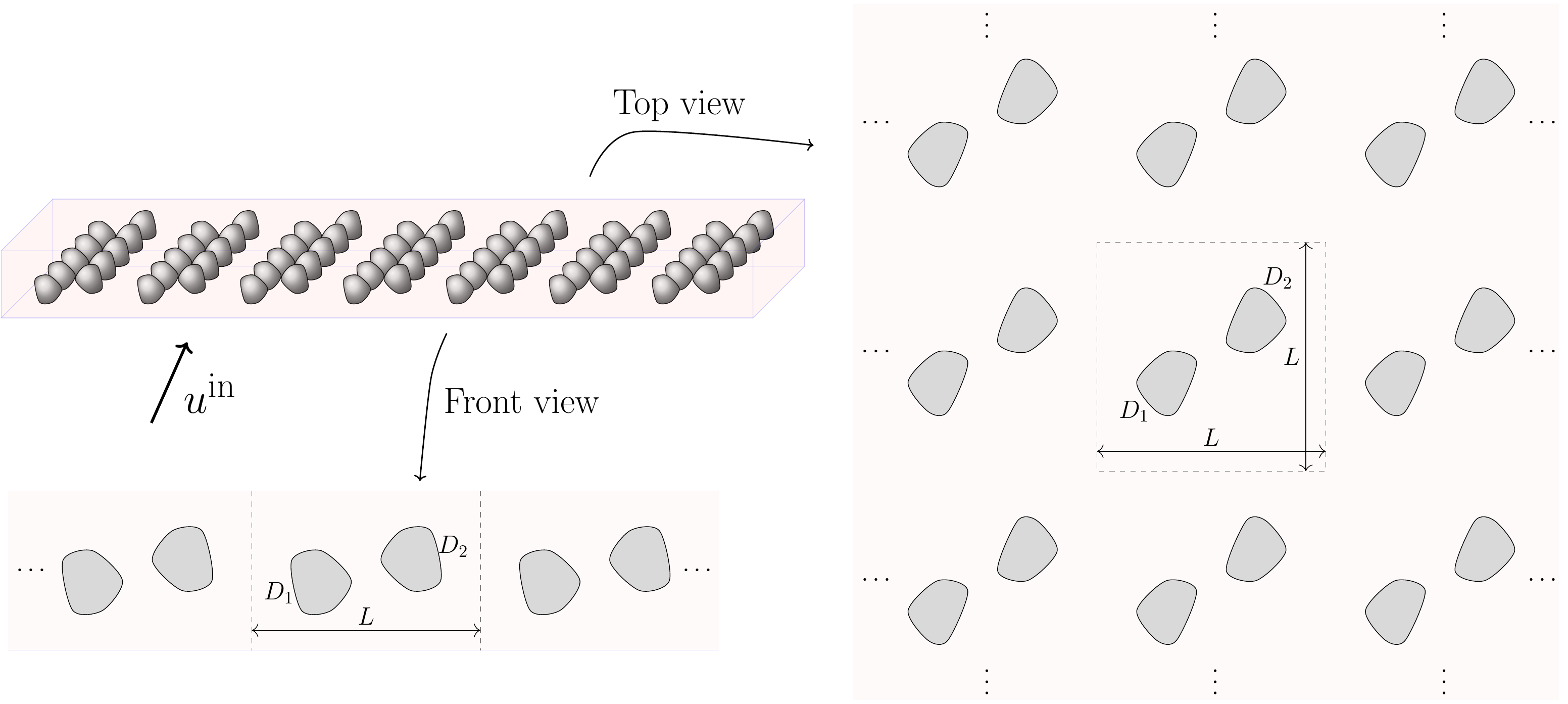}
		\end{center}
		\caption{In this work, we study a metascreen with an incident plane wave $u^{\mathrm{in}}$. The metascreen is composed of a $\mathcal{P}$-symmetric resonator dimer $D = D_1\cup D_2$ repeated periodically in a planar configuration.} \label{fig:metascreen}
	\end{figure}
	
	Recall that we are assuming $v=1$, so that $k=\omega$. The condition $\omega =  |\alpha+q|$ separates the $\omega\alpha$-plane into regimes with different radiation behaviour as $|x_3|\to \infty$. When $\omega$ is small, we have two regimes. In the regime where $\omega < \inf_{q\in \Lambda^*}|\alpha+q|$ (which is the unshaded region in \Cref{fig:band5000}) all waves are exponentially decaying as $|x_3|$ increases. The regime $|\alpha| < \omega < \inf_{q\in \Lambda^*\setminus \{0\} }|\alpha+q|$ (which is the shaded region in \Cref{fig:band5000}) corresponds to the first radiation continuum, where the waves typically behave as outgoing plane waves for large $x_3$.
	
	\subsection{Band structure} \label{sec:band}
	Here, we briefly mention the resonance problem $u^{\mathrm{in}} = 0$ in the regime when $\omega\to 0$ while $|\alpha| > c > 0$ for some $c$ independent of $\omega$ and $\delta$. In this regime, we have the  asymptotic expansions \cite{ammari2018mathematical}
	\begin{equation} \label{eq:expansions}
	\S_D^{\alpha,k} =\S_D^{\alpha,0} + O(k^2), \qquad	(\K_D^{-\alpha, k} )^* = (\K_D^{-\alpha, 0} )^* + O(k^2).
	\end{equation}
	Here, the error terms are stated with respect to the operator norms in the spaces $\B\big(L^2(\p D), H^1(\p D)\big)$ and $\B\big(L^2(\p D), L^2(\p D)\big)$, respectively, where $H^1(\p D)$ is the standard Sobolev space of functions that are square integrable and have a weak first derivative that is also square integrable. Furthermore, the error terms in \eqref{eq:expansions} are uniform for all $\alpha$ which satisfies $|\alpha| > c > 0$. For normed vector spaces $A$ and $B$, $\B(A,B)$ denotes the set of bounded linear operators from $A$ to $B$. We define the 
	\emph{quasiperiodic capacitance matrix} $C^\alpha =(C_{ij}^\alpha)_{i,j=1,2}$	 as
	\begin{equation}
	C_{ij}^\alpha=-\int_{\D_i} \psi_j^\alpha\de\sigma, \qquad \psi_j^\alpha=\left(\S_D^{\alpha,0}\right)^{-1}[\chi_{\p D_j}], 
	\end{equation}
	for $i,j=1,2$, where $\chi_{X}$ is used to denote the characteristic function of a set $X\subset\D$. From \textit{e.g.} \cite[Lemma 3.1]{ammari2020topologically} we know that $C^\alpha$ is a Hermitian matrix. The following result describes the subwavelength band structure \cite{ammari2020exceptional}.
	\begin{thm} \label{thm:res_quasi}
		Assume $|\alpha| > c > 0$ for some $c$ independent of $\delta$. As $\delta \rightarrow 0$, there are precisely two quasiperiodic resonant frequencies $\omega_1,\omega_2$ depending continuously on $\delta$ such that $\omega_i(0) = 0$. Moreover, they satisfy the asymptotic formula
		$$\omega_i = v_b \sqrt{\frac{\delta \lambda_i^\alpha}{|D_1|}} + O(\delta), \quad i = 1,2,$$
		where $|D_1|$ is the volume of a single resonator. Here, $\lambda_i^\alpha$ are the eigenvalues of the quasiperiodic capacitance matrix $C^{\alpha}$.
	\end{thm}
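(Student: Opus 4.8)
The plan is to recast the resonance problem ($u^{\mathrm{in}}=0$) as the characteristic-value problem for a boundary integral operator and then reduce it, to leading order in $\delta$, to the eigenvalue problem for the capacitance matrix $C^\alpha$. First I would represent an $\alpha$-quasiperiodic solution by single layer potentials on either side of $\p D$, namely $u = \S_D^{\alpha,k_b}[\phi]$ in $D$ and $u = \S_D^{\alpha,k}[\psi]$ outside, with $k=\omega$ and $k_b=\omega/v_b$. By the continuity of the single layer potential \eqref{eq:jump1_quasi} and the trace formula \eqref{eq:jump2_quasi}, the transmission conditions in \eqref{eq:scattering_quasi} become
\begin{equation*}
\A(\omega,\delta)\begin{pmatrix}\phi\\\psi\end{pmatrix}:=\begin{pmatrix}\S_D^{\alpha,k_b}&-\S_D^{\alpha,k}\\[0.3em]-\tfrac12 I+(\K_D^{-\alpha,k_b})^*&-\delta\bigl(\tfrac12 I+(\K_D^{-\alpha,k})^*\bigr)\end{pmatrix}\begin{pmatrix}\phi\\\psi\end{pmatrix}=0,
\end{equation*}
so that $\omega$ is a subwavelength resonant frequency exactly when $\A(\omega,\delta)$ has nontrivial kernel. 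Since $|\alpha|>c>0$, \Cref{lem:inv} makes $\S_D^{\alpha,0}$ invertible, and a short computation shows that $\ker\A(0,0)$ consists of the pairs $(\phi,\psi)$ with $\phi=\psi$ and $\bigl(-\tfrac12 I+(\K_D^{-\alpha,0})^*\bigr)[\phi]=0$; as $\S_D^{\alpha,0}[\phi]$ is then harmonic with constant boundary data on each component, this kernel is spanned by $(\psi_1^\alpha,\psi_1^\alpha)$ and $(\psi_2^\alpha,\psi_2^\alpha)$. Its two-dimensionality is the source of the ``precisely two'' resonances.

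The key analytic step is to extract the eigenvalue scaling from the otherwise-singular second row. Because $\S_D^{\alpha,k_b}[\phi]$ solves $\Delta u+k_b^2u=0$ in $D$, the divergence theorem gives the exact identity
$$\int_{\D_i}\Bigl(-\tfrac12 I+(\K_D^{-\alpha,k_b})^*\Bigr)[\phi]\de\sigma=\int_{\D_i}\frac{\p}{\p\nu}\S_D^{\alpha,k_b}[\phi]\Big|_-\de\sigma=-k_b^2\int_{D_i}\S_D^{\alpha,k_b}[\phi],$$
so integrating the second equation over $\p D_i$ produces the factor $k_b^2$ that will become the eigenvalue. I would then insert the small-frequency expansions \eqref{eq:expansions} together with $\S_D^{\alpha,k_b}=\S_D^{\alpha,0}+O(k_b^2)$, all uniform for $|\alpha|>c$, and write $\phi=\sum_j c_j\psi_j^\alpha+O(\omega^2)$ using the leading-order kernel.

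For the finite-dimensional reduction I use that $\S_D^{\alpha,0}[\psi_j^\alpha]\equiv\delta_{ij}$ on $D_i$, so that $\int_{D_i}\S_D^{\alpha,0}[\phi]=|D_1|c_i$ and the right-hand side above becomes $-k_b^2|D_1|c_i$. On the left, integrating $\delta\bigl(\tfrac12 I+(\K_D^{-\alpha,0})^*\bigr)[\psi]$ over $\p D_i$ and combining the jump relation with $\int_{\D_i}\bigl(-\tfrac12 I+(\K_D^{-\alpha,0})^*\bigr)[\,\cdot\,]=0$ yields $\int_{\D_i}\psi_j^\alpha=-C_{ij}^\alpha$, giving $-\delta\sum_j C_{ij}^\alpha c_j$. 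Equating the two sides produces, at leading order,
$$C^\alpha\mathbf{c}=\frac{k_b^2|D_1|}{\delta}\,\mathbf{c},$$
whence $k_b^2=\delta\lambda_i^\alpha/|D_1|$ and, via $\omega=v_bk_b$, the claimed formula to leading order. As $C^\alpha$ is Hermitian, its two real eigenvalues $\lambda_1^\alpha,\lambda_2^\alpha$ match the two-dimensional kernel found above.

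To make this rigorous I would write $\A(\omega,\delta)$ as $\A(0,0)$ plus a perturbation that is holomorphic in $\omega$ and small for $\omega,\delta$ small, and invoke the generalized Rouch\'e theorem (Gohberg--Sigal theory) to conclude that the total multiplicity of characteristic values near $\omega=0$ equals $\dim\ker\A(0,0)=2$, with continuous dependence on $\delta$; tracking the next-order terms then upgrades the leading order to the stated $O(\delta)$ error. The main obstacle is the uniform bookkeeping: one must ensure the expansions and the capacitance reduction hold uniformly in $\alpha$ on $|\alpha|>c$ (precisely where \Cref{lem:inv} secures invertibility of $\S_D^{\alpha,0}$), set up the Gohberg--Sigal count so that exactly two characteristic values are captured and none are spurious, and verify that the remainder after the reduction is genuinely $O(\delta)$ rather than merely $O(\sqrt\delta)$.
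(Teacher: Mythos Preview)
The paper does not prove this theorem in the text; it is quoted from \cite{ammari2020exceptional}. Your outline is essentially the standard argument used there and mirrors the methodology the present paper employs for the related \Cref{thm:res0}: layer-potential representation, Gohberg--Sigal count of characteristic values near $\omega=0$, and reduction via integration over $\partial D_i$ to the capacitance eigenvalue problem. One difference worth flagging: you represent the interior field by the \emph{quasiperiodic} single layer potential $\S_D^{\alpha,k_b}[\phi]$, whereas the paper (see \eqref{eq:rep_quasi}) and the cited reference use the \emph{free-space} potential $\S_D^{k_b}[\phi]$ inside $D$. Your choice is legitimate---$\S_D^{\alpha,k_b}[\phi]$ does satisfy $(\Delta+k_b^2)u=0$ in $D$ since $D\Subset Y$---but it introduces the extra condition $k_b\neq|\alpha+q|$ and couples the interior expansion to $\alpha$. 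The free-space interior representation decouples these, uses the unconditional invertibility of $\S_D$ in three dimensions, and gives direct access to the identities of \Cref{lem:trick_layer}, which makes the $O(\delta)$ bookkeeping you mention at the end cleaner. Otherwise your plan is correct.
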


	\begin{figure} 
		\begin{center}
			\includegraphics[width=0.6\linewidth]{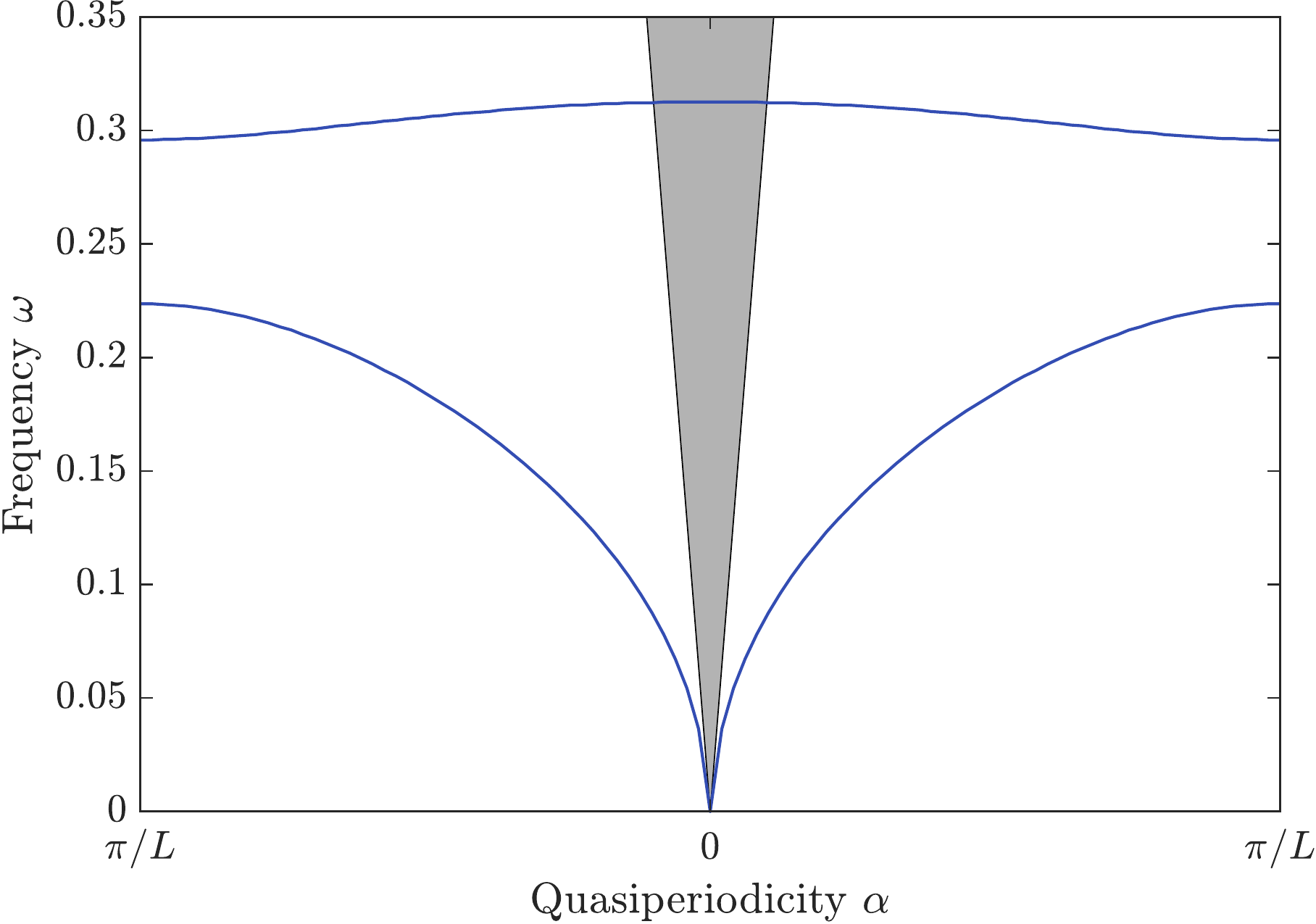}
		\end{center}
		\caption{The real part of the band structure, computed using the multipole expansion method. The shaded region is the first radiation continuum, defined by $|\alpha| < \omega < \inf_{q\in \Lambda^*\setminus \{0\} }|\alpha+q|$, while the unshaded region (apart from a neighbourhood of the origin) corresponds to the regime covered in \Cref{thm:res_quasi}. Here, we use the parameters $\delta=2\cdot 10^{-4}$ and $\theta = 0.05\pi$ (the same as in \Cref{fig:fanos3}).} \label{fig:band5000}
	\end{figure}
	We emphasize that \Cref{thm:res_quasi} holds also for a two-dimensional problem with a one-dimensional chain of resonators. In \Cref{fig:band5000} we plot the numerically computed subwavelength band structure (\textit{i.e.} $\omega_i$ as functions of $\alpha$) of such two-dimensional structure (here, we use the same parameters as in \Cref{fig:fanos3}, and we refer to \Cref{sec:num} for details on the setup and the method used). The shaded region shows the first radiation continuum, and will be the region of interest in the remainder of this work.

\section{Green's functions and capacitance matrix formulation}
In the analysis that follows, we will see that the Fano-type resonance occurs in a regime where both $\omega$ and $\alpha$ approach zero. In particular, we will study the case when the incident wave has a fixed direction of incidence and a frequency $\omega$ in the subwavelength regime. We define the wave vectors 
$$\k_+ = \begin{pmatrix}
	k_1 \\ k_2 \\  k_3
\end{pmatrix}, \quad \k_- = \begin{pmatrix}
	k_1 \\ k_2 \\  -k_3
\end{pmatrix}.$$
We will assume that the incident field  is a plane wave $u^{\mathrm{in}}(x) =
e^{\iu\k_+\cdot x}$. We consider subwavelength frequencies $\omega = O(\delta^{1/2})$ when $\delta \rightarrow 0$. In this limit, we assume that the incident direction is fixed, \ie{} that $\k_+$ is given by
\begin{equation} \label{eq:w}
	\k_+ = \omega\w, \ \text{for} \ \w = \begin{pmatrix}
	w_1 \\ w_2 \\  w_3
\end{pmatrix}\in\mathbb{R}^3, 
\end{equation}
where $\w$ is independent of $\omega$ and satisfies $|\w| = 1$ and $w_3 > 0$. We define
$$\alpha = \begin{pmatrix}k_1 \\ k_2 \end{pmatrix} = \omega \alpha_0 \in Y^*, \ \ \text{where} \ \ \alpha_0 = \begin{pmatrix} w_1 \\ w_2\end{pmatrix}.$$  In contrast to \Cref{sec:band}, this corresponds to the regime when $|\alpha| < k < \inf_{q\in \Lambda^*\setminus \{0\} }|\alpha+q|$. Related problems have been previously studied in \textit{e.g.} \cite{ammari2017screen,ammari2020exceptional}, and we begin by collecting some results on the Green's function in this setting.

	\subsection{Periodic Green's functions and capacitance matrix} \label{sec:C0}
	We begin by recalling some results from \cite{ammari2017screen,ammari2020exceptional}. When $|\alpha| < k < \inf_{q\in \Lambda^*\setminus \{0\} }|\alpha+q|$, the quasiperiodic Green's function admits the spectral representation
	\begin{equation} \label{eq:specrep} G^{\alpha,k}(x) = \frac{e^{\iu\alpha\cdot(x_1,x_2)}e^{\iu k_3|x_3|}}{2\iu k_3L^2} - \sum_{q\in \Lambda^*\setminus \{0\}}\frac{e^{\iu(\alpha+q)\cdot(x_1,x_2)}e^{-\sqrt{|\alpha+q|^2 - k^2}|x_3|}}{2L^2\sqrt{|\alpha+q|^2 - k^2} },\end{equation}
	where $k_3 = \sqrt{k^2 - |\alpha|^2}$. The series in \eqref{eq:specrep} converges uniformly for $x$ in compact sets of $\R^3$, $x\neq 0$,  and $|\alpha| < k < \inf_{q\in \Lambda^*\setminus \{0\} }|\alpha+q|$ (again, see \emph{e.g.} \cite{ammari2018mathematical}). In the case when $k = \alpha = 0$, we have
	\begin{equation} \label{eq:specrep_0}
	G^{0,0}(x) = \frac{|x_3|}{2L^2} - \sum_{q\in \Lambda^*\setminus \{0\}}\frac{e^{\iu q \cdot(x_1,x_2)}e^{-|q||x_3|}}{2L^2|q|}.
	\end{equation}
	Here  $G^{0,0}$ is called the \emph{periodic} Green's function \cite{ammari2017screen}. When $\omega \rightarrow 0$, we have
	\begin{equation} \label{eq:Gexp}
	G^{\omega\alpha_0,\omega}(x) = \frac{1}{2\iu \omega w_3 L^2} + G^{0,0}(x) + \frac{\alpha_0 \cdot (x_1,x_2)}{2w_3L^2} + \omega G_1^{\alpha_0} (x) + O(\omega^2).
	\end{equation}
	Here, $G_1^{\alpha_0} $ is a function independent of $\omega$, which can be written as \cite{ammari2017screen}
	$$
	G_1^{\alpha_0} (x) = \frac{\iu \left(w_3|x_3| + \alpha_0\cdot (x_1,x_2)\right)^2}{4w_3L^2} + \alpha_0 \cdot g_1(x),$$
	where $g_1(x)$ is a vector-valued function that is independent of $\alpha$ and $\omega$ and satisfies
	$$g_1(x_1,x_2,x_3) = g_1(x_1,x_2,-x_3), \qquad g_1(x_1,x_2,x_3) = - g_1(-x_1,-x_2,x_3).$$
	From \eqref{eq:Gexp} we in particular observe that the Green's function has a singularity of order $\omega^{-1}$.	We define the operators $\hat{\S}_D^{\alpha,k}: L^2(\p D) \rightarrow H^1(\p D)$ and $(\hat\K_D^{-\alpha,k})^*: L^2(\p D) \rightarrow L^2(\p D)$ as
	\begin{equation}
	\label{eq:Shat}\hat{\S}_D^{\alpha,k}[\varphi](x) = {\S}_D^{0,0}[\varphi](x) - \frac{\iu - \alpha\cdot(x_1,x_2)}{2\iu k_3 L^2} \int_{\p D}\varphi \dx \sigma - \int_{\p D}\frac{\alpha \cdot (y_1,y_2)}{2k_3L^2} \varphi(y)\dx \sigma (y),
	\end{equation}
	and
	$$(\hat\K_D^{-\alpha,k})^*[\varphi](x) = (\K_D^{0,0})^*[\varphi](x) +  \frac{\alpha \cdot (\nu_{x,1}, \nu_{x,2})}{2k_3L^2}\int_{\p D} \varphi \dx \sigma.$$
	Here, $\nu_x = (\nu_{x,1}, \nu_{x,2}, \nu_{x,3})$ denotes the outwards pointing normal of $D$ at $x$.
	Moreover, we define the operators $\S_1^{\alpha_0} : L^2(\p D) \rightarrow H^1(\p D)$ and  $(\K_{D,1}^{-\alpha_0})^*: L^2(\p D) \rightarrow L^2(\p D)$ as
	$$\S_1^{\alpha_0}[\varphi](x) := \int_{\partial D} G_1^{\alpha_0} (x-y) \varphi(y) \dx\sigma(y), \qquad (\K_{D,1}^{-\alpha_0})^*[\varphi](x):= \int_{\p D} \frac{\p}{\p\nu_x} G_1^{\alpha_0}(x-y) \varphi(y) \dx\sigma(y).$$
	We then have the asymptotic expansion \cite{ammari2020exceptional}
	\begin{equation} \label{eq:exp_hat}
	\S_D^{\omega\alpha_0,\omega} = \hat{\S}_D^{\omega\alpha_0,\omega} + \omega\S_1^{\alpha_0} + O(\omega^2), \qquad (\K_D^{-\omega\alpha_0,\omega})^* = (\hat\K_D^{-\omega\alpha_0,\omega})^* +  \omega(\K_{D,1}^{-\alpha_0})^* + O(\omega^2),
	\end{equation}
	as $\omega \rightarrow 0$, where the error terms are with respect to corresponding operator norms. We have the next three results from \cite{ammari2020exceptional}.
	\begin{lemma} \label{lem:intK}
		For any $\varphi\in L^2(\p D)$ we have, for $i=1,2$,
	$$\int_{\p D_i}\left(-\frac{1}{2}I+(\hat\K_D^{-\alpha,k})^*\right)[\varphi]\de\sigma=0, \qquad \int_{\p D_i}(\K_{D,1}^{-\alpha_0})^*[\varphi] = \frac{\iu|D_i|}{2w_3L^2}\int_{\p D}\varphi \de\sigma.$$
	\end{lemma}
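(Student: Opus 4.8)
The plan is to verify the two identities directly from the definitions of the operators $(\hat\K_D^{-\alpha,k})^*$ and $(\K_{D,1}^{-\alpha_0})^*$, exploiting the structure of the quasiperiodic Green's function together with the divergence theorem. For the first identity, recall from \eqref{eq:jump2_quasi} that $-\tfrac12 I + (\K_D^{-\alpha,k})^*$ is the interior normal derivative of $\S_D^{\alpha,k}[\varphi]$. The key observation is that $\S_D^{\alpha,k}[\varphi]$ satisfies $(\Delta + k^2)\S_D^{\alpha,k}[\varphi] = 0$ inside $D$, so integrating the interior normal derivative over $\p D_i$ and applying the divergence theorem on $D_i$ converts the boundary integral into a volume integral of $-k^2 \S_D^{\alpha,k}[\varphi]$ over $D_i$, which is $O(k^2)$. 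I would make this precise at the level of the leading-order operators: since $\hat\S_D^{\alpha,k}$ is built from $\S_D^{0,0}$ (plus terms that are constant or linear on $\p D$, whose interior extensions are harmonic), the associated potential is harmonic in $D_i$ to the relevant order, and hence the integral of its interior normal derivative over the closed surface $\p D_i$ vanishes exactly. This gives $\int_{\p D_i}\left(-\tfrac12 I + (\hat\K_D^{-\alpha,k})^*\right)[\varphi]\de\sigma = 0$.

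For the second identity, I would substitute the explicit formula for $G_1^{\alpha_0}$ into the definition of $(\K_{D,1}^{-\alpha_0})^*$ and compute $\int_{\p D_i}(\K_{D,1}^{-\alpha_0})^*[\varphi]\de\sigma$ by interchanging the order of integration (in $x$ over $\p D_i$ and in $y$ over $\p D$). For fixed $y$, the inner integral becomes $\int_{\p D_i}\frac{\p}{\p\nu_x}G_1^{\alpha_0}(x-y)\de\sigma(x)$, which by the divergence theorem equals $\int_{D_i}\Delta_x G_1^{\alpha_0}(x-y)\de x$. The decisive step is therefore to compute the Laplacian of $G_1^{\alpha_0}$. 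From the formula $G_1^{\alpha_0}(x) = \frac{\iu(w_3|x_3| + \alpha_0\cdot(x_1,x_2))^2}{4w_3L^2} + \alpha_0\cdot g_1(x)$, I expect the quadratic term to contribute a constant Laplacian equal to $\frac{\iu}{2w_3L^2}$ (since $\Delta(w_3|x_3|+\alpha_0\cdot(x_1,x_2))^2 = 2|\nabla(w_3|x_3|+\alpha_0\cdot(x_1,x_2))|^2 = 2(w_3^2 + |\alpha_0|^2) = 2$ away from $x_3 = 0$, using $|\w|=1$), while the $g_1$ term should be harmonic in $D_i$ since $g_1$ arises from the smooth part of the periodic Green's function. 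Integrating the constant $\frac{\iu}{2w_3L^2}$ over $D_i$ yields $\frac{\iu|D_i|}{2w_3L^2}$ times $\int_{\p D}\varphi\de\sigma$, matching the claim.

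The main obstacle is the careful treatment of the term $|x_3|$, which is not smooth across the plane $x_3 = 0$: its second derivative produces a Dirac mass $2\delta(x_3)$, and since $D_i\Subset Y$ may straddle this plane, one must check that this distributional contribution either vanishes or is correctly accounted for. I would argue that, because of the factor $(w_3|x_3| + \alpha_0\cdot(x_1,x_2))^2$, the singular contribution from $\p_{x_3}^2|x_3|^2$ is actually the regular constant computed above, while any genuine distributional term is controlled by the structure of the resonators; the symmetry relation $g_1(x_1,x_2,x_3) = g_1(x_1,x_2,-x_3)$ and the harmonicity of the periodic Green's function away from lattice points should ensure the $g_1$ contribution integrates to zero over the closed surface. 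Once the Laplacian computation is pinned down, both identities follow from the divergence theorem and Fubini, and the remaining steps are routine.
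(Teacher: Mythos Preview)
The paper does not prove this lemma here; it is quoted from \cite{ammari2020exceptional}, so there is no in-paper argument to compare against. Your strategy for the first identity is correct: $\S_D^{0,0}[\varphi]$ is harmonic in $D_i$, and the additional term $\frac{\alpha\cdot(\nu_{x,1},\nu_{x,2})}{2k_3L^2}\int_{\p D}\varphi\,\de\sigma$ integrates to zero over $\p D_i$ by the divergence theorem applied to the constant vector field $(\alpha,0)$.

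For the second identity there is a genuine gap. Your term-by-term computation goes wrong in two places that happen to cancel, but you do not show the cancellation. First, the cross term $2w_3|x_3|\,(\alpha_0\cdot(x_1,x_2))$ in the square contributes a surface singularity $\frac{\iu(\alpha_0\cdot(x_1,x_2))}{L^2}\delta(x_3)$ to the distributional Laplacian of the quadratic part; you flagged the $|x_3|$ issue but did not isolate this term. Second, your claim that $\alpha_0\cdot g_1$ is harmonic is false: it must satisfy $\Delta(\alpha_0\cdot g_1)=-\frac{\iu(\alpha_0\cdot(x_1,x_2))}{L^2}\delta(x_3)$ precisely to cancel the previous contribution. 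The clean fix is to work with $G_1^{\alpha_0}$ as a whole. Expanding $(\Delta+\omega^2)G^{\omega\alpha_0,\omega}=\sum_{m\in\Lambda}e^{\iu\omega\alpha_0\cdot m}\delta\big(x-(m,0)\big)$ to order $\omega$ gives the distributional identity
\[
\Delta G_1^{\alpha_0}(x)=\frac{\iu}{2w_3L^2}+\iu\sum_{m\in\Lambda\setminus\{0\}}(\alpha_0\cdot m)\,\delta\big(x-(m,0)\big),
\]
so $G_1^{\alpha_0}$ is in fact smooth with constant Laplacian $\frac{\iu}{2w_3L^2}$ away from nonzero lattice points (in particular across $\{x_3=0\}$, by elliptic regularity). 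Since $D\Subset Y$, for $x\in D_i$ and $y\in\p D$ the difference $x-y$ never meets a nonzero lattice point, and the divergence theorem then yields $\int_{\p D_i}\frac{\p}{\p\nu_x}G_1^{\alpha_0}(x-y)\,\de\sigma(x)=\frac{\iu|D_i|}{2w_3L^2}$ directly. Equivalently, one can bypass the pointwise computation by expanding the exact identity $\int_{\p D_i}\big(-\tfrac{1}{2}I+(\K_D^{-\omega\alpha_0,\omega})^*\big)[\varphi]\,\de\sigma=-\omega^2\int_{D_i}\S_D^{\omega\alpha_0,\omega}[\varphi]\,\de x$ in $\omega$ and reading off the $O(\omega)$ term using $\S_D^{\omega\alpha_0,\omega}[\varphi]=\frac{1}{2\iu\omega w_3L^2}\int_{\p D}\varphi\,\de\sigma+O(1)$.
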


\begin{lemma} \label{lem:ker}
	The dimension of $\ker \S_D^{0,0}$ is at most one. Further, if $\varphi \in L^2(\p D)$ is such that $\int_{\p D} \varphi \dx \sigma = 0$ and $\S_D^{0,0}[\varphi] = K\chi_{\p D}$ for some constant $K$, then $\varphi = 0$.
\end{lemma}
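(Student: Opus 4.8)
The plan is to translate both claims into uniqueness statements for the harmonic function $u := \S_D^{0,0}[\varphi]$, and to prove these by combining interior Dirichlet uniqueness with an energy identity in the unbounded periodic exterior region. By construction $u$ is harmonic in $Y\setminus\p D$, periodic in $(x_1,x_2)$, and continuous across $\p D$, while its normal derivative satisfies the jump relation $\p_\nu u|_\pm = (\pm\tfrac12 I + (\K_D^{0,0})^*)[\varphi]$, so that in particular $\varphi = \p_\nu u|_+ - \p_\nu u|_-$. The crucial input is the explicit form \eqref{eq:specrep_0}: since only the term $|x_3-y_3|/(2L^2)$ fails to decay as $|x_3|\to\infty$, one reads off that $u(x) = \frac{1}{2L^2}|x_3|\int_{\p D}\varphi\dx\sigma + (\text{bounded})$, and more precisely that when $\int_{\p D}\varphi\dx\sigma = 0$ the function $u$ tends to the constants $c_\pm = \mp\frac{1}{2L^2}\int_{\p D}y_3\,\varphi\dx\sigma$ as $x_3\to\pm\infty$, with $\p_{x_3}u$ decaying exponentially.

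First I would record the following key claim: \emph{if $\int_{\p D}\varphi\dx\sigma = 0$ and $u=\S_D^{0,0}[\varphi]$ is constant, say equal to $K$, on $\p D$, then $u\equiv 0$ and $\varphi = 0$.} For the interior, $u$ is harmonic in $D$ with $u=K$ on $\p D$, so $u\equiv K$ in $D$ by uniqueness of the Dirichlet problem. For the exterior, set $v := u-K$, which vanishes on $\p D$, is harmonic in the connected periodic exterior $Y\setminus\overline{D}$, and tends to $c_\pm - K$ at $\pm\infty$ with exponentially decaying $\p_{x_3}v$. Applying Green's identity on the truncated cell $(Y\setminus\overline{D})\cap\{|x_3|<R\}$, the contributions of the lateral faces cancel by periodicity, the contribution of $\p D$ vanishes because $v=0$ there, and the contributions of the faces $\{x_3=\pm R\}$ tend to zero as $R\to\infty$ since $v$ stays bounded while $\p_{x_3}v\to 0$ exponentially. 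Hence $\int_{Y\setminus\overline{D}}|\nabla v|^2 = 0$, so $v$ is constant on the connected exterior and, being zero on $\p D$, vanishes identically; thus $u\equiv K$ everywhere. Comparing the two limits $c_+ = c_- = K$ forces $\int_{\p D}y_3\,\varphi\dx\sigma = 0$ and hence $K = c_\pm = 0$, so $u\equiv 0$; the jump relation then gives $\varphi = \p_\nu u|_+ - \p_\nu u|_- = 0$.

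The second statement of the lemma is exactly this key claim, since the hypothesis $\S_D^{0,0}[\varphi]=K\chi_{\p D}$ says precisely that $u=K$ on $\p D$. For the first statement, the key claim with $K=0$ shows that $\ker\S_D^{0,0}\cap\{\varphi:\int_{\p D}\varphi\dx\sigma = 0\} = \{0\}$; equivalently, the linear functional $\varphi\mapsto\int_{\p D}\varphi\dx\sigma$ is injective on $\ker\S_D^{0,0}$. Since this functional takes values in the one-dimensional space $\mathbb{C}$, it follows that $\dim\ker\S_D^{0,0}\le 1$.

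I expect the main obstacle to be making the exterior energy argument rigorous on the unbounded periodic slab: one must justify, from the spectral representation \eqref{eq:specrep_0}, both the precise constant limits $c_\pm$ and the exponential decay of the transverse derivative, and then control the truncation so that the boundary terms on $\{x_3=\pm R\}$ genuinely vanish as $R\to\infty$. The remaining ingredients, namely interior Dirichlet uniqueness, connectedness of $Y\setminus\overline{D}$ (which holds since $D\Subset Y$), and cancellation of the periodic lateral faces, are routine.
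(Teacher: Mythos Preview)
The paper does not actually prove this lemma: it is one of three results quoted from \cite{ammari2020exceptional} without argument. Your proposal is therefore not being compared against a proof in the present paper, but rather supplies one where the paper gives only a citation.

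Your argument is correct and is the natural potential-theoretic route. The identification of the far-field constants $c_\pm = \mp\tfrac{1}{2L^2}\int_{\p D}y_3\varphi\,\mathrm{d}\sigma$ from \eqref{eq:specrep_0} is accurate, and the exterior energy identity on the truncated periodic slab goes through exactly as you describe once $\int_{\p D}\varphi\,\mathrm{d}\sigma=0$ kills the linear growth and leaves only exponentially decaying remainders. The deduction $c_+ = c_- = K$ together with $c_+ = -c_-$ forcing $K=0$ is clean, and recovering $\varphi$ from the normal-derivative jump is standard. Your reduction of the first claim to the second, via injectivity of the scalar functional $\varphi\mapsto\int_{\p D}\varphi\,\mathrm{d}\sigma$ on $\ker\S_D^{0,0}$, is an efficient way to obtain the dimension bound.

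The obstacle you flag is the right one: the justification of Green's identity on the truncated slab for $\varphi\in L^2(\p D)$ needs the usual $H^1_{\mathrm{loc}}$ regularity of single-layer potentials with $L^2$ densities, together with uniform control of the spectral series \eqref{eq:specrep_0} and its termwise $x_3$-derivative for $|x_3|$ large. Both are routine given the exponential factors $e^{-|q||x_3|}$ for $q\neq 0$.
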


\begin{lemma} \label{lem:holo}
	For any $\alpha_0 \in Y^*$ with $|\alpha_0| < 1$,  $\left(\hat{\S}_D^{\omega\alpha_0,\omega}\right)^{-1}$ and $\left({\S}_D^{\omega\alpha_0,\omega}\right)^{-1}$ are   holomorphic operator-valued functions of $\omega$ in a neighbourhood of $\omega = 0$.
\end{lemma}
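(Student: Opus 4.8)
The plan is to strip off the $\omega^{-1}$ singularity of $\hat\S_D^{\omega\alpha_0,\omega}$, which turns out to be a single rank-one operator, and then invert by a rank-one update. Substituting $\alpha=\omega\alpha_0$, $k=\omega$ and $k_3=\sqrt{k^2-|\alpha|^2}=\omega w_3$ into \eqref{eq:Shat}, the factor $\iu/(2\iu k_3L^2)$ produces the only negative power of $\omega$, while every remaining term is independent of $\omega$. Collecting them gives the \emph{exact} (not merely asymptotic) decomposition
\[
\hat\S_D^{\omega\alpha_0,\omega}=\frac1\omega A+B,\qquad A[\varphi]=-\frac{1}{2w_3L^2}\left(\int_{\p D}\varphi\de\sigma\right)\chi_{\p D},
\]
where $A$ has rank one and $B$ is $\omega$-independent, equal to $\S_D^{0,0}$ plus the two rank-one corrections carrying the $\alpha_0\cdot(x_1,x_2)$ terms. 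In particular $B$ is a finite-rank perturbation of $\S_D^{0,0}$, which by \Cref{lem:ker} is invertible modulo a kernel of dimension at most one; hence $B$ is Fredholm of index zero.

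Writing $A[\varphi]=c\,\ell(\varphi)\chi_{\p D}$ with $c=-1/(2w_3L^2)$ and $\ell(\psi)=\int_{\p D}\psi\de\sigma$, the mechanism is cleanest when $B$ is invertible: then $\hat\S_D^{\omega\alpha_0,\omega}=B\bigl(I+\tfrac1\omega B^{-1}A\bigr)$ with $B^{-1}A$ again rank one, and the Sherman--Morrison formula gives
\[
\left(\hat\S_D^{\omega\alpha_0,\omega}\right)^{-1}=\left(I-\frac{c\,\ell(\,\cdot\,)\,B^{-1}\chi_{\p D}}{\omega+c\,\ell\!\left(B^{-1}\chi_{\p D}\right)}\right)B^{-1}.
\]
This is rational in $\omega$ with a single candidate pole at the zero of the scalar denominator, hence holomorphic near $\omega=0$ precisely when $\ell(B^{-1}\chi_{\p D})\neq0$; I would deduce this nonvanishing from \Cref{lem:intK} and the Hermitian positive-definiteness of the capacitance matrix, which force the underlying capacitance-type scalar to be nonzero.

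The main obstacle is to handle the case in which $\S_D^{0,0}$—and therefore $B$—fails to be invertible, which by \Cref{lem:ker} can occur only through an at most one-dimensional kernel spanned by some $\varphi_0$ with nonzero mean $\ell(\varphi_0)\neq0$. Here I would not divide by $B$ but instead reduce relative to $\S_D^{0,0}$, writing $\hat\S_D^{\omega\alpha_0,\omega}=\S_D^{0,0}+M(\omega)$ with $M(\omega)=\tfrac1\omega A+F_0$ finite rank (range in $\langle\chi_{\p D},\,\alpha_0\cdot(x_1,x_2)\rangle$) and meromorphic with a simple pole. A Schur-complement/bordered-system argument then reduces $\hat\S_D^{\omega\alpha_0,\omega}\varphi=h$ to a finite linear system in which $\ell(\varphi)$, the $\ker\S_D^{0,0}$-component of $\varphi$, and the components of $h$ transverse to $\operatorname{ran}\S_D^{0,0}$ appear as auxiliary unknowns, and whose coefficient matrix is holomorphic in $\omega$ except for a single $\omega^{-1}$ entry coming from $A$. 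The point is that this residue is nondegenerate exactly along the degenerate direction, since $A\varphi_0=c\,\ell(\varphi_0)\chi_{\p D}\neq0$; the inversion symmetry $\P D_1=D_2$ (under which $G^{0,0}$, hence $\S_D^{0,0}$, is even) splits the system into parity sectors and, with \Cref{lem:intK} and capacitance positivity, pins down the remaining scalars. Checking that the reduced determinant vanishes to exactly the order that cancels the $\omega^{-1}$ factors—so that $\varphi$ depends holomorphically on $(\omega,h)$—is the crux.

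Finally, the statement for ${\S}_D^{\omega\alpha_0,\omega}$ follows by perturbation from \eqref{eq:exp_hat}. The difference $E(\omega):={\S}_D^{\omega\alpha_0,\omega}-\hat\S_D^{\omega\alpha_0,\omega}=\omega\S_1^{\alpha_0}+O(\omega^2)$ is holomorphic and vanishes at $\omega=0$. Factoring ${\S}_D^{\omega\alpha_0,\omega}=\hat\S_D^{\omega\alpha_0,\omega}\bigl(I+(\hat\S_D^{\omega\alpha_0,\omega})^{-1}E(\omega)\bigr)$ and using that $(\hat\S_D^{\omega\alpha_0,\omega})^{-1}$ is holomorphic, hence bounded, near the origin while $E(\omega)=O(\omega)$, the operator $(\hat\S_D^{\omega\alpha_0,\omega})^{-1}E(\omega)$ has norm below $1$ for small $\omega$, so $I+(\hat\S_D^{\omega\alpha_0,\omega})^{-1}E(\omega)$ is invertible with inverse given by a convergent, and therefore holomorphic, Neumann series. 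Thus $({\S}_D^{\omega\alpha_0,\omega})^{-1}=\bigl(I+(\hat\S_D^{\omega\alpha_0,\omega})^{-1}E(\omega)\bigr)^{-1}(\hat\S_D^{\omega\alpha_0,\omega})^{-1}$ is holomorphic in a neighbourhood of $\omega=0$, as required.
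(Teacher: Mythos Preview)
The paper does not prove this lemma; it is quoted from \cite{ammari2020exceptional}, so there is no in-paper argument to compare against. Your exact decomposition $\hat\S_D^{\omega\alpha_0,\omega}=\omega^{-1}A+B$ with $A$ rank one and $B$ independent of $\omega$ is correct, the Sherman--Morrison inversion is the natural mechanism, and the closing Neumann-series step transferring holomorphy from $(\hat\S_D^{\omega\alpha_0,\omega})^{-1}$ to $(\S_D^{\omega\alpha_0,\omega})^{-1}$ via \eqref{eq:exp_hat} is clean and correct.

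There is one concrete error and one genuine gap. The error: the nonvanishing $\ell(B^{-1}\chi_{\p D})\neq0$ does \emph{not} follow from \Cref{lem:intK}, which concerns $(\hat\K_D^{-\alpha,k})^*$ rather than the single layer potential, nor from positive-definiteness of a capacitance matrix (indeed $C^0$ is only positive \emph{semi}-definite by \Cref{lem:cap0}). The tool you actually need is \Cref{lem:ker}: if $\psi:=B^{-1}\chi_{\p D}$ satisfied $\int_{\p D}\psi\de\sigma=0$, then in $B\psi$ the $\alpha_0\cdot(x_1,x_2)$ term drops out and the remaining $\alpha_0$-correction is a constant, so $\S_D^{0,0}[\psi]=K\chi_{\p D}$ for some $K$; \Cref{lem:ker} then forces $\psi=0$, contradicting $B\psi=\chi_{\p D}$. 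The gap: your non-invertible case is only a strategy. You correctly isolate the key facts---$\ell(\varphi_0)\neq0$ so that $A\varphi_0\neq0$, and constants lie outside $\mathrm{ran}\,\S_D^{0,0}|_{L^2_0}$---but the bordered determinant showing the $\omega^{-1}$ contributions cancel is never computed, and you again invoke \Cref{lem:intK} and ``capacitance positivity'' where \Cref{lem:ker} (and the statement just after it, that $\S_D^{0,0}:L^2_0(\p D)\to\mathrm{ran}$ misses the constants) are what is needed. Note also that ``$\S_D^{0,0}$ non-invertible'' and ``$B$ non-invertible'' are not equivalent; the case split should be on $B$.
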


Let $L^2_0(\p D)$ be the mean-zero space defined as $$L^2_0(\p D) = \left\{f \in L^2(\p D) \ \left| \ \int_{\p D} f \dx \sigma = 0 \right. \right\}.$$
Then $\S_D^{0,0}$ is invertible from $L^2_0(\p D)$ onto its image, which does not contain the constant functions.

We will now define the analogous capacitance coefficients in the periodic setting. Since $({\S}_D^{\omega\alpha_0,\omega})^{-1}$ is a holomorphic function of $\omega$ we have, as $\omega \to 0$,
	$$
	\left({\S}_D^{\omega\alpha_0,\omega}\right)^{-1} = \S_0^{\alpha_0} + \omega \S_{-1}^{\alpha_0} +  O(\omega^2),$$
	with respect to the operator norm in $\B\big(H^1(\p D), L^2(\p D)\big)$, for some operators $\S_0^{\alpha_0}$ and $\S_{-1}^{\alpha_0}$ which are independent of $\omega$. For  $\alpha_0$ with $|\alpha_0| < 1$, we let
	\begin{equation} \label{eq:psi0}
		\psi_i^{0} = \S_0^{\alpha_0}[\chi_{\p D_i}], \qquad {\psi}_i^{1,\alpha_0} = \S_{-1}^{\alpha_0}[\chi_{\p D_i}],
	\end{equation}
	and then define the periodic capacitance coefficients as
	\begin{equation}
	C_{ij}^{0}=-\int_{\D_i} \psi_j^{0}\de\sigma, \quad i,j = 1,2.
	\end{equation}
	We call the matrix $C^{0}  = (C_{ij}^{0})$ the \emph{periodic capacitance matrix}. Although the definition of the periodic capacitance matrix depends on $\alpha_0$, we will later see that $\psi_j^{0}$ and $C^{0}$ are independent of $\alpha_0$ in the current setting. First, we have from \cite{ammari2020exceptional} the following result concerning the periodic capacitance coefficients. 
	\begin{lemma} \label{lem:cap0}
		The periodic capacitance matrix $C^{0}$ is a real matrix given by		
		\begin{equation*}
		C^{0} = C_{11}^{0}\begin{pmatrix} 1 & -1 \\ -1 & 1\end{pmatrix}.
		\end{equation*}
		\end{lemma}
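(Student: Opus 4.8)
The plan is to reduce everything to a clean characterization of the leading-order densities $\psi_i^0 = \S_0^{\alpha_0}[\chi_{\p D_i}]$ and then read off both the algebraic structure and the reality of $C^0$ from two symmetries of the periodic Green's function $G^{0,0}$, namely that it is even and real-valued. Throughout I write $(\P^*\varphi)(x):=\varphi(\P x)=\varphi(-x)$ for the pullback by the inversion $\P$.

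First I would extract $\psi_i^0$ from the expansion of $\S_D^{\omega\alpha_0,\omega}$. Writing $\phi_i^\omega := (\S_D^{\omega\alpha_0,\omega})^{-1}[\chi_{\p D_i}] = \psi_i^0 + \omega\psi_i^1 + O(\omega^2)$, which is legitimate by \Cref{lem:holo}, I would substitute the expansions \eqref{eq:exp_hat} and \eqref{eq:Shat} into $\S_D^{\omega\alpha_0,\omega}[\phi_i^\omega] = \chi_{\p D_i}$. Since $k=\omega$, $\alpha=\omega\alpha_0$ and $|\w|=1$ give $k_3=\omega w_3$, the operator $\S_D^{\omega\alpha_0,\omega}$ has an $\omega^{-1}$ singularity whose coefficient sends $\varphi$ to the $x$-constant $-\tfrac{1}{2w_3L^2}\int_{\p D}\varphi\de\sigma$, while its $\omega^0$ coefficient sends $\varphi$ to $\S_D^{0,0}[\varphi]$ plus a constant in $x$ built from $\int_{\p D}\varphi\de\sigma$ and $\int_{\p D}(y_1,y_2)\varphi\de\sigma$. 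Matching powers of $\omega$, the order $\omega^{-1}$ balance forces $\int_{\p D}\psi_i^0\de\sigma=0$, and the order $\omega^0$ balance then reduces to $\S_D^{0,0}[\psi_i^0]=\chi_{\p D_i}+c_i$ for some constant $c_i$, because the remaining contributions (from $\psi_i^1$ and the $\alpha_0$-dependent terms in \eqref{eq:Shat}) are all constant in $x$. Together with $\int_{\p D}\psi_i^0\de\sigma=0$, \Cref{lem:ker} makes the pair $(\psi_i^0,c_i)$ the unique solution of this system, which in particular shows $\psi_i^0$ is independent of $\alpha_0$. This is the step I expect to be the main obstacle: keeping the bookkeeping of the $\omega^{-1}$ and $\omega^0$ terms honest and correctly pinning down $c_i$ through the mean-zero constraint.

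With this characterization the remaining claims are short. For reality, note that pairing $q$ with $-q$ in \eqref{eq:specrep_0} shows $G^{0,0}$ is real-valued, so $\S_D^{0,0}$ and $\chi_{\p D_i}$ are real; conjugating the defining system and invoking its uniqueness forces $\psi_i^0$ and $c_i$ to be real, whence each $C_{ij}^0=-\int_{\D_i}\psi_j^0\de\sigma$ is real. For the zero row sums, applying $\S_D^{0,0}$ to $\psi_1^0+\psi_2^0$ gives $\chi_{\p D_1}+\chi_{\p D_2}+(c_1+c_2)=\chi_{\p D}+(c_1+c_2)$, which is constant on $\p D$ because $\chi_{\p D}\equiv 1$ there, while $\int_{\p D}(\psi_1^0+\psi_2^0)\de\sigma=0$; \Cref{lem:ker} then forces $\psi_2^0=-\psi_1^0$, so that $C_{i2}^0=-C_{i1}^0$ for $i=1,2$.

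Finally, equality of the diagonal entries comes from the inversion symmetry $\P\p D_1=\p D_2$. Relabelling $q\mapsto -q$ in \eqref{eq:specrep_0} shows $G^{0,0}$ is even, so a change of variables gives that $\S_D^{0,0}$ commutes with $\P^*$; moreover $\P^*\chi_{\p D_1}=\chi_{\p D_2}$. Applying $\P^*$ to $\S_D^{0,0}[\psi_1^0]=\chi_{\p D_1}+c_1$ shows that $\P^*\psi_1^0$ solves the defining system for $\psi_2^0$, so uniqueness yields $\P^*\psi_1^0=\psi_2^0$. Changing variables $x\mapsto -x$ in $C_{22}^0=-\int_{\D_2}\psi_2^0\de\sigma$ then gives $C_{22}^0=-\int_{\D_1}\psi_1^0\de\sigma=C_{11}^0$. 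Combining $C_{i2}^0=-C_{i1}^0$ with $C_{11}^0=C_{22}^0$ produces $C^0=C_{11}^0\left(\begin{smallmatrix}1 & -1\\ -1 & 1\end{smallmatrix}\right)$, as claimed.
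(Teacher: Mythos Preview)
Your argument is correct and follows essentially the same route as the paper (which cites \cite{ammari2020exceptional} for this lemma but redoes the key steps in the proof of \Cref{lem:C1}(i)): expand $\S_D^{\omega\alpha_0,\omega}$ to get the mean-zero constraint \eqref{eq:-1th} and the characterization $\S_D^{0,0}[\psi_i^0]=\chi_{\p D_i}+\text{const}$ from \eqref{eq:0th}, then invoke \Cref{lem:ker} and the evenness of $G^{0,0}$ to obtain $\psi_1^0+\psi_2^0=0$ and $\P^*\psi_1^0=\psi_2^0$. Your direct conjugation argument for reality, using that pairing $q\leftrightarrow -q$ in \eqref{eq:specrep_0} makes $G^{0,0}$ real, is a clean addition that the paper leaves implicit.
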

	In fact, from \cite{ammari2020exceptional} we have that $\psi^0_1 = -\psi_2^0$. We also define the ``higher-order'' coefficients
	$$C_{ij}^{1,\alpha_0} = -\int_{\D_i} {\psi}_j^{1,\alpha_0} \dx \sigma, \quad  \mathbf{c}_i=\int_{\D} y\psi_i^{{0}}(y)\de\sigma(y), \quad i,j = 1,2.$$
	Then $\c_1 = - \c_2$ and we write the vector $\c_1$ as 
	\begin{equation}\label{eq:c1}
		\c_1 = \begin{pmatrix}c_1\\c_2\\c_3\end{pmatrix}.
	\end{equation} 
	We then have a result to describe the capacitance coefficients (which is a development of a similar symmetry result proved in \cite[Lemma 3.17]{ammari2020exceptional}).
	\begin{lemma} \label{lem:C1}
	It holds that
	\begin{itemize}
		\item[(i)]$\psi_j^0$, and consequently $C_{ij}^0$ and $\mathbf{c}_j$, are independent of $\alpha_0$. 
		\item[(ii)]
		$\ds C^{1,\alpha_0} = -\frac{\iu w_3 L^2}{2} \begin{pmatrix} 1 & 1 \\ 1 & 1\end{pmatrix} + \iu  (\alpha_0,0)\cdot\c_1\begin{pmatrix} 0 & 1 \\ -1 & 0\end{pmatrix} -\frac{\iu w_3 c_3^2}{2L^2} \begin{pmatrix} 1 & -1 \\ -1 & 1\end{pmatrix} +  O(\omega),\\[0.3em]$
		where $w_3$ and $c_3$ are defined in \eqref{eq:w} and \eqref{eq:c1}, respectively.
	\end{itemize}
	\end{lemma}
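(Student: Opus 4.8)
The whole argument runs through the matched small-$\omega$ expansion of the defining relation $\S_D^{\omega\alpha_0,\omega}[\psi_j^\omega]=\chi_{\p D_j}$, where $\psi_j^\omega:=(\S_D^{\omega\alpha_0,\omega})^{-1}[\chi_{\p D_j}]=\psi_j^0+\omega\psi_j^{1,\alpha_0}+O(\omega^2)$ is holomorphic by \Cref{lem:holo}. Using \eqref{eq:Gexp} I would write $\S_D^{\omega\alpha_0,\omega}=\tfrac1\omega A_{-1}+A_0^{\alpha_0}+\omega\S_1^{\alpha_0}+O(\omega^2)$, where $A_{-1}[\varphi]=\tfrac{1}{2\iu w_3L^2}\big(\int_{\p D}\varphi\de\sigma\big)\chi_{\p D}$ is the rank-one singular part onto the constants and $A_0^{\alpha_0}[\varphi]=\S_D^{0,0}[\varphi]+\tfrac{\alpha_0\cdot(x_1,x_2)}{2w_3L^2}\int_{\p D}\varphi\de\sigma-\tfrac{1}{2w_3L^2}\int_{\p D}(\alpha_0\cdot(y_1,y_2))\varphi\de\sigma$. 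The $\omega^{-1}$ balance forces $\int_{\p D}\psi_j^0\de\sigma=0$, and the $\omega^0$ balance gives $\S_D^{0,0}[\psi_j^0]=\chi_{\p D_j}+\kappa_j\chi_{\p D}$ for a scalar $\kappa_j=(\iu m_j+(\alpha_0,0)\cdot\c_j)/(2w_3L^2)$, with $m_j:=\int_{\p D}\psi_j^{1,\alpha_0}\de\sigma$. For (i), this characterization together with the mean-zero condition involves only the $\alpha_0$-independent data $\S_D^{0,0}$ and $\chi_{\p D_j}$, and by \Cref{lem:ker} (applied to the difference of two solutions) it determines $\psi_j^0$ uniquely; hence $\psi_j^0$, and therefore $C_{ij}^0$ and $\c_j$, are independent of $\alpha_0$.

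For (ii) I would evaluate $C_{ij}^{1,\alpha_0}=-\int_{\p D_i}\psi_j^{1,\alpha_0}\de\sigma$ by substituting $\chi_{\p D_i}=\S_D^{0,0}[\psi_i^0]-\kappa_i\chi_{\p D}$ and using that $\S_D^{0,0}$ is symmetric for the real pairing $\langle f,g\rangle:=\int_{\p D}fg\,\de\sigma$ (its kernel $G^{0,0}$ is real and even). Pairing the $\omega^1$ balance, which reads $A_0^{\alpha_0}[\psi_j^{1,\alpha_0}]+\S_1^{\alpha_0}[\psi_j^0]\in\operatorname{span}\chi_{\p D}$, against the mean-zero density $\psi_i^0$ removes the unknown constant, and after inserting $\kappa_i$ I expect to arrive at
\[C_{ij}^{1,\alpha_0}=\langle\S_1^{\alpha_0}[\psi_j^0],\psi_i^0\rangle+\frac{\iu\,m_im_j+2\,a_im_j}{2w_3L^2},\qquad a_i:=(\alpha_0,0)\cdot\c_i.\]
This cleanly separates the problem into a traceless double-integral piece and a monopole piece carried by the total masses $m_j$.

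The double integral is extracted from the explicit kernel $G_1^{\alpha_0}$. First I would show $\psi_1^0$ is odd: since $\P D=D$ and $G^{0,0}$ is even, $\S_D^{0,0}$ commutes with the pullback by $\P$, so the characterization forces $\psi_1^0\circ\P=\psi_2^0=-\psi_1^0$. Expanding $\big(w_3|x_3-y_3|+\alpha_0\cdot(x_1-y_1,x_2-y_2)\big)^2$ and integrating against $\psi_j^0(y)\psi_i^0(x)$, every monomial carrying a factor $\int_{\p D}\psi^0\de\sigma=0$ drops; the cross term $2w_3|x_3-y_3|\,\alpha_0\cdot(x-y)$ and the entire $\alpha_0\cdot g_1$ term are odd on $\p D\times\p D$ under $(x,y)\mapsto(-x,-y)$ (using $\psi_1^0$ odd and $g_1(-x)=-g_1(x)$, the latter from the stated parities of $g_1$) and integrate to zero. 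What survives is $\langle\S_1^{\alpha_0}[\psi_j^0],\psi_i^0\rangle=-\tfrac{\iu w_3(\c_i)_3(\c_j)_3}{2L^2}-\tfrac{\iu a_ia_j}{2w_3L^2}$, where the first term, via $(\c_2)_3=-c_3$, is exactly $-\tfrac{\iu w_3c_3^2}{2L^2}\left(\begin{smallmatrix}1&-1\\-1&1\end{smallmatrix}\right)$ and the second is a residual term that must later cancel.

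The hardest ingredient is the monopole masses $m_j$, which equal the column sums $-(C_{1j}^{1,\alpha_0}+C_{2j}^{1,\alpha_0})$ and so cannot be read off from the expansion — projecting the defining identity onto $\chi_{\p D}$ is, one checks, identically satisfied. I would instead obtain them by a far-field/flux argument: for $u_j:=\S_D^{\omega\alpha_0,\omega}[\psi_j^\omega]$ one has $(\Delta+\omega^2)u_j=0$ off $\p D$ with normal-derivative jump $\psi_j^\omega$, so integrating over a truncated unit cell equates $\int_{\p D}\psi_j^\omega\de\sigma$ with the flux of the single propagating mode of \eqref{eq:specrep} through $x_3=\pm\infty$ (plus $O(\omega)$ quasiperiodic side-wall terms); expanding the radiated amplitude $\int_{\p D}e^{-\iu\k_+\cdot y}\psi_j^\omega\de\sigma$ in $\omega$ should give $m_j=\iu\big(w_3L^2+a_j\big)$. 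Substituting this into the displayed formula, the $\tfrac{\iu a_ia_j}{2w_3L^2}$ terms cancel the residual double-integral term, the $m_im_j$ contribution collapses to $-\tfrac{\iu w_3L^2}{2}\left(\begin{smallmatrix}1&1\\1&1\end{smallmatrix}\right)$, and the remaining $a_i-a_j$ combination (using $a_2=-a_1$) gives $\iu\,(\alpha_0,0)\cdot\c_1\left(\begin{smallmatrix}0&1\\-1&0\end{smallmatrix}\right)$, together establishing the claim up to $O(\omega)$. I expect the flux computation — in particular handling the quasiperiodic side walls and the propagating mode uniformly as $\omega\to0$ — to be the main obstacle, while the parity-induced vanishing in the double integral is the key simplification that makes the rest routine.
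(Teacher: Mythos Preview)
Your overall strategy is sound and the final computations check out, but the route differs from the paper's in one important place, and the step you flag as ``the main obstacle'' is in fact avoidable.

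For (i), your argument is actually cleaner than the paper's. You observe that any two candidate solutions $\psi_j^{0,\alpha_0}$ and $\psi_j^{0,\alpha_0'}$ are mean-zero and differ by something $\S_D^{0,0}$ sends to a constant, so \Cref{lem:ker} forces them to agree. The paper instead uses the inversion symmetry $\P D_1=D_2$ to combine the order-$\omega^0$ equations at $\alpha_0$ and $-\alpha_0$, eventually reaching $\S_D^{0,0}[\psi_1^0]=\tfrac12\chi_{\p D_1}-\tfrac12\chi_{\p D_2}$. Both work; yours is more direct.

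For (ii), your reduction to $C_{ij}^{1,\alpha_0}=\langle\S_1^{\alpha_0}[\psi_j^0],\psi_i^0\rangle+\tfrac{\iu m_im_j+2a_im_j}{2w_3L^2}$ is correct, and your parity-based evaluation of the double integral is essentially the paper's computation of $h=I_1+I_2+I_3$. The divergence from the paper is in how you obtain $m_j$. The paper never attempts a flux computation: it uses the $\P$-symmetry to relate $\hat\psi_1^{1,\alpha_0}$ and $\hat\psi_2^{1,-\alpha_0}$, then reads off $\int_{\p D}\hat\psi_j^{1,\alpha_0}=\iu w_3L^2+\iu a_j$ directly from the summed order-$\omega^0$ equation. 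Your flux argument would have to control the quasiperiodic side-wall terms, which are genuinely $O(\omega)$ and therefore compete with the very coefficient you want; making this uniform in $M$ and $\omega$ is delicate and unnecessary.

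Here is the simple bypass, using only what you have already established. You proved $\psi_1^0$ is odd under $\P$ and $\S_D^{0,0}$ commutes with $\P$, so $\S_D^{0,0}[\psi_1^0]$ is odd. Evaluating the identity $\S_D^{0,0}[\psi_1^0]=\chi_{\p D_1}+\kappa_1\chi_{\p D}$ at $x$ and at $\P x$ forces $\kappa_1=-\tfrac12$ (and likewise $\kappa_2=-\tfrac12$). Your own formula $\kappa_j=\tfrac{\iu m_j+a_j}{2w_3L^2}$ then gives $m_j=\iu(w_3L^2+a_j)$ immediately, with no far-field analysis at all. Substituting this value into your displayed formula reproduces the three matrices in the statement, with the $\tfrac{\iu a_ia_j}{2w_3L^2}$ residuals cancelling exactly as you anticipated.
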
	
\begin{proof}[Proof of (i)]
	To emphasise the role of $\alpha_0$, we will use the notation $\psi_i^0 = \psi_i^{0,\alpha_0}$ and $\c_i = \c_i^{\alpha_0}$ in this proof. From \Cref{lem:holo} we have the following expansion
	$$
		\left(\hat{\S}_D^{\alpha,\omega}\right)^{-1}[\chi_{\p D_i}] = \psi_i^{0,\alpha_0} + \omega\hat{\psi}_i^{1,\alpha_0} + O(\omega^2),
	$$
	for some function $\hat{\psi}_i^{1,\alpha_0}$ and for $i=1,2$. Expanding the orders of $\omega$, we find that 
	\begin{align}
		\int_{\p D} \psi_j^{0,\alpha_0}  \dx \sigma &= 0, \label{eq:-1th} \\
		\S_D^{0,0}[\psi_j^{0,\alpha_0}] + \frac{1}{2\iu w_3 L^2}\int_{\p D}\hat{\psi}_j^{1,\alpha_0} \dx \sigma - \int_{\p D}\frac{\alpha_0 \cdot (y_1,y_2)}{2w_3L^2} \psi_j^{0,\alpha_0}(y)\dx \sigma (y) &= \chi_{\p D_j}, \label{eq:0th} \\
		\S_D^{0,0}[\hat{\psi}_j^{1,\alpha_0}] + \frac{\alpha_0\cdot(x_1,x_2)}{2 w_3 L^2} \int_{\p D}\hat{\psi}_j^{1,\alpha_0} \dx \sigma + K\chi_{\p D} &= 0, \label{eq:1st}
	\end{align}
	for some constant $K$. Under the symmetry assumption \eqref{inversionsymmetry}, we have	
	\begin{equation}\label{eq:sym}
		{\psi}_1^{0,\alpha_0}(y) = {\psi}_2^{0,-\alpha_0}(\P y) =  -{\psi}_1^{0,-\alpha_0}(\P y), \qquad \hat{\psi}_1^{1,\alpha_0}(y) = \hat{\psi}_2^{1,-\alpha_0}(\P y),
	\end{equation}
	and, in particular, $\c_i^{\alpha_0} = \c_i^{-\alpha_0}$. Then, from \eqref{eq:0th} we find that
	\begin{equation}\label{eq:first}
		\S_D^{0,0}[\psi_1^{0,\alpha_0}+\psi_2^{0,-\alpha_0}] + \frac{1}{\iu w_3 L^2}\int_{\p D}\hat{\psi}_1^{1,\alpha_0}\dx \sigma -\frac{(\alpha_0,0) \cdot \c_1^{\alpha_0}}{w_3L^2} = \chi_{\p D},
	\end{equation}
	and that	
	\begin{equation}\label{eq:second}
	\S_D^{0,0}[\psi_1^{0,\alpha_0} - \psi_2^{0,-\alpha_0}] =\chi_{\p D_1}- \chi_{\p D_2}.
	\end{equation}
	In other words, for some constant $K$ we have
	\begin{equation}\label{eq:sumpsi}
		\S_D^{0,0}[\psi_1^{0,\alpha_0}+\psi_2^{0,-\alpha_0}]  = K\chi_{\p D},
	\end{equation}
	and since $\int_{\p D}\psi_i^{0,\alpha_0} = 0$, we have from \Cref{lem:ker} that $\psi_1^{0,\alpha_0} = -\psi_2^{0,-\alpha_0}$. Then, from \eqref{eq:second} we find that 
	$$\S_D^{0,0}[\psi_1^{0,\alpha_0}] =\frac{1}{2}\chi_{\p D_1}- \frac{1}{2}\chi_{\p D_2}.$$
	Since $\S_D^{0,0}$ and the right-hand side are independent of $\alpha_0$, and since $\S_D^{0,0}$ is injective on $L^2_0(\p D)$, we find that $\psi_i^{0,\alpha_0}$ is independent of $\alpha_0$.
	\end{proof}
	\begin{proof}[Proof of (ii)]
		From \cite[Lemma 3.11]{ammari2020exceptional} we have
		$$\psi_j^{1,\alpha_0} = \hat{\psi}_j^{1,\alpha_0} - \left(\hat\S_D^{\omega\alpha_0,\omega}\right)^{-1}\S_1^{\alpha_0} \psi_j^{0},$$
		and from \eqref{eq:-1th} it follows that 
		\begin{equation} \label{eq:C1}
			C^{1,\alpha_0} = \hat{C}^{1,\alpha_0} + h\begin{pmatrix} 1 & -1 \\ -1 & 1\end{pmatrix}, \qquad h = \int_{\D}\S_1^{\alpha_0}[\psi_1^{0}] \psi_1^{0} \dx \sigma,			
		\end{equation}
		where $\hat{C}^{1,\alpha_0} = \left(\hat{C}_{ij}^{1,\alpha_0}\right)_{i,j=1,2}$ is the matrix given by
		\begin{equation}\label{eq:Chat}
			\hat{C}_{ij}^{1,\alpha_0} = -\int_{\p D_i} \hat{\psi}_j^{1,\alpha_0} \dx \sigma.
		\end{equation}
		We begin by computing $\hat{C}^{1,\alpha_0}$. From \eqref{eq:first} we find that, for $j=1,2$,
	\begin{equation}\label{eq:firstsys}
	\int_{\p D} \hat{\psi}_j^{1,\alpha_0} \dx \sigma = \iu w_3L^2 + \iu(\alpha_0,0) \cdot \c_j.
	\end{equation}
	Multiplying \eqref{eq:1st} by $\psi_i^{0}$ and integrating around $\p D$ we have, using \eqref{eq:-1th}, that
	$$\int_{\p D} \psi_i^0 \S_D^{0,0}[\hat{\psi}_j^{1,\alpha_0}] \dx \sigma + \frac{(\alpha_0,0)\cdot\c_i}{2 w_3 L^2} \int_{\p D}\hat{\psi}_j^{1,\alpha_0} \dx \sigma = 0.$$
	Since $\S_D^{0,0}$ is self-adjoint in $L^2(\p D)$, we find using \eqref{eq:0th} that
	%$$\S_D^{0,0}[\psi_i^{0}] = - \frac{1}{2\iu w_3 L^2}\int_{\p D}\hat{\psi}_i^{1,\alpha_0} \dx \sigma - \frac{(\alpha_0,0) \cdot \c_i}{2w_3L^2} + \chi_{\p D_i}$$	
	$$\int_{\p D_i} \hat{\psi}_j^{1,\alpha_0} \dx \sigma -  \frac{1}{2\iu w_3 L^2}\left( \int_{\p D}\hat{\psi}_i^{1,\alpha_0} \dx \sigma -2\iu  (\alpha_0,0)\cdot\c_i \right) \int_{\p D} \hat{\psi}_j^{1,\alpha_0} \dx \sigma = 0.$$
	Together with \eqref{eq:firstsys}, we find that
	$$\int_{\p D_i} \hat{\psi}_j^{1,\alpha_0} \dx \sigma =  \frac{1}{2\iu w_3 L^2}\left(\iu w_3L^2 - \iu(\alpha_0,0) \cdot \c_i\right)\left(\iu w_3L^2 + \iu(\alpha_0,0) \cdot \c_j\right).$$
	%Consequently we find that
	%$$\hat{C}_{11}^{1,\alpha_0} = -\frac{1}{2\iu w_3 L^2}\left(\iu w_3L^2 - \iu(\alpha_0,0) \cdot \c_1\right) \left(\iu w_3L^2 + \iu(\alpha_0,0) \cdot \c_1\right), \ \hat{C}_{12}^{1,\alpha_0} = - \frac{1}{2\iu w_3 L^2}\left(\iu w_3L^2 -\iu(\alpha_0,0) \cdot \c_1\right)^2$$
	%$$\hat{C}_{21}^{1,\alpha_0} = -\frac{1}{2\iu w_3 L^2}\left(\iu w_3L^2 +\iu(\alpha_0,0) \cdot \c_1\right)^2, \ \hat{C}_{22}^{1,\alpha_0} = -\frac{1}{2\iu w_3 L^2}\left(\iu w_3L^2 +\iu(\alpha_0,0) \cdot \c_1\right) \left(\iu w_3L^2 - \iu(\alpha_0,0) \cdot \c_1\right)$$
	From the definition of $\hat{C}^{1,\alpha_0}$ in \eqref{eq:Chat}, we then have 
	\begin{equation} \label{eq:C1hat}
		\hat{C}^{1,\alpha_0} = -\frac{\iu w_3 L^2}{2} \begin{pmatrix} 1 & 1 \\ 1 & 1\end{pmatrix} + \iu  (\alpha_0,0)\cdot\c_1\begin{pmatrix} 0 & 1 \\ -1 & 0\end{pmatrix} -\frac{\big( (\alpha_0,0)\cdot\c_1\big)^2}{2\iu w_3L^2} \begin{pmatrix} 1 & -1 \\ -1 & 1\end{pmatrix} +  O(\omega).
	\end{equation}
	The only remaining task is to explicitly compute $h$. We follow the proof of \cite[Lemma 3.17]{ammari2020exceptional} and write the kernel function $G_1^{\alpha_0}$ of $\S_1^{\alpha_0}$ as
	$$G_1^{\alpha_0}(x) = K_1(x) + K_2^{\alpha_0}(x) + K_3^{\alpha_0}(x),$$
	where
	$$K_1(x) = \frac{\iu w_3x_3^2}{4L^2}, \quad K_2^{\alpha_0}(x) = \alpha_0\cdot\left( \frac{\iu|x_3|(x_1,x_2)}{2L^2} + g_1(x)\right), \quad K_3^{\alpha_0}(x) = \frac{\iu \left(\alpha_0\cdot (x_1,x_2)\right)^2}{4w_3L^2}.$$
	We have 
	$$K_1(x-y) = \frac{\iu w_3}{4L^2}\left(x_3^2 - 2x_3y_3 + y_3^2\right),$$
	and since $\int_{\D}\psi_1^0\dx\sigma = 0$ we conclude that 
	\begin{align*}
		I_1:=\int_{\D}\int_{\D} K_1(x-y)\psi_1^0(x)\psi_1^0(y) \dx\sigma(x)\dx\sigma(y) &= -\frac{\iu w_3}{2L^2}\int_{\D}x_3 \psi_1^0(x)\dx\sigma(x)\int_{\D}y_3 \psi_1^0(y)\dx\sigma(y) \\
		&=-\frac{\iu w_3 c_3^2}{2L^2}.
	\end{align*}
	We observe that $K_2^{\alpha_0}(\P x) = - K_2^{\alpha_0}(x)$ while $\psi_1^0(\P x) = -\psi_1^0(x)$. Therefore 
	$$I_2:=\int_{\D}\int_{\D} K_2^{\alpha_0}(x-y)\psi_1^0(x)\psi_1^0(y) \dx\sigma(x)\dx\sigma(y) = 0.$$
	Finally, we have
	$$K_3^{\alpha_0}(x-y) = \frac{\iu}{4w_3L^2}\left( \left(\alpha_0\cdot (x_1,x_2)\right)^2 - 2\left(\alpha_0\cdot (x_1,x_2)\right)\left(\alpha_0\cdot (y_1,y_2)\right) + \left(\alpha_0\cdot (y_1,y_2)\right)^2 \right),$$
	and hence
	\begin{multline*}
		I_3:=\int_{\D}\int_{\D} K_3^{\alpha_0}(x-y)\psi_1^0(x)\psi_1^0(y) \dx\sigma(x)\dx\sigma(y) = \\ \frac{\iu}{4w_3L^2}\left(\int_{\D}\left(\alpha_0\cdot (x_1,x_2)\right)^2 \psi_1^0(x)\dx \sigma(x) \int_{\D}\psi_1^0\dx\sigma\right. 
		+\int_{\D}\left(\alpha_0\cdot (y_1,y_2)\right)^2 \psi_1^0(y)\dx \sigma(y) \int_{\D}\psi_1^0\dx\sigma\\ -2\left.\int_{\D}\alpha_0\cdot(x_1,x_2) \psi_1^0(x)\dx\sigma(x)\int_{\D}\alpha_0\cdot(y_1,y_2) \psi_1^0(y)\dx\sigma(y)\right) \\
		= \frac{\big( (\alpha_0,0)\cdot\c_1\big)^2}{2\iu w_3L^2}.
	\end{multline*}
	In total, we see that
	$$h = I_1+I_2+I_3=-\frac{\iu w_3 c_3^2}{2L^2} + \frac{\big( (\alpha_0,0)\cdot\c_1\big)^2}{2\iu w_3L^2}.$$
	This, together with \eqref{eq:C1} and \eqref{eq:C1hat}, proves the claim.
	\end{proof}
	
	In order to introduce more sophisticated symmetry assumptions, we define the maps $\P_{12}:\R^3\to \R^3$ and $\P_3:\R^3\to \R^3$ by	
	$$\P_{12}(x_1,x_2,x_3) = (-x_1,-x_2,x_3) \quad\text{and}\quad \P_3(x_1,x_2,x_3) = (x_1,x_2,-x_3).$$
	We will occasionally use $\P, \P_{12}$ and $\P_3$ as operators on $L^2(\p D)$ defined through composition, \textit{e.g.} for $\varphi \in L^2(\p D)$ we define $(\P\varphi)(x) = \varphi(\P x)$. The following lemma, which shows properties of $\c_1$ defined in \eqref{eq:c1}, follows directly from symmetry arguments.
	\begin{lemma} \label{lem:Psym}
		If $\P_3 D_i = D_i$ for $i=1,2$, then $c_3 =0$. If instead $\P_3 D_1 = D_2$, then $c_1 = c_2 = 0$.
\end{lemma}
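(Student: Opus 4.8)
The plan is to reduce both claims to the single, $\alpha_0$-independent identity established in the proof of \Cref{lem:C1}(i), namely
\[
\S_D^{0,0}[\psi_1^{0}] = \tfrac12\chi_{\D_1} - \tfrac12\chi_{\D_2},
\]
together with the facts that $\psi_1^{0}\in L^2_0(\p D)$ (by \eqref{eq:-1th}) and that $\S_D^{0,0}$ is injective on $L^2_0(\p D)$ (\Cref{lem:ker}). The geometric input I would exploit is the reflection symmetry of the periodic Green's function in the $x_3$-variable: from the spectral representation \eqref{eq:specrep_0}, $G^{0,0}$ depends on $x_3$ only through $|x_3|$, so $G^{0,0}(\P_3 x) = G^{0,0}(x)$.

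First I would establish an intertwining relation between $\S_D^{0,0}$ and the pull-back by $\P_3$. Under either symmetry hypothesis one has $\P_3 D = D$, so $\P_3$ is an orthogonal involution mapping $\p D$ onto itself and preserving the surface measure. Changing variables $y\mapsto \P_3 y$ in the definition of $\S_D^{0,0}$ and using $G^{0,0}(x-\P_3 y) = G^{0,0}(\P_3 x - y)$ (a consequence of the $x_3$-evenness above together with $\P_3^2 = \mathrm{id}$), I obtain
\[
\S_D^{0,0}[\P_3\varphi](x) = \S_D^{0,0}[\varphi](\P_3 x), \qquad \varphi\in L^2(\p D).
\]
Since $\P_3$ preserves the mean, it maps $L^2_0(\p D)$ to itself.

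For the first claim, assume $\P_3 D_i = D_i$, so that each $\chi_{\D_i}$ is $\P_3$-invariant. Applying the intertwining relation to $\psi_1^{0}$ and using the key identity gives $\S_D^{0,0}[\P_3\psi_1^{0}] = \tfrac12\chi_{\D_1} - \tfrac12\chi_{\D_2} = \S_D^{0,0}[\psi_1^{0}]$. As both arguments lie in $L^2_0(\p D)$, injectivity of $\S_D^{0,0}$ forces $\P_3\psi_1^{0} = \psi_1^{0}$, i.e. $\psi_1^{0}(\P_3 x) = \psi_1^{0}(x)$. Substituting $y\mapsto\P_3 y$ in $c_3 = \int_{\p D} y_3\,\psi_1^{0}(y)\de\sigma(y)$ and noting $(\P_3 y)_3 = -y_3$ then yields $c_3 = -c_3$, hence $c_3 = 0$.

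For the second claim, assume $\P_3 D_1 = D_2$; now $\P_3$ exchanges $\chi_{\D_1}$ and $\chi_{\D_2}$, so the same computation gives $\S_D^{0,0}[\P_3\psi_1^{0}] = \tfrac12\chi_{\D_2} - \tfrac12\chi_{\D_1} = -\S_D^{0,0}[\psi_1^{0}]$ and hence $\P_3\psi_1^{0} = -\psi_1^{0}$. Since $(\P_3 y)_1 = y_1$ and $(\P_3 y)_2 = y_2$, the same change of variables applied to $c_1$ and $c_2$ gives $c_1 = -c_1$ and $c_2 = -c_2$, so $c_1 = c_2 = 0$. The only genuine content is the parity of $G^{0,0}$ in $x_3$ and the injectivity of $\S_D^{0,0}$ on $L^2_0(\p D)$; the remaining steps are routine, and the main point to handle carefully is the preservation of the surface measure and of $\p D$ under the orthogonal map $\P_3$, which holds because $\P_3 D = D$ in both cases.
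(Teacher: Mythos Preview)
Your proof is correct and is precisely the kind of symmetry argument the paper has in mind; the paper does not spell out any details beyond stating that the lemma ``follows directly from symmetry arguments,'' and your write-up is a careful realization of that. The key steps---the $x_3$-evenness of $G^{0,0}$ from \eqref{eq:specrep_0}, the resulting intertwining $\S_D^{0,0}\circ\P_3 = \P_3\circ\S_D^{0,0}$, the defining equation $\S_D^{0,0}[\psi_1^{0}] = \tfrac12\chi_{\D_1}-\tfrac12\chi_{\D_2}$ from \Cref{lem:C1}(i), and injectivity on $L^2_0(\p D)$ from \Cref{lem:ker}---are all valid and give exactly the parity of $\psi_1^{0}$ under $\P_3$ needed in each case.
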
	
We remark that if $\P_3 D_1 = D_1$ it follows from \eqref{inversionsymmetry} that $\P_3 D_2 = D_2$.

\subsection{Green's function for the free-space Helmholtz equation}
We conclude this section by collecting some well-known results on the Green's function in free-space $G^k$, given by 
$$
G^k(x) = 
-\frac{e^{\iu k|x|}}{4\pi|x|}.$$
These results will be useful for representing the solution of \eqref{eq:scattering_quasi} in the interior of $D$ (for more details we refer, for example, to \cite{ammari2018mathematical}). For a given bounded domain $D$ in  $\mathbb{R}^{3}$, with Lipschitz boundary, the single layer potential of the density function $\varphi\in L^{2}(\partial D)$ is defined by
\begin{equation*}
	\S_{D}^k[\varphi](x):=\int_{\partial D} G^k( x-y ) \varphi(y)\dx \sigma(y),~~~x\in \mathbb{R}^{3}.
\end{equation*}
Then  the following jump relation holds
\begin{equation} \label{jump1}
	\left. \frac{\partial}{\partial\nu} \S_{D}^k[\varphi]\right|_{\pm}(x)=\left (\pm\frac{1}{2}I+(\K_{D}^{k})^*\right)[\varphi](x),~~~x\in \partial D,
\end{equation}
where the operator $(\K_{D}^{k})^*$ is the Neumann--Poincar\'e operator associated to the domain $D$ and is defined by 
\begin{equation*}
	(\K_{D}^{k})^*[\varphi](x)= \mathrm{p.v.}\int_{\p D}\frac{\partial G^k(x - y)}{\partial \nu(x)} \varphi(y)\dx \sigma(y),~~~x\in \partial D.
\end{equation*}
We denote $\S_D := \S_D^0$ and $\K_D^* := (\K_D^{0})^*$. For a small  $k$ we have asymptotic expansions given by \cite[Appendix A]{ammari2018minnaert}
\begin{align}
	\S_{D}^{k} [\varphi]=\S_D[\varphi]+  \sum_{n=1}^\infty k^n \S_{D,n} [\varphi], \quad
	(\K_{D}^{k})^*[\varphi] =\K_{ D}^{*}+ \sum_{n=1}^\infty k^n \K_{D,n} [\varphi], \label{eq:expSK}
\end{align}
which converge in $\B(L^2(\p D), H^1(\p D)))$ and $\B(L^2(\p D), L^2(\p D)))$, respectively, where
\begin{align}
	\S_{D,n} [\varphi] (x)& := -\frac{\iu^n}{4\pi n!}  \int_{\p D} |x - y|^{n-1} \varphi(y) \dx \sigma(y), \quad n=1,2,\dots,\\
	\K_{D,n} [\varphi] (x)& := -\frac{\iu^n (n-1)}{4\pi n!}  \int_{\p D} {\langle\, x -y, \nu_x \rangle }{|x - y|^{n-3}}\varphi(y) \dx \sigma(y), \quad n=1,2,\dots.
\end{align}
It is known that $\S_D : L^2(\p D)\rightarrow H^1(\p D)$ is invertible and that its inverse is bounded.

Finally, we present some useful formulas which are frequently used in the subsequent analysis and were proved in \cite{ammari2017double}. 
\begin{lemma}\label{lem:trick_layer}
	The following identities hold for any $\varphi\in L^2(\p D)$: for $j = 1, 2$,
	\begin{align*}
		(i)& \quad \int_{\p D_j}   \left(-\frac{1}{2}I+\K_{ D}^{*} \right) [\varphi]  \dx \sigma =0, & (ii)& \quad \int_{\p D_j} \K_{D,2} [\varphi] \dx \sigma= -\int_{D_j}\S_D [\varphi]\dx \sigma, \\
		(iii)& \quad \int_{\p D_j} \K_{D,3}[\varphi]\dx \sigma= \frac{\iu |D_j|}{4\pi} \int_{\p D} \varphi \dx \sigma, & (iv)& \quad \S_{D,1} [\varphi] (x) = -\frac{\iu}{4\pi}  \int_{\p D}\varphi \dx \sigma.
	\end{align*}
\end{lemma}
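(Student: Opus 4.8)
The plan is to derive all four identities directly from the explicit kernels of $\S_{D,n}$ and $\K_{D,n}$ together with the divergence theorem: identity (iv) is immediate, while (i)--(iii) each reduce to integrating an exact normal derivative (or a divergence) over the solid component $D_j$. Identity (iv) follows by substituting $n=1$ into the definition of $\S_{D,n}$: since $|x-y|^{n-1}=|x-y|^0=1$, the kernel collapses and $\S_{D,1}[\varphi](x)=-\frac{\iu}{4\pi}\int_{\p D}\varphi\dx\sigma$ with no dependence on $x$. For (i), I would recognise $\left(-\frac{1}{2}I+\K_D^*\right)[\varphi]$ as the interior trace $\frac{\p}{\p\nu}\S_D[\varphi]\big|_-$ of the normal derivative, via the $k=0$ case of the jump relation \eqref{jump1}. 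Since $G^0$ is the Laplace fundamental solution, $\S_D[\varphi]$ is harmonic in each connected component $D_j$, so Green's first identity with test function $1$ gives $\int_{\p D_j}\frac{\p}{\p\nu}\S_D[\varphi]\big|_-\dx\sigma=\int_{D_j}\Delta\S_D[\varphi]\dx x=0$.

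For (ii), the key observation is that the $n=2$ kernel is a pure normal derivative. Writing $\frac{\p}{\p\nu_x}|x-y|=\langle x-y,\nu_x\rangle/|x-y|$, one checks from the definitions that $\K_{D,2}[\varphi](x)=\frac{\p}{\p\nu_x}\S_{D,2}[\varphi](x)$, where $\S_{D,2}[\varphi](x)=\frac{1}{8\pi}\int_{\p D}|x-y|\varphi(y)\dx\sigma(y)$; because this kernel is bounded, there is no jump across $\p D$. I would then apply the divergence theorem on $D_j$ to obtain $\int_{\p D_j}\K_{D,2}[\varphi]\dx\sigma=\int_{D_j}\Delta\S_{D,2}[\varphi]\dx x$. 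Using $\Delta_x|x-y|=2/|x-y|$ in $\R^3$, the Laplacian of $\S_{D,2}[\varphi]$ is $\frac{1}{4\pi}\int_{\p D}|x-y|^{-1}\varphi\dx\sigma=-\S_D[\varphi]$, which yields the claimed value $-\int_{D_j}\S_D[\varphi]\dx x$.

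For (iii), I would substitute $n=3$ to get $\K_{D,3}[\varphi](x)=\frac{\iu}{12\pi}\int_{\p D}\langle x-y,\nu_x\rangle\varphi(y)\dx\sigma(y)$, then use Fubini to exchange the $x$- and $y$-integrations. The inner integral is computed by the divergence theorem, $\int_{\p D_j}\langle x-y,\nu_x\rangle\dx\sigma(x)=\int_{D_j}\nabla_x\cdot(x-y)\dx x=3|D_j|$, independently of $y$; collecting the constants then gives exactly $\frac{\iu|D_j|}{4\pi}\int_{\p D}\varphi\dx\sigma$.

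The computations themselves are elementary, so the only point requiring care---and the genuine content behind the bookkeeping---is the justification of the boundary-to-volume integration on the Lipschitz components $D_j$ in steps (i) and (ii). Concretely, one must confirm that $\S_D[\varphi]$ and $\S_{D,2}[\varphi]$ are regular enough on $\overline{D_j}$ (for instance in $H^2(D_j)$) that their normal-derivative traces, namely $\left(-\frac{1}{2}I+\K_D^*\right)[\varphi]$ and the continuous density $\K_{D,2}[\varphi]$, are well defined and free of jumps, so that Green's identity applies. I expect this to be the main---though standard---obstacle; the remaining manipulations are exact and require no estimates.
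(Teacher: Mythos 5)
Your proposal is correct in every detail: (iv) is immediate from the definition, (i) follows from the interior trace identity and harmonicity of $\S_D[\varphi]$ in $D_j$, (ii) from recognizing $\K_{D,2}[\varphi]=\frac{\p}{\p\nu}\S_{D,2}[\varphi]$ (jump-free, since the kernel $|x-y|$ has bounded gradient) together with $\Delta_x|x-y|=2/|x-y|$ and the divergence theorem, and (iii) from Fubini and $\int_{\p D_j}\langle x-y,\nu_x\rangle\dx\sigma(x)=3|D_j|$. The paper itself gives no proof, deferring to \cite{ammari2017double}, where these identities are established by essentially the same elementary computations, so your argument matches the intended one.
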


	\section{Characterization of Fano-type resonances}
	 \label{sec:fano}
	 In this section we will study the resonant frequencies of \eqref{eq:scattering_quasi}. The solutions to \eqref{eq:scattering_quasi} can be represented as
\begin{equation} \label{eq:rep_quasi}
u = \begin{cases} u^\mathrm{in} +
\S_{D}^{\alpha,\omega}[\psi](x), & x\in Y \setminus \overline{D},\\
\S_{D}^{\frac{\omega}{v_b}}[\phi](x), & x\in \D,
\end{cases}
\end{equation} 
for some surface densities $(\phi,\psi)\in L^2(\D)\times L^2(\D)$.
Using the jump conditions \eqref{eq:jump1_quasi} and \eqref{eq:jump2_quasi}, we see that the layer densities $\phi$ and $\psi$ satisfy
\begin{align}
\S_{D}^{\frac{\omega}{v_b}}[\phi]-\S_{D}^{\alpha,\omega}[\psi]=u^{\mathrm{in}} \quad \text{on}\ \D, \label{eq:quasi_full1} \\
\left(-\frac{1}{2}I+(\K_{D}^{{\frac{\omega}{v_b}}})^*\right)[\phi]-\delta\left(\frac{1}{2}I+(\K_{D}^{-\alpha,\omega})^*\right)[\psi]=\delta\ddp{u^{\mathrm{in}}}{\nu} \quad \text{on} \ \D. \label{eq:quasi_full2}
\end{align}
Letting $\eta = \S_{D}^{\omega\alpha_0,\omega}[\psi]$, the equations \eqref{eq:quasi_full1} and \eqref{eq:quasi_full2} can be written equivalently as
\begin{equation}\label{int_eq_single}
\mathcal{A}^\omega [\eta] =  F[u^{\mathrm{in}}],
\end{equation}
where 
\begin{equation}
\mathcal{A}^\omega :=\left( -\frac{1}{2} I +(\K_D^{\frac{\omega}{v_b}})^* \right) \left(\S_D^{\frac{\omega}{v_b}}\right)^{-1}- \delta \left( \frac{1}{2} I + (\K_D^{-\omega\alpha_0,\omega})^*\right)
\left(\S_D^{\omega\alpha_0,\omega}\right)^{-1},
\end{equation}
and 
$$F[u^{\mathrm{in}}] := \delta\ddp{u^{\mathrm{in}}}{\nu}- \left(- \frac{1}{2} I + (\K_D^{\frac{\omega}{v_b}})^*\right)\left(\S_D^{\frac{\omega}{v_b}}\right)^{-1}[u^{\mathrm{in}}].$$
Then $\mathcal{A}^\omega$ is holomorphic in a neighbourhood of the origin. Moreover, the quasiperiodic resonant frequencies are precisely the characteristic values of $\mathcal{A}^\omega$ with non-negative real part; in other words the values of $\omega$ in the right-half plane such that $\A^\omega[\eta]=0$ has a non-trivial solution $\eta$ (see, for example, \cite[Chapter 1]{ammari2018mathematical} for the definition and further properties of characteristic values).

We now investigate the subwavelength resonant frequencies that exist in the current setting. By analogous steps as those in the proof of \cite[Lemma 3.1]{ammari2020honeycomb}, we obtain the following result. 
\begin{lemma}
	Let $\alpha = \omega\alpha_0$ for some $\alpha_0$ which is independent of $\delta$ and satisfies $|\alpha_0|=1$. Then, there are precisely two quasiperiodic resonant frequencies $\omega_1$ and $\omega_2$ which depend continuously on $\delta$ and are such that $\omega_i(0) = 0$. 
\end{lemma}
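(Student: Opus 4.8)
The resonant frequencies are exactly the characteristic values of the holomorphic operator-valued function $\A^\omega$ lying in the closed right half-plane, so the whole problem is to locate and count the characteristic values of $\A^\omega$ in a small disc $B(0,r)$ centred at the origin. Since $(\S_D^{\omega/v_b})^{-1}$, $(\S_D^{\omega\alpha_0,\omega})^{-1}$ and the associated Neumann--Poincar\'e operators are holomorphic in $\omega$ near $0$ (cf.\ \Cref{lem:holo} and the expansions \eqref{eq:expSK}), $\A^\omega$ is holomorphic, and the plan is to combine the Gohberg--Sigal theory (the generalized argument principle together with the generalized Rouch\'e theorem) with a finite-dimensional reduction onto the kernel of the limiting operator. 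First I would analyse the operator obtained by setting $\omega=\delta=0$.

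At $\omega=\delta=0$ the family degenerates to $\A^0=\bigl(-\tfrac12 I+\K_D^*\bigr)\S_D^{-1}$. I would show that this is a Fredholm operator of index zero whose kernel is $\mathrm{span}\{\chi_{\p D_1},\chi_{\p D_2}\}$: a density $\varphi$ lies in $\ker\bigl(-\tfrac12 I+\K_D^*\bigr)$ precisely when $\S_D[\varphi]$ solves the interior Neumann problem with zero data, hence is constant on each connected component $D_i$, so $\ker\bigl(-\tfrac12 I+\K_D^*\bigr)$ is two-dimensional and $\A^0$ annihilates exactly the two characteristic functions $\chi_{\p D_1},\chi_{\p D_2}$. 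Consequently $\omega=0$ is an isolated characteristic value of the $\delta=0$ family $\widehat\A^\omega:=\A^\omega|_{\delta=0}$, and by the generalized Rouch\'e theorem, for all sufficiently small $\delta$ the operator $\A^\omega$ has the same total number of characteristic values (with multiplicities) inside $B(0,r)$, all of which collapse onto $0$ as $\delta\to0$; this already yields continuity in $\delta$ and the property $\omega_i(0)=0$.

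The count itself I would obtain from a Lyapunov--Schmidt reduction onto $\mathrm{span}\{\chi_{\p D_1},\chi_{\p D_2}\}$, which turns the equation $\A^\omega[\eta]=0$ into $\det\mathcal{M}(\omega,\delta)=0$ for a $2\times2$ matrix with entries $\mathcal{M}_{ij}\approx\int_{\p D_i}\A^\omega[\chi_{\p D_j}]\dx\sigma$. The key simplification is that $\partial_\omega\widehat\A^\omega|_{\omega=0}$ vanishes on the kernel: because $\K_{D,1}=0$ and $\S_{D,1}[\varphi]=-\tfrac{\iu}{4\pi}\int_{\p D}\varphi\dx\sigma$ is a constant function (\Cref{lem:trick_layer}(iv)), the first-order operator maps $\chi_{\p D_j}$ into $\bigl(-\tfrac12 I+\K_D^*\bigr)$ applied to a multiple of $\S_D^{-1}[\chi_{\p D}]$, which lies in $\ker\bigl(-\tfrac12 I+\K_D^*\bigr)$ and is therefore annihilated. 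Hence the $\omega$-dependence enters at order $\omega^2$, balancing the $O(\delta)$ contribution and reproducing the scaling $\omega\sim\delta^{1/2}$. Evaluating the entries with \Cref{lem:trick_layer}(i)--(ii) for the $\omega^2$ part (which leaves only $\tfrac{1}{v_b^2}\K_{D,2}$ and produces $-\tfrac{|D_1|}{v_b^2}I$) and with \Cref{lem:intK} and the definition of $C^0$ for the $\delta$ part, the leading reduced matrix is $\mathcal{M}(\omega,\delta)=-\tfrac{|D_1|}{v_b^2}\,\omega^2 I+\delta\,C^0+\text{(higher order)}$.

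Finally, $\det\mathcal{M}(\omega,\delta)=0$ reads, to leading order, $\omega^2=\tfrac{v_b^2\delta}{|D_1|}\lambda_i$ with $\lambda_1,\lambda_2$ the eigenvalues of $C^0$; choosing for each $i$ the square root with non-negative real part gives precisely two resonant frequencies $\omega_1,\omega_2$, continuous in $\delta$ with $\omega_i(0)=0$. The main obstacle is making the count \emph{exactly two} rigorous: one must verify the first-order cancellation and assemble $\mathcal{M}$ so that the $\omega^2$ and $\delta$ terms appear at the same order, and then argue that each of the two values of $\omega^2$ supplies exactly one characteristic value with non-negative real part. This last point is delicate because $C^0$ is degenerate (it has eigenvalue $0$ by \Cref{lem:cap0}), so for that branch the leading equation is insufficient and the correct root must be separated using the higher-order terms from \Cref{lem:C1}; the symmetry $\A^{-\bar\omega}=\overline{\A^\omega}$, which pairs characteristic values as $\{\omega,-\bar\omega\}$ about the imaginary axis, is what ultimately guarantees that exactly one resonance issues from each branch.
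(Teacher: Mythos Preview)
Your proposal is correct and follows essentially the same strategy that the paper invokes: the paper gives no self-contained proof but refers to \cite[Lemma 3.1]{ammari2020honeycomb}, whose argument is precisely the Gohberg--Sigal/Rouch\'e count combined with a finite-dimensional reduction onto $\mathrm{span}\{\chi_{\p D_1},\chi_{\p D_2}\}$ that you outline. Your identification of the first-order cancellation (via $\K_{D,1}=0$ and $\S_{D,1}$ mapping to constants), of the leading reduced matrix $-\tfrac{|D_1|}{v_b^2}\omega^2 I+\delta C^0$, and of the degeneracy at $\lambda_1=0$ requiring the $C^{1,\alpha_0}$ correction, are all exactly the ingredients used there and subsequently in the proof of \Cref{thm:res0}.
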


Given their existence, we may derive the asymptotic behaviour of $\omega_i$ as $\delta\to0$, which will be used to describe the Fano-type resonance of the metascreen.
\begin{thm} \label{thm:res0} 
	Let $\alpha = \omega\alpha_0$ for some $\alpha_0$ that is independent of $\delta$ and satisfies $|\alpha_0|=1$. Let $w_3$ and $c_3$ be defined by \eqref{eq:w} and \eqref{eq:c1} respectively. Then, as $\delta \to 0$, 	
\begin{align*}
\omega_1 &= - \frac{\iu \delta v_b^2 w_3 L^2 }{|D_1|} + O(\delta^2), \\
\omega_2& = \sqrt{\frac{2\delta v_b^2 C_{11}^0}{|D_1|}} - \frac{\iu \delta v_b^2  w_3 c_3^2}{2|D_1|L^2} + O(\delta^{3/2}).
%\omega_1& = \sqrt{\frac{2\delta v_b^2 C_{11}^0}{|D_1|}}  - \frac{\iu \delta v_b^2 \alpha_{0,1}^2 c^2}{2w_3|D_1| L^2} + O(\delta^{3/2})\\
%\omega_2 &= - \frac{\iu \delta v_b^2 w_3 L^2 }{|D_1|} + O(\delta^2)
\end{align*}
\end{thm}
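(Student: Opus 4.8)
The plan is to convert the characteristic value problem $\A^\omega[\eta]=0$ into a finite-dimensional $2\times2$ algebraic system by integrating over each boundary $\p D_i$, and then to solve that system asymptotically. Writing $\phi=(\S_D^{\omega/v_b})^{-1}[\eta]$ and $\psi=(\S_D^{\omega\alpha_0,\omega})^{-1}[\eta]$, the resonance condition is $\left(-\tfrac12 I+(\K_D^{\omega/v_b})^*\right)[\phi]=\delta\left(\tfrac12 I+(\K_D^{-\omega\alpha_0,\omega})^*\right)[\psi]$. I would integrate this identity over $\p D_i$ for $i=1,2$. On the left, \Cref{lem:trick_layer}(i) annihilates the leading term, the order-$\omega$ term vanishes since $\K_{D,1}=0$, and parts (ii)--(iii) turn the $\omega^2$ and $\omega^3$ contributions into $-\tfrac{\omega^2}{v_b^2}\int_{D_i}\S_D[\phi]\de\sigma$ and $\tfrac{\omega^3}{v_b^3}\tfrac{\iu|D_i|}{4\pi}\int_{\p D}\phi\de\sigma$. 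On the right, \Cref{lem:intK} reduces the leading term to $\int_{\p D_i}\psi\de\sigma$ and the order-$\omega$ term to $\omega\tfrac{\iu|D_i|}{2w_3L^2}\int_{\p D}\psi\de\sigma$.

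The next step is to express these integrals through the leading boundary values of $\eta$. Because the modes are quasi-static, I expect $\eta=\sum_j V_j\chi_{\p D_j}+O(\omega)$ for some vector $\mathbf{V}=(V_1,V_2)^\top$; then $\S_D[\phi]=\eta+O(\omega)$ is harmonic in $D_i$ with constant boundary value $V_i$, so $\int_{D_i}\S_D[\phi]\de\sigma=V_i|D_i|+\dots$. Using the holomorphy of $(\S_D^{\omega\alpha_0,\omega})^{-1}$ from \Cref{lem:holo} and the definitions \eqref{eq:psi0}, I would write $\psi=\sum_j V_j(\psi_j^0+\omega\psi_j^{1,\alpha_0})+O(\omega^2)$, so that $\int_{\p D_i}\psi\de\sigma=-(C^0\mathbf{V})_i-\omega(C^{1,\alpha_0}\mathbf{V})_i+O(\omega^2)$; moreover, since $\int_{\p D}\psi_j^0\de\sigma=0$, the total charge $\int_{\p D}\psi\de\sigma$ is $O(\omega)$, which is exactly what controls the $\omega^{-1}$ singularity of $G^{\omega\alpha_0,\omega}$. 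Collecting terms and using $|D_1|=|D_2|$, the reduced system takes the form $\left(-\tfrac{\omega^2|D_1|}{v_b^2}I+\delta C^0+\delta\omega\, C^{1,\alpha_0}\right)\mathbf{V}+(\text{$O(\omega^3)$ term})\,\mathbf{1}=0$, where $\mathbf{1}=(1,1)^\top$.

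I would then diagonalize in the eigenbasis of $C^0$, which by \Cref{lem:cap0} consists of $\mathbf{e}_1\propto(1,1)$ with eigenvalue $0$ and $\mathbf{e}_2\propto(1,-1)$ with eigenvalue $2C_{11}^0$. Projecting onto $\mathbf{e}_2$ eliminates the $\mathbf{1}$-term (as $\mathbf{e}_2\perp\mathbf{1}$), and the leading $O(\delta)$ balance gives $\omega_2^2=2\delta v_b^2 C_{11}^0/|D_1|$; the $O(\delta\omega)$ correction, evaluated via $\mathbf{e}_2^\top C^{1,\alpha_0}\mathbf{e}_2=-\iu w_3 c_3^2/L^2$ from \Cref{lem:C1}(ii), produces the stated imaginary shift of $\omega_2$. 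Projecting onto $\mathbf{e}_1$ annihilates the $C^0$-term, so the surviving balance is between the two $O(\delta^2)$ contributions $-\tfrac{\omega^2|D_1|}{v_b^2}$ and $\delta\omega\,\mathbf{e}_1^\top C^{1,\alpha_0}\mathbf{e}_1=-\iu\delta\omega w_3 L^2$; dividing by $\omega$ yields $\omega_1=-\iu\delta v_b^2 w_3 L^2/|D_1|+O(\delta^2)$, the $O(\omega^3)=O(\delta^3)$ radiation term being negligible here.

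The main obstacle is the rigorous justification of the reduction rather than the algebra: one must show that the leading part of each small eigenmode is genuinely captured by the piecewise-constant ansatz $\mathbf{V}$ and that the component of $\eta$ orthogonal to $\mathrm{span}\{\chi_{\p D_1},\chi_{\p D_2}\}$ contributes only at higher order, so that the truncated system governs the resonances. Since the preceding lemma already guarantees that there are exactly two subwavelength resonances with $\omega_i(0)=0$, it suffices to check that the reduced $2\times2$ system has exactly two small roots and that they coincide with the expansions above; the holomorphy in \Cref{lem:holo} and the kernel and injectivity structure in \Cref{lem:ker} are what make the error terms uniform and validate truncating the asymptotic series at the required order.
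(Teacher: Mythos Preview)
Your proposal is correct and follows essentially the same route as the paper: both start from the piecewise-constant ansatz $\eta=\sum_j V_j\chi_{\p D_j}+O(\omega^2+\delta)$, integrate the identity $\A^\omega[\eta]=0$ over each $\p D_i$ using \Cref{lem:trick_layer} and \Cref{lem:intK}, and thereby reduce to the $2\times2$ system $\bigl(\delta C^0+\delta\omega\,C^{1,\alpha_0}-\tfrac{\omega^2|D_1|}{v_b^2}I\bigr)\mathbf{V}=O(\delta\omega^2+\omega^4)$, which is then solved by perturbing the eigenpairs of $C^0$ exactly as you describe. The only cosmetic difference is that the paper expands the composite operator $\A^\omega$ directly rather than tracking $\phi$ and $\psi$ separately, and it absorbs your explicit $O(\omega^3)\,\mathbf{1}$ radiation term into the $O(\delta\omega^2+\omega^4)$ remainder from the outset; the algebra and the use of \Cref{lem:C1}(ii) for $\mathbf{e}_i^\top C^{1,\alpha_0}\mathbf{e}_i$ are identical.
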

\begin{proof}
	We will expand the operator $\A^\omega$ as $\delta \to0$ and $\omega = O(\delta^{1/2})$. Since
\begin{multline*} 
\left(\S_D^{\frac{\omega}{v_b}}\right)^{-1}=\S_D^{-1} - \frac{\omega}{v_b}\S_D^{-1}\S_{D,1}\S_D^{-1} +\frac{\omega^2}{v_b^2}\Big(\S_D^{-1}\left(\S_{D,1}\S_D^{-1}\right)^2-\S_D^{-1}\S_{D,2}\S_D^{-1}\Big)\\
+\frac{\omega^3}{v_b^3}\Big(\S_D^{-1}\S_{D,1}\S_D^{-1}\left(\S_{D,2}-\S_{D,1}\S_D^{-1}\S_{D,1}\right)\S_D^{-1}+\S_D^{-1}\left(\S_{D,2}\S_D^{-1}\S_{D,1}-\S_{D,3}\right)\S_D^{-1}\Big)+ O(\omega^4)
 \end{multline*}
and $\left(-\frac{1}{2}I+\K_D^*\right)\S_D^{-1}\S_{D,1}=0$,  $\mathcal{A}^\omega$ can be expanded as
\begin{multline*}
\mathcal{A}^\omega = \left(-\frac{1}{2}I+\K_D^*\right)\S_D^{-1} +\frac{\omega^2}{v_b^2} \left( \K_{D,2}\S_D^{-1}-  \left(-\frac{1}{2}I+\K_D^*\right)\S_D^{-1} \S_{D,2}\S_D^{-1}\right)\\
+\frac{\omega^3}{v_b^3} \left(\K_{D,3}\S_D^{-1} -\K_{D,2}\S_D^{-1} \S_{D,1}\S_D^{-1}+\left(-\frac{1}{2}I+\K_D^*\right)\left(\S_D^{-1}\left(\S_{D,2}\S_D^{-1}\S_{D,1}-\S_{D,3}\right)\S_D^{-1}\right)\right)\\-\delta \left(\frac{1}{2}I+(\hat\K_D^{-\omega\alpha_0,\omega})^* + \omega (\K_{D,1}^{-\alpha_0})^*\right) \left(\S_D^{\omega\alpha_0,\omega}\right)^{-1} +O(\delta\omega^2+\omega^4).
\end{multline*}
Suppose $\mathcal{A}^\omega[\eta] =0$.  Then $\eta$ can be written as
\begin{equation} \label{eq:psi_basis_quasi}
\eta=q_1\chi_{\partial D_1}+q_2\chi_{\partial D_2}+O(\omega^2+\delta),
\end{equation}
for constants $q_1,q_2$ with $|q_1|+|q_2|=O(1)$. Integrating $\mathcal{A}^\omega[\eta] =0$ over $\partial D_i$, we have
\begin{align*}
-\frac{\omega^2}{v_b^2} |D_i| \delta_{i1} q_1+ \delta\left( C_{i1}^0+ {\omega} C_{i1}^{1,\alpha_0} \right)q_1 
-\frac{\omega^2}{v_b^2} |D_i| \delta_{i2} q_2   + \delta\left( C_{i2}^0+ {\omega} C_{i2}^{1,\alpha_0} \right)q_2  =O(\delta \omega^2+\omega^4).
\end{align*}
Thus it reduces to the problem 
\begin{equation}\label{eq:eval_C_quasi}
\left(\delta {C}^{0} + {\delta\omega}C^{1,\alpha_0} -\frac{\omega^2|D_1|}{v_b^2}I  \right)\begin{pmatrix}q_1\\q_2\end{pmatrix}
= O(\delta \omega^2+\omega^4).
\end{equation}
We observe that the eigenvalues $\lambda_i$ of $C^0 + \omega C^{1,\alpha_0}$ satisfy, as $\omega\to 0$, 
$$\lambda_i = \lambda_{0i} + \omega \vb_{0i}^\mathrm{T} C^{1,\alpha_0} \vb_{0i} + O(\omega^2),$$
where $\lambda_{0i},\vb_{0i}$ is an eigenpair of $C^0$. In other words, 
$$\lambda_1 = -\iu \omega w_3 L^2 + O(\omega^2), \qquad \lambda_2 = 2C_{11}^0 -\frac{\iu\omega w_3 c_3^2}{L^2}  + O(\omega^2).$$
Thus, we see that either
\begin{align*}
2\delta C_{11}^0-\frac{\iu\delta\omega w_3 c_3^2}{L^2} - \frac{\omega^2|D_1|}{v_b^2}  = O(\delta\omega^2 +\omega^4)
\end{align*}
or
\begin{align*}
-\iu  \delta\omega w_3 L^2- \frac{\omega^2|D_1|}{v_b^2} 
= O(\delta\omega^2 +\omega^4).
\end{align*}
Therefore, we have
\begin{align*}
\omega_1 &= - \frac{\iu \delta v_b^2 w_3 L^2 }{|D_1|} + O(\delta^2), \\
\omega_2& = \sqrt{\frac{2\delta v_b^2 C_{11}^0}{|D_1|}} - \frac{\iu \delta v_b^2  w_3 c_3^2}{2|D_1|L^2} + O(\delta^{3/2}),
\end{align*}
which concludes the proof.
\end{proof}

\begin{remark}
	We have proved that the imaginary part of $\omega_1$ scales like $\delta$, while the imaginary part of $\omega_2$ scales like $c_3^2\delta$, where $c_3$ was defined in \eqref{eq:c1}. From \Cref{lem:Psym}, we know that we can make $c_3$ very small by choosing a structure which is almost symmetric under $\P_3$. In particular, if the dimers are aligned with an angle $\theta$ with the $x_1x_2$-plane, then we have that $c_3 \to0$ as $\theta\to 0$. Therefore $\omega_1$ corresponds to a broad resonance peak while $\omega_2$ corresponds to a sharp peak in the transmission spectrum, and \Cref{thm:res0} characterizes the Fano-type resonance. As we will see in the following sections, this will generate a Fano-type transmission anomaly when $c_3$ is very small.
\end{remark}

\section{Metascreen scattering}
We now assume that $u^{\mathrm{in}}(x) = e^{\iu\k_+\cdot x}$ and seek the behaviour of the solution $u$ of \eqref{eq:scattering_quasi}. Throughout the remainder of this work, we will write $ f\sim g$ to denote that two continuous functions $f,g\in C(\R)$ are equal up to exponentially decaying factors, in the sense that there is some constant $K>0$ such that
$$|f(x) - g(x)| = O(e^{-Kx}) \text{ as } x\rightarrow \infty.$$
In the first radiation continuum, the scattered field $u-u^\mathrm{in}$ consists of a single propagating mode as $|x_3| \rightarrow \infty$. Then the total field $u$, which is the solution to \eqref{eq:scattering_quasi}, will behave as 
\begin{equation}\label{eq:utot}
	u \sim \begin{cases} t e^{\iu\k_+\cdot x}, & x_3\rightarrow \infty, \\ e^{\iu\k_+\cdot x} + r e^{\iu\k_-\cdot x}, & x_3\rightarrow -\infty.\end{cases}
\end{equation}
The coefficients $r$ and $t$ are the reflection and transmission coefficients, while the scattering matrix $S = S(\omega)$ is given by
$$S(\omega) = \begin{pmatrix} r & t \\ t & r \end{pmatrix}.$$

Since we are using layer potential techniques to solve the scattering problem \eqref{eq:scattering_quasi}, the next result will be our main tool to compute the radiative behaviour in the far field \cite{ammari2020exceptional}.
\begin{lemma} \label{prop:radiation}
	Assume that $|\alpha| < k < \inf_{q\in \Lambda^*\setminus \{0\} }|\alpha+q|$. Then, as $|x_3|\rightarrow \infty$, the quasiperiodic single layer potential satisfies 
	$$
	\S_D^{\alpha,k}[\varphi] \sim \begin{cases}\ds \frac{e^{\iu\k_+\cdot x}}{2\iu k_3L^2}\int_{\D} e^{-\iu\k_+\cdot y}\varphi(y)\dx \sigma(y), \quad &x_3\rightarrow \infty, \\[2em]
		\ds \frac{e^{\iu\k_-\cdot x}}{2\iu k_3L^2}\int_{\D} e^{-\iu\k_-\cdot y}\varphi(y)\dx \sigma(y), \quad &x_3\rightarrow -\infty,
	\end{cases}
	$$
	where $k_3 = \sqrt{k^2 - |\alpha|^2}$ while $\k_\pm = \left(\begin{smallmatrix}\alpha_1\\ \alpha_2\\\pm k_3\end{smallmatrix}\right)$ and $\alpha = \left(\begin{smallmatrix}\alpha_1\\ \alpha_2\end{smallmatrix}\right).$ 
\end{lemma}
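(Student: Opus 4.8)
The plan is to substitute the spectral representation \eqref{eq:specrep} of the quasiperiodic Green's function directly into the definition $\S_D^{\alpha,k}[\varphi](x) = \int_{\D} G^{\alpha,k}(x-y)\varphi(y)\de\sigma(y)$ and to isolate the single propagating mode. Writing $G^{\alpha,k}(x-y)$ via \eqref{eq:specrep}, I would separate the $q=0$ term from the evanescent sum over $q\in\Lambda^*\setminus\{0\}$, and show that the former produces the claimed leading term while the latter is absorbed by the relation $\sim$.

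First I would treat the propagating ($q=0$) term. Since $D\Subset Y$ is bounded, the coordinate $y_3$ ranges over a compact interval, so for $|x_3|$ large enough the sign of $x_3-y_3$ is constant, uniformly in $y\in\D$: one has $|x_3-y_3| = x_3-y_3$ as $x_3\to+\infty$ and $|x_3-y_3| = -(x_3-y_3)$ as $x_3\to-\infty$. In the first case this lets me factor the exponentials,
$$\frac{e^{\iu\alpha\cdot(x_1-y_1,x_2-y_2)}e^{\iu k_3|x_3-y_3|}}{2\iu k_3L^2} = \frac{e^{\iu\k_+\cdot x}}{2\iu k_3L^2}\,e^{-\iu\k_+\cdot y},$$
using $\k_+\cdot x = \alpha\cdot(x_1,x_2) + k_3x_3$ with $\k_+ = (\alpha_1,\alpha_2,k_3)$. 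Integrating against $\varphi$ then reproduces exactly the claimed leading term, and the identical computation with $|x_3-y_3| = -(x_3-y_3)$ produces $\k_-$ as $x_3\to-\infty$.

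It then remains to show the evanescent sum contributes only an exponentially decaying remainder. By hypothesis $k < \inf_{q\in\Lambda^*\setminus\{0\}}|\alpha+q|$, so $\mu := \inf_{q\neq0}\sqrt{|\alpha+q|^2-k^2} > 0$, and each term of the series carries a factor $e^{-\sqrt{|\alpha+q|^2-k^2}\,|x_3-y_3|}$. For $y\in\D$ and $|x_3|$ large one has $|x_3-y_3|\geq|x_3|-C$ for a fixed constant $C$, so each evanescent term is controlled by a constant multiple of $e^{-\mu(|x_3|-C)}$. The uniform convergence of the series on compact sets (noted after \eqref{eq:specrep}) justifies interchanging the sum with the integral against $\varphi$, whence the total evanescent contribution to $\S_D^{\alpha,k}[\varphi]$ is $O(e^{-\mu|x_3|/2})$, which is precisely the exponentially decaying error permitted by $\sim$.

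The hard part will be making this control of the evanescent sum genuinely uniform. After extracting the worst-case rate $e^{-\mu|x_3|}$, I must show that the residual lattice sum $\sum_{q\neq0}(|\alpha+q|^2-k^2)^{-1/2}e^{-(\sqrt{|\alpha+q|^2-k^2}-\mu)|x_3|}$ stays bounded, combined with the elementary Cauchy--Schwarz bound $\bigl|\int_{\D}e^{-\iu(\alpha+q)\cdot(y_1,y_2)}\varphi\,\de\sigma\bigr|\leq |\D|^{1/2}\,\|\varphi\|_{L^2(\D)}$ on each coefficient. Once the lattice sum is seen to converge uniformly for $|x_3|$ bounded away from zero, the exchange of limit, sum and integral is routine and the estimate closes.
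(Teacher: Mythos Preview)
Your proposal is correct. The paper does not give its own proof of this lemma but cites it from \cite{ammari2020exceptional}; your argument via the spectral representation \eqref{eq:specrep}, isolating the $q=0$ propagating mode and bounding the evanescent tail, is precisely the natural route, and the details you outline (the sign of $x_3-y_3$ stabilizing for large $|x_3|$, and the uniform summability of the residual lattice sum once $|x_3|$ is bounded away from zero) are all sound.
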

For later reference, we have the following asymptotic behaviour as $\omega\to0$ \cite{ammari2020exceptional}
\begin{equation}\label{eq:uinR}
	\frac{1}{2 \iu k_3L^2}\int_{\D} e^{-\iu \k_\pm\cdot y}\left(\S_D^{\omega\alpha_0,\omega}\right)^{-1}[u^{\mathrm{in}}](y)\dx \sigma(y) = 1+O(\omega),
\end{equation}
which can be used to simplify the expressions from \Cref{prop:radiation}.

\section{Embedded eigenvalues and bound states in the continuum}
\Cref{thm:res0} shows that the $O(\delta)$-imaginary part of $\omega_2$ vanishes when the structure is symmetric. In fact, we will prove in this section that the second resonance $\omega_2$ becomes exactly real under the additional assumption of perpendicular incidence. We will characterize the eigenmodes corresponding to this real resonance and show that they are bound states in the continuum. Specifically, we show that they do not radiate to the far field and, reciprocally, cannot be excited from the far field.

Throughout this section, in addition to the assumption of inversion symmetry $\P D_1=D_2$ from \eqref{inversionsymmetry} that is imposed on the resonator dimer, we will assume that each individual resonator is symmetric in the sense that $\P_3 D_i = D_i$ for $i=1,2$. For example, \Cref{fig:2D} with $\theta = 0$ satisfies this condition. In addition, we will assume that $\alpha_0 = 0$, which corresponds to the case that the incident waves are perpendicular to the metascreen.
\begin{prop} \label{prop:realeigen}
	Assume that $\P_3 D_i = D_i$ for $i=1,2$ and that $\alpha_0=0$. Then $\omega_2$ is real.
\end{prop}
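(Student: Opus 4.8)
The plan is to pose the characteristic-value problem $\A^\omega[\eta]=0$ on the symmetry sector that contains the mode of $\omega_2$, and to prove that on this sector $\A^\omega$ is holomorphic in $\omega$ with real coefficients (i.e.\ real-valued for real $\omega$). Granting this, reality of $\omega_2$ follows by conjugation: real coefficients give $\overline{\A^\omega[\eta]}=\A^{\bar\omega}[\bar\eta]$, so $\bar\omega_2$ is again a characteristic value in the same sector; since $\Re\bar\omega_2=\Re\omega_2>0$ while, by the lemma preceding \Cref{thm:res0} together with the sector splitting below, $\omega_2$ is the only subwavelength characteristic value in that sector with non-negative real part, we obtain $\bar\omega_2=\omega_2$.

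First I would fix the sector. With $\alpha_0=0$ every kernel in $\A^\omega$ comes from the radial free-space Green's function $G^{\omega/v_b}$ and the periodic Green's function $G^{0,\omega}$, and at $\alpha=0$ the latter depends only on $(x_1,x_2)$ and $|x_3|$; both are invariant under $\P$ and under $\P_3$. Combined with $\P D=D$ and $\P_3 D_i=D_i$, this makes $\A^\omega$ commute with the reflections $Q\varphi=\varphi\circ\P$ and $Q_3\varphi=\varphi\circ\P_3$, so $\A^\omega$ preserves $V:=\{\varphi\in L^2(\D):\varphi\circ\P=-\varphi,\ \varphi\circ\P_3=\varphi\}$. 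The leading profile $\chi_{\p D_1}-\chi_{\p D_2}$ of the mode of $\omega_2$ (the eigenvector $(1,-1)$ of $C^0$) lies in $V$, so the problem for $\omega_2$ may be restricted to $V$, while the symmetric combination $\chi_{\p D_1}+\chi_{\p D_2}$ carrying $\omega_1$ is split off.

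The heart of the matter is the reality of $\A^\omega|_V$, which I would establish factor by factor. The free-space factor $(-\tfrac12 I+(\K_D^{\omega/v_b})^*)(\S_D^{\omega/v_b})^{-1}$ is exactly the interior Dirichlet-to-Neumann map of $\Delta u+(\omega/v_b)^2 u=0$ on $D$: it sends $g$ to $\p_\nu u|_-$ for $u=\S_D^{\omega/v_b}[(\S_D^{\omega/v_b})^{-1}g]$, the interior Helmholtz solution with $u|_{\p D}=g$. Since this equation has real coefficients and depends only on $(\omega/v_b)^2$, the map is real for real $\omega$ (and even in $\omega$); equivalently, the odd powers in the expansion \eqref{eq:expSK} cancel, the first instance being $(-\tfrac12 I+\K_D^*)\S_D^{-1}\S_{D,1}=0$. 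For the periodic factor I would split $G^{0,\omega}$ from \eqref{eq:specrep} at $\alpha=0$ into its $\omega$-even and $\omega$-odd parts: the even part is real, while the odd part is purely imaginary with separable kernel proportional to $\cos(\omega x_3)\cos(\omega y_3)+\sin(\omega x_3)\sin(\omega y_3)$. Tested against $\varphi\in V$, the $\cos(\omega y_3)$-term drops out because $\varphi$ is $\P$-odd and $\cos(\omega y_3)$ is $\P$-even, and the $\sin(\omega y_3)$-term drops out because $\varphi$ is $\P_3$-even and $\sin(\omega y_3)$ is $\P_3$-odd; the identical cancellations remove the odd part of $(\K_D^{0,\omega})^*$ on $V$. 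Hence $\S_D^{0,\omega}|_V$ and $(\K_D^{0,\omega})^*|_V$, and therefore the periodic factor, are real for real $\omega$, so $\A^\omega|_V$ has real coefficients.

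The most delicate step is this last cancellation: the $\omega$-odd part of $G^{0,\omega}$ is precisely the radiating term that produces the $O(\delta)$ imaginary shift $\propto c_3^2$ in \Cref{thm:res0}, and showing it annihilates $V$ genuinely needs both symmetries at once --- the inversion $\P$ for the transverse/$\cos$ contribution and the reflection $\P_3$ for the $\sin$ contribution --- so neither the hypothesis $\P_3 D_i=D_i$ nor $\alpha_0=0$ can be dropped. A second point needing care is that $\S_D^{\omega/v_b}$ and $(\K_D^{\omega/v_b})^*$ are separately complex for real $\omega$; only their combination into the interior DtN map is real, so the reality of the free-space factor must be argued through that identification rather than termwise.
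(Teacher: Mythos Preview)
Your proof is correct and follows a genuinely different route from the paper's. Both arguments rest on the same two ingredients: (i) the free-space factor $\big(-\tfrac12 I+(\K_D^{\omega/v_b})^*\big)\big(\S_D^{\omega/v_b}\big)^{-1}$ is the interior Dirichlet-to-Neumann map and hence real for real $\omega$, and (ii) the imaginary (radiating) part of the periodic Green's function $G^{0,\omega}$, whose kernel is $\cos(\omega|x_3-y_3|)/(2\omega L^2)$, annihilates the antisymmetric sector. The paper exploits (i)--(ii) to prove only that the scalar $\langle \A^\omega[\eta_0],\eta_0\rangle$ is real, and then feeds this into Gohberg--Sigal characteristic-value perturbation theory: it introduces an auxiliary operator $\A_0^\omega$ with the explicit real characteristic value $\omega_0=\sqrt{2\delta v_b^2 C_{11}^0/|D_1|}$, writes $\omega_2-\omega_0$ as a residue series via the pole--pencil decomposition of $(\A_0^\omega)^{-1}$, and checks that every term is real. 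You instead promote (i)--(ii) to the operator statement that $\A^\omega|_V$ is real-analytic, and then conclude directly by Schwarz reflection and uniqueness in the sector. Your route is shorter and avoids the perturbative machinery entirely; the paper's route is heavier but more computational in flavour and does not require isolating $\omega_2$ by a sector argument (it perturbs from an explicit $\omega_0$). A small cosmetic difference: the paper kills the imaginary kernel in one step using $\P_{12}$-oddness (the kernel depends only on $y_3$, so is $\P_{12}$-even), whereas you split $\cos(\omega(x_3-y_3))=\cos\cos+\sin\sin$ and use $\P$ and $\P_3$ separately; on $V$ these are equivalent since $\P_{12}=\P\P_3$.
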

\begin{proof}
We define $\C: L^2(\p D) \to L^2(\p D)$ as 
$$\C[\varphi] = -\chi_{\p D_1}\int_{\p D_1}\S_0^{\alpha_0}[\varphi]\dx \sigma - \chi_{\p D_2}\int_{\p D_2}\S_0^{\alpha_0}[\varphi]\dx \sigma,$$
and then we define
$$\A_0^\omega = \left(-\frac{1}{2}I+\K_D^*\right)\S_D^{-1} -\frac{\omega^2|D_1|}{v_b^2} I  + \delta \C.$$
It is clear that $\A_0^\omega$ has a characteristic value given by
$$\omega_{0} = \sqrt{\frac{2\delta v_b^2 C_{11}^0}{|D_1|}},$$
and that $\eta_0 = \frac{1}{\sqrt{2|D_1|}}\left(\chi_{\p D_1}-\chi_{\p D_2}\right)$ spans the kernels
$$\ker\left(\A_0^{\omega_{0}}\right) = \mathrm{span}\left\{\eta_{0}\right\} \quad\text{and}\quad \ker\left((\A_0^{\omega_{0}})^*\right) = \mathrm{span}\left\{\eta_{0}\right\}.$$
By arguments analogous to those used in \textit{e.g.} \cite{ammari2020honeycomb,ammari2019encapsulated}, we have the following pole-pencil decomposition for $\omega$ close to $\omega_0$,
$$\left(\A_0^\omega\right)^{-1} = \frac{L}{\omega-\omega_0} + R(\omega), \qquad L = \frac{\langle \cdot,\eta_0 \rangle\eta_0 }{\langle \frac{\mathrm{d}}{\mathrm{d}\omega}\A_0^{\omega_0}[\eta_0],\eta_0\rangle},$$
where $\langle \cdot, \cdot\rangle$ denotes the $L^2(\p D)$-inner product and $R(\omega)$ is a holomorphic function of $\omega$ in a neighbourhood of $\omega_{0}$ satisfying 
$$R(\omega)[\eta_0] = r(\omega)\eta_0,$$
for some function $r(\omega)$ which is real-valued for real $\omega$. We have 
$$\left\langle \frac{\mathrm{d}}{\mathrm{d}\omega}\A_0^{\omega_0}[\eta_0],\eta_0\right\rangle =- \frac{2\omega_0|D_1|}{v_b^2}.$$
By the characteristic value perturbation theory, as in the proof of \cite[Theorem 3.9]{ammari2009layer}, we have that 
$$\omega_2-\omega_{0} = \frac{\mbox{tr}}{2\pi \iu}\sum_{p=1}^\infty \frac{1}{p}\int_{\p V} \left((\A_0^\omega)^{-1} (\A_0^\omega-\A^\omega)   \right)^p \dx\omega.$$
Using the property that $\mbox{tr}\int_{\p V} AB \dx \omega = \mbox{tr}\int_{\p V}BA\dx \omega$ for finitely meromorphic operators $A$ and $B$ in a neighbourhood of $\omega_0$ \cite[Proposition 1.7]{ammari2018mathematical}, we have
\begin{align}\label{eq:pert}
	\omega_2-\omega_{0} &= \frac{\mbox{tr}}{2\pi \iu}\sum_{p=1}^\infty\sum_{q=1}^{p-1} \frac{1}{p}\binom{p}{q}\int_{\p V}\frac{  L^{p-q}R^q}{(\omega-\omega_0)^{p-q}}  \dx\omega \nonumber \\
	&= \frac{1}{2\pi \iu}\sum_{p=1}^\infty\sum_{q=0}^{p-1} \frac{(-1)^{p-q}}{p}\binom{p}{q}\left(\frac{v_b}{2\omega_0|D_1|}\right)^{p-q} \int_{\p V}\frac{ r(\omega)^q  \langle (\A_0^\omega-\A^\omega)[\eta_0],\eta_0 \rangle^{p-q}}{(\omega-\omega_0)^{p-q}}  \dx\omega \nonumber \\
	&= \sum_{p=1}^\infty\sum_{q=0}^{p-1} \frac{1}{(p-q-1)!p}\binom{p}{q}\left(\frac{v_b}{2\omega_0|D_1|}\right)^{p-q} \frac{\mathrm{d}^{p-q-1}}{\mathrm{d} \omega ^{p-q-1}} \Big(r(\omega)^q  \langle \A^\omega[\eta_0],\eta_0 \rangle^{p-q}\Big)\Big|_{\omega=\omega_0}.
\end{align}
Next, we will show that, under the symmetry assumptions $\P_3 D_i = D_i$ and $\alpha_0=0$, the factor $\langle \A^\omega[\eta_0],\eta_0 \rangle$ is real for all $\omega\in\R$. This, together with \eqref{eq:pert}, shows that $\omega_2$ is real.

We write $\Im\langle \A^\omega[\eta_0],\eta_0 \rangle = \I_1 + \I_2$, where 
\begin{align*}
	\I_1 &= \Im\left\langle \left( -\frac{1}{2} I +(\K_D^{\frac{\omega}{v_b}})^* \right) \left(\S_D^{\frac{\omega}{v_b}}\right)^{-1}[\eta_0],\eta_0\right\rangle, \\
	\I_2 &= - \delta \Im\left\langle \left( \frac{1}{2} I + (\K_D^{-\omega\alpha_0,\omega})^*\right)
	\left(\S_D^{\omega\alpha_0,\omega}\right)^{-1} [\eta_0],\eta_0\right\rangle.
\end{align*}
We begin by studying $\I_1$. Let $v$ be the solution to
$$
\left\{
\begin{array} {ll}
	\ds \Delta {v}+ \frac{\omega^2}{v_b^2} {v}  = 0 & \text{in } D, \\
	\nm
	\ds v  = \eta_0  & \text{on } \partial D.
\end{array}
\right.
$$
Then $\tfrac{\p v}{\p \nu}$ is real-valued and it follows that 
$$\left\langle \left( -\frac{1}{2} I +(\K_D^{\frac{\omega}{v_b}})^* \right) \left(\S_D^{\frac{\omega}{v_b}}\right)^{-1}[\eta_0],\eta_0\right\rangle = \int_{\p D_1}\frac{\p v}{\p \nu} \dx \sigma - \int_{\p D_2}\frac{\p v}{\p \nu} \dx \sigma$$
is real-valued. Hence $\I_1 = 0$.

We next turn to $\I_2$. We observe that 
$$\overline{G^{0,k}}(x) = G^{0,k}(x) - \frac{\cos\left( k|x_3|\right)}{\iu kL^2}.$$
We define $\overline{\S_D^{0,\omega}}$ and $\overline{(\K_D^{0,\omega})^*}$ as the operators corresponding to the kernel $\overline{G^{0,k}}$. Then, for functions $\varphi$ which are such that $\varphi(\P_{12} x) = -\varphi(x)$ we have that
\begin{equation}\label{eq:symSK}
	\overline{\S_D^{0,\omega}}[\varphi]= {\S_D^{0,\omega}}[\varphi], \qquad \overline{(\K_D^{0,\omega})^*}[\varphi] = {(\K_D^{0,\omega})^*}[\varphi].
\end{equation}
Denote $\psi = \left(\S_D^{0,\omega}\right)^{-1}[\eta_0]$. Since
\begin{equation}\label{eq:Ssym}
	\S_D^{0,\omega}[\P_{12}\psi](x) = \S_D^{0,\omega}[\psi](\P_{12} x) = -\S_D^{0,\omega}[\psi](x),
\end{equation}
we conclude that $\psi(\P_{12} x) = -\psi(x)$. Therefore, using \eqref{eq:symSK} and since $\eta_0$ is real-valued, we have that 
$$\eta_0 = \overline{\S_D^{0,\omega}}[\overline{\psi}] = {\S_D^{0,\omega}}[\overline{\psi}],$$
which means that $\overline{\psi} = \psi$. Again using \eqref{eq:symSK}, we therefore have
$$\overline{\left( \frac{1}{2} I + (\K_D^{0,\omega})^*\right)[\psi]} = \left( \frac{1}{2} I + (\K_D^{0,\omega})^*\right)[\psi],$$
from which we can see that
$$\mathcal{I}_2=0.$$

To conclude, we have proved that $\langle \A^\omega[\eta_0],\eta_0 \rangle$ is real for all real $\omega$. This, together with \eqref{eq:pert}, proves that $\omega_2$ is real.
\end{proof}	
Next, we study the eigenmodes at $\omega_2$, and demonstrate that they correspond to bound states in the continuum. We first show that the eigenmodes at $\omega_2$ are exponentially decaying functions of $x_3$, meaning that they do not radiate energy into the far field. 
\begin{prop}\label{prop:rad0}
Assume that $\P_3 D_i = D_i$ for $i=1,2$ and that $\alpha_0=0$. Let $u$ be a solution to \eqref{eq:scattering_quasi} with $u^\mathrm{in}=0$ and $\omega=\omega_2$. For sufficiently small $\delta$, we have as $x_3\to \pm \infty$ that
$$u \sim 0.$$
\end{prop}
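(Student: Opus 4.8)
The plan is to represent the eigenmode through a single layer potential, read off its far-field amplitude using \Cref{prop:radiation}, and then annihilate that amplitude using the reflection symmetry $\P_{12}$.

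First I would note that, since $u^{\mathrm{in}}=0$ and $\alpha=\omega\alpha_0=0$, the representation \eqref{eq:rep_quasi} reduces to $u=\S_D^{0,\omega_2}[\psi]$ in $Y\setminus\overline D$, where $\psi=(\S_D^{0,\omega_2})^{-1}[\eta]$ and $\eta$ spans $\ker\A^{\omega_2}$. As $\omega_2$ is real, positive and small, we have $|\alpha|<\omega_2<\inf_{q\in\Lambda^*\setminus\{0\}}|\alpha+q|$, so \Cref{prop:radiation} applies with $k_3=\omega_2$ and $\k_\pm=(0,0,\pm\omega_2)$. It then tells us that as $x_3\to\pm\infty$ the field $u$ is, up to exponentially small terms, a multiple of $e^{\iu\k_\pm\cdot x}$ whose coefficient is proportional to $\int_{\D}e^{-\iu\k_\pm\cdot y}\psi(y)\de\sigma(y)$. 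The whole statement therefore reduces to showing that these two integrals vanish.

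The core of the argument is to show that $\psi$ is odd under $\P_{12}$, i.e. $\psi(\P_{12}y)=-\psi(y)$. Since $\P=\P_{12}\P_3$, the hypotheses $\P D_1=D_2$ and $\P_3 D_i=D_i$ give $\P_{12}D_1=D_2$, so that $D$ is $\P_{12}$-invariant. Both the free-space kernel $G^{\omega/v_b}$ and the periodic kernel $G^{0,k}$ are invariant under $\P_{12}$ (the lattice is fixed by $(m_1,m_2)\mapsto(-m_1,-m_2)$ and $|\P_{12}x|=|x|$), so every operator building $\A^\omega$ commutes with $\P_{12}$; this is the equivariance already exploited in \eqref{eq:Ssym}. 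Hence $\A^\omega$ preserves the splitting of $L^2(\p D)$ into its $\P_{12}$-even and $\P_{12}$-odd parts. To leading order in $\delta$ the eigenmode at $\omega_2$ is proportional to $\eta_0=\tfrac{1}{\sqrt{2|D_1|}}(\chi_{\p D_1}-\chi_{\p D_2})$, which is $\P_{12}$-odd, while the even sector carries only the distinct resonance $\omega_1$; thus $\ker\A^{\omega_2}$ lies entirely in the odd subspace and $\P_{12}\eta=-\eta$. Because $(\S_D^{0,\omega_2})^{-1}$ also commutes with $\P_{12}$, this transfers to $\P_{12}\psi=-\psi$.

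To finish, I would observe that with $\alpha=0$ the weight $e^{-\iu\k_\pm\cdot y}=e^{\mp\iu\omega_2 y_3}$ depends on $y_3$ only and is thus $\P_{12}$-invariant, so the integrand $e^{-\iu\k_\pm\cdot y}\psi(y)$ is $\P_{12}$-odd; since $\p D$ is a $\P_{12}$-invariant set and $\P_{12}$ is an isometry, the substitution $y\mapsto\P_{12}y$ yields $\int_{\D}e^{-\iu\k_\pm\cdot y}\psi\de\sigma=0$. By \Cref{prop:radiation} both propagating amplitudes vanish, giving $u\sim 0$. The step I expect to be most delicate is the preceding one: establishing that the \emph{exact} eigenmode, rather than just its leading-order approximation $\eta_0$, is genuinely $\P_{12}$-odd. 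This rests on the exact $\P_{12}$-equivariance of $\A^\omega$ (which in turn uses $\P_{12}D=D$) together with the fact that the even and odd sectors carry the distinct resonances $\omega_1$ and $\omega_2$, so that the $\omega_2$-branch cannot leak into the even subspace.
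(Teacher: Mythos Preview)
Your proposal is correct and follows essentially the same route as the paper: represent $u$ via the periodic single layer potential, invoke \Cref{prop:radiation} to reduce to the vanishing of $\int_{\p D}e^{-\iu\k_\pm\cdot y}\psi\de\sigma$, and kill this integral by showing that $\psi$ is $\P_{12}$-odd while the weight is $\P_{12}$-even. The only cosmetic difference is in the justification of exact $\P_{12}$-oddness of the eigenmode: the paper argues that $\P_{12}\eta$ lies in $\ker\A^{\omega_2}$ (at quasiperiodicity $-\alpha_0=0=\alpha_0$) and uses simplicity of the characteristic value to get $\P_{12}\eta=K\eta$ with $K=-1$ from the leading order, whereas you phrase it as $\A^\omega$ preserving the parity splitting with the two sectors carrying the distinct resonances $\omega_1,\omega_2$; these are equivalent formulations of the same fact.
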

\begin{proof}
	Let $\A^{\omega_2}[\eta] = 0$. From the $\P_3$-symmetry follows that $\P_{12}\eta$ is in the kernel of $\A^{\omega_2}$ at $-\alpha_0$. Since $\alpha_0=0$, and since $\omega_2$ is a simple characteristic value, we find that
	$$\P_{12}\eta = K\eta$$
	for some constant $K$. For small $\delta$ we know that $\eta = \chi_{\p D_1} - \chi_{\p D_2} + O(\delta)$. We conclude that $K  = -1$, or in other words that $\eta$ is odd under $\P_{12}$ (we emphasize that this holds exactly, \textit{i.e.} that also the $O(\delta)$-term of $\eta$ is odd). Then, as $x_3\rightarrow \pm \infty$, we have
	$$u \sim \frac{e^{\iu\k_\pm\cdot x}}{2\iu k_3L^2}\int_{\D} e^{-\iu\k_\pm\cdot y}\left(\S_D^{0,\omega}\right)^{-1}[\eta](y)\dx \sigma(x).$$
	From the symmetry, we have that $\left(\S_D^{0,\omega}\right)^{-1}[e^{-\iu\k_\pm\cdot y}]$ is an odd function under $\P_{12}$ (as was the case in \eqref{eq:Ssym}). Since $\alpha_0 =0$, the functions $e^{-\iu\k_\pm\cdot y}$ are even, so $u\sim0$ which proves the claim.
\end{proof}

The next result (which is the reciprocal result of \Cref{prop:rad0}) shows that there will be no transmission peak at $\omega = \omega_2$ (corresponding to the fact that the bound state in the continuum cannot be excited from the far field). We remark, however, that there might be some small but nonzero transmission $t=O(\delta^{1/2})$ originating from the first, broad resonance.
\begin{prop}\label{prop:trans0}
	Assume that $\P_3 D_i = D_i$ for $i=1,2$ and that $\alpha_0=0$. Then, at $\omega = \omega_2$, the scattering matrix $S(\omega)$ satisfies 
	$$S(\omega_2)= -\begin{pmatrix} 1  & 0 \\ 0 & 1\end{pmatrix} + O(\delta^{1/2}).$$	
\end{prop}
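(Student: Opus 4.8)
The plan is to extract $r$ and $t$ from the far-field asymptotics of the representation \eqref{eq:rep_quasi}, and then to show that the resonant mode at $\omega_2$, being $\P_{12}$-odd, decouples from the far field, so that $r$ and $t$ are governed by a non-resonant even contribution that can be computed explicitly. Since $\alpha_0=0$ we have $u^{\mathrm{in}}=e^{\iu\omega x_3}$ and $\k_\pm=(0,0,\pm k_3)$ with $k_3=\omega$. Applying \Cref{prop:radiation} to $\S_D^{\omega\alpha_0,\omega}[\psi]$ in $u=u^{\mathrm{in}}+\S_D^{\omega\alpha_0,\omega}[\psi]$ and matching with \eqref{eq:utot} gives
\[
t = 1 + \frac{1}{2\iu k_3 L^2}\int_{\p D} e^{-\iu\k_+\cdot y}\psi(y)\de\sigma(y), \qquad r = \frac{1}{2\iu k_3 L^2}\int_{\p D} e^{-\iu\k_-\cdot y}\psi(y)\de\sigma(y),
\]
where $\psi=(\S_D^{0,\omega})^{-1}[\eta]$ and $\eta$ solves $\A^\omega[\eta]=F[u^{\mathrm{in}}]$. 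The task thus reduces to determining $\eta$ at $\omega=\omega_2$.

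The second step uses the $\P_{12}$-symmetry. Since $\P_3 D_i=D_i$ and \eqref{inversionsymmetry} give $\P_{12}D_1=D_2$, at $\alpha_0=0$ the operator $\A^\omega$ commutes with $\P_{12}$ and is block diagonal with respect to the splitting of $L^2(\p D)$ into $\P_{12}$-even and $\P_{12}$-odd functions. By \Cref{prop:rad0} the kernel of $\A^{\omega_2}$ is spanned by the odd mode $\eta_0=\chi_{\p D_1}-\chi_{\p D_2}$, so the even block is invertible for small $\delta$ (its reduced scalar being $-2\delta C_{11}^0+O(\delta^{3/2})\neq0$, the even resonance $\omega_1=O(\delta)$ lying far from $\omega_2=O(\delta^{1/2})$). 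As $u^{\mathrm{in}}=e^{\iu\omega x_3}$, and hence $F[u^{\mathrm{in}}]$, is $\P_{12}$-even, there is an even solution $\eta$. The full $\eta$ is determined only up to $\eta_0$, but this does not affect the far field: the test functions $e^{-\iu\k_\pm\cdot y}=e^{\mp\iu k_3 y_3}$ are even, so the odd part of $\psi$---which by \Cref{prop:rad0} does not radiate---integrates to zero against them. Thus $r$ and $t$ depend only on the even part of $\psi$.

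For the third step I project $\A^\omega[\eta]=F[u^{\mathrm{in}}]$ onto the even channel by integrating over $\p D$, writing $\eta=q\,\chi_{\p D}+O(\delta^{1/2})$ by evenness and \eqref{eq:psi_basis_quasi}. Using the $\A^\omega$-expansion and the reduced matrix $\delta C^0+\delta\omega C^{1,\alpha_0}-\tfrac{\omega^2|D_1|}{v_b^2}I$ from the proof of \Cref{thm:res0}, the even component of $\int_{\p D_i}\A^{\omega_2}[\eta]$ reduces to $-\tfrac{\omega_2^2|D_1|}{v_b^2}q=-2\delta C_{11}^0\,q+O(\delta^{3/2})$ (the $C^0$-term vanishes since $(1,1)^{\mathrm T}$ lies in its kernel, and the $\delta\omega C^{1,\alpha_0}$-term is $O(\delta^{3/2})$). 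On the right, $\K_{D,1}=0$ together with \Cref{lem:trick_layer}(i)--(ii) reduces $\int_{\p D_i}F[u^{\mathrm{in}}]$ to $\tfrac{\omega_2^2}{v_b^2}|D_i|=2\delta C_{11}^0+O(\delta^{3/2})$. Dividing by $2\delta C_{11}^0$ gives $q=-1+O(\delta^{1/2})$, i.e.\ $\eta=-\chi_{\p D}+O(\delta^{1/2})$. Since $u^{\mathrm{in}}|_{\p D}=1+O(\delta^{1/2})$ and $(\S_D^{0,\omega})^{-1}$ is bounded (\Cref{lem:holo}), this means $\psi=-(\S_D^{0,\omega})^{-1}[u^{\mathrm{in}}]+\rho$ with $\|\rho\|_{L^2}=O(\delta^{1/2})$.

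Finally, I substitute into the far-field formulas and invoke \eqref{eq:uinR} at $\alpha_0=0$, which gives $\tfrac{1}{2\iu k_3L^2}\int_{\p D}e^{-\iu\k_\pm\cdot y}(\S_D^{0,\omega})^{-1}[u^{\mathrm{in}}]\de\sigma=1+O(\delta^{1/2})$, so the leading contribution to $r$ and to $t-1$ is $-1$. This yields $t=O(\delta^{1/2})$, $r=-1+O(\delta^{1/2})$, and hence $S(\omega_2)=-I+O(\delta^{1/2})$. The main obstacle is the error control in this last step: because the prefactor $\tfrac{1}{2\iu k_3L^2}$ is $O(\delta^{-1/2})$, the naive $L^2$-bound on $\rho$ is insufficient, and one must show that $\tfrac{1}{2\iu k_3L^2}\int_{\p D}e^{-\iu\k_\pm\cdot y}\rho\,\de\sigma$ is still $O(\delta^{1/2})$. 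This follows from the mean-zero property of the leading term of $(\S_D^{0,\omega})^{-1}$ (equivalently, the $\omega^{-1}$ structure of the Green's function in \eqref{eq:Gexp}), which forces $\int_{\p D}\rho\,\de\sigma=O(\delta)$ and makes the functional $\varphi\mapsto\tfrac{1}{2\iu k_3L^2}\int_{\p D}e^{-\iu\k_\pm\cdot y}(\S_D^{0,\omega})^{-1}[\varphi]\,\de\sigma$ uniformly bounded as $\omega\to0$; this is precisely the cancellation underlying \eqref{eq:uinR}. Alternatively, one can compute $\int_{\p D}\psi\,\de\sigma=O(\omega)$ directly from $\mathbf 1^{\mathrm T}C^{1,0}\mathbf 1=-2\iu L^2$ (using $w_3=1$), bypassing \eqref{eq:uinR} entirely.
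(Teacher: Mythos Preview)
Your proof is correct and follows essentially the same route as the paper's: decompose $\eta$ into $\P_{12}$-even and $\P_{12}$-odd parts, use \Cref{prop:rad0} to discard the odd (kernel) contribution from the far field, and reduce the even part to a scalar equation via the capacitance-matrix expansion to conclude that the even component equals $-\chi_{\p D}$ (equivalently $-u^{\mathrm{in}}$) up to $O(\delta^{1/2})$. The only cosmetic difference is bookkeeping: the paper writes the even part as $\eta_0 - u^{\mathrm{in}}$ with $\eta_0 = K\chi_{\p D}+O(\delta)$ and shows $K=O(\delta^{1/2})$, whereas you write it as $q\chi_{\p D}+O(\delta^{1/2})$ and show $q=-1+O(\delta^{1/2})$; since $u^{\mathrm{in}}=\chi_{\p D}+O(\omega)$ these are the same with $K=q+1$. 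Your final paragraph is more explicit than the paper about why the $O(\delta^{1/2})$ remainder in $\eta$ survives the $O(\omega^{-1})$ prefactor in the radiation formula---the paper absorbs this into the citation of \eqref{eq:uinR}, while you spell out that the leading term $\S_0^{0}$ of $(\S_D^{0,\omega})^{-1}$ has mean-zero range, forcing $\int_{\p D}\rho\,\dx\sigma=O(\omega\|\rho\|)$ and making the far-field functional uniformly bounded.
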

\begin{proof}
	Let $\A^{\omega_2}[\eta] = F[u^\mathrm{in}]$. Even though the equation is not uniquely solvable, the scattering matrix is well-defined since (by \Cref{prop:rad0}) any $\eta_{\mathrm{k}} \in \ker\A^{\omega_2}$ corresponds to an exponentially decaying part of the solution. We can decompose the solution $\eta$  as 
	$$\eta = \eta_0 + \eta_{\mathrm{k}}-u^\mathrm{in},$$
	where $\P_{12}\eta_0 = \eta_0$ and $\eta_{\mathrm{k}}$ is an element in $\ker\A^{\omega_2}$. Then, as in the proof of \Cref{thm:res0} we can show that
	$$\eta_0 = K\chi_{\p D} + O(\delta),$$
	for some constant $K$. From \eqref{eq:uinR} we have 
	\begin{align} \label{eq:radsym}
		u-u^{\mathrm{in}} &\sim \frac{e^{\iu\k_\pm\cdot x}}{2\iu k_3L^2}\int_{\D} e^{-\iu\k_\pm\cdot y}\left(\S_D^{0,\omega}\right)^{-1}[\eta](y)\dx \sigma(y) \nonumber \\
		&\sim \left(K-1+O(\delta^{1/2})\right)e^{\iu\k_\pm\cdot x}.
	\end{align}
	As in the proof of \Cref{thm:res0} (\textit{cf.} also the proof of \Cref{prop:sol} below), we find from $\A^{\omega_2} [\eta_0 - u^\mathrm{in}] = F[u^\mathrm{in}]$ that 
		$$K\left({C}^{0}  - \lambda_2^0 I  \right)\begin{pmatrix}1\\1\end{pmatrix}
	= O(\delta^{1/2}),
	$$	
	and since $\left(\begin{smallmatrix} 1\\1 \end{smallmatrix}\right)$ is not in the kernel of $(C^0-\lambda_2^0I)$ we find that $K = O(\delta^{1/2})$. The expression for $S$ then follows from \eqref{eq:radsym}.
\end{proof}

\begin{remark}
	It is enlightening to compare and contrast the behaviour at real resonances in the present case to the non-Hermitian case studied in \cite{ammari2020exceptional}. In the present case, \Cref{prop:rad0} and \Cref{prop:trans0} show that the corresponding eigenmodes are bound states in the continuum. Consequently, there is no transmission peak at the resonant frequency. Similarly to the present case, the resonances may also become real in the non-Hermitian case. However, in this case it is possible that the corresponding modes indeed couple to the far field. This gives a singularity of the transmitted field at the resonant frequency, corresponding to so-called extraordinary transmission \cite{ammari2020exceptional}.
\end{remark}

\begin{remark}
	 When the symmetry is broken, the real eigenvalue $\omega_2$ will be shifted into the complex plane and the corresponding mode will be coupled to the far field. We emphasize that the symmetry can be broken in two distinct fashions: either by making $\alpha_0 \neq 0$ or by perturbing the $\P_3$-symmetry of $D$ to make $c_3$ nonzero (\textit{e.g.} by choosing $\theta \neq 0$ in \Cref{fig:2D}). In order to achieve Fano-type transmission anomalies, we should design the system so that the two resonances interfere; in particular, we should make the imaginary part of $\omega_1$ rather large. In view of \Cref{thm:res0}, we observe that we want $w_3$ large. Therefore, we chose to break the symmetry by making $c_3$ nonzero, and in the next section we will compute the scattering matrix and demonstrate the Fano-type transmission anomaly.
\end{remark}
	
	\section{Fano-type transmission anomaly}
	In this section we compute the scattering matrix of the metascreen. The goal is to demonstrate an asymmetric transmission peak around the second resonance $\omega_2$, which is characteristic of a Fano-type transmission anomaly.
	
	We will begin with a characterization of the solution to the scattering problem \eqref{eq:scattering_quasi}. We define the functions 
	$$S_j^{\alpha,\omega}(x) = \begin{cases}
		\S_{D}^{\alpha,\omega}[\psi_j^{0} + \omega \psi_j^{1,\alpha_0}](x), & x\in \R^3 \setminus \overline\C,\\[0.3em]
		\S_{D}^{\frac{\omega}{v_b}}[\psi_j](x), & x\in \C,
	\end{cases}
	$$
	where $\psi_j = \S_D^{-1}[\chi_{\p D_j}]$. We then have the following result (which generalizes \cite[Proposition 3.14]{ammari2020exceptional} to the present setting).
	\begin{prop}\label{prop:sol}
		Assume that $c_3 \neq 0$ and that $\omega$ is real with $0\leq\omega \leq K\sqrt{\delta}$ for some constant $K>0$. Then, as $\delta \to 0$,
		\begin{equation}\label{eq:scattered}
		u - u^{\mathrm{in}} = q_1S_1^{\alpha,\omega} + q_2 S_2^{\alpha,\omega} - \S_D^{\alpha,\omega}\left(\S_D^{\alpha,\omega}\right)^{-1}[u^{\mathrm{in}}] + O(\omega^2),
	\end{equation}
		where $q:=\left(\begin{smallmatrix}
			q_1 \\ q_2
		\end{smallmatrix}\right)$ satisfies the problem
		$$\left({C}^{0} + {\omega}C^{1,\alpha_0} -\frac{\omega^2|D_1|}{\delta v_b^2}I  \right)\begin{pmatrix}q_1\\q_2\end{pmatrix}
		= -\begin{pmatrix}p_1\\p_2\end{pmatrix} +O(\delta), \qquad p_i = \int_{\D_i}\left({\S}_D^{\omega\alpha_0,\omega}\right)^{-1}[u^{\mathrm{in}}]\de\sigma.
		$$	
	\end{prop}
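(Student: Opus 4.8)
The plan is to treat this as the forced analogue of \Cref{thm:res0}: I would solve the single boundary-integral equation \eqref{int_eq_single}, $\A^\omega[\eta]=F[u^{\mathrm{in}}]$ with $\eta=\S_D^{\alpha,\omega}[\psi]$, by reducing it to a $2\times2$ linear system for the coefficients $q_1,q_2$, the only new feature relative to \Cref{thm:res0} being the nonzero right-hand side $F[u^{\mathrm{in}}]$. The hypothesis $c_3\neq0$ enters precisely here: by \Cref{thm:res0} it makes $\Im\omega_2$ strictly negative of order $\delta$, so for real $\omega\in[0,K\sqrt\delta]$ the operator $\A^\omega$ (equivalently the reduced matrix) never degenerates and $\eta$ is well defined.

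First I would absorb the incident field by setting $\eta=\eta_q-u^{\mathrm{in}}$. Using the explicit forms of $\A^\omega$ and $F$, the two occurrences of $\left(-\tfrac12I+(\K_D^{\omega/v_b})^*\right)\left(\S_D^{\omega/v_b}\right)^{-1}[u^{\mathrm{in}}]$ cancel, leaving the cleaner equation
$$\A^\omega[\eta_q]=\delta\left(\ddp{u^{\mathrm{in}}}{\nu}-\left(\tfrac12I+(\K_D^{-\alpha,\omega})^*\right)\left(\S_D^{\alpha,\omega}\right)^{-1}[u^{\mathrm{in}}]\right),$$
whose right-hand side is $O(\delta)$. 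Since the leading term of $\A^\omega$ is $\left(-\tfrac12I+\K_D^*\right)\S_D^{-1}$, whose kernel is spanned by the equilibrium densities $\psi_j=\S_D^{-1}[\chi_{\p D_j}]$, this forces $\eta_q=q_1\chi_{\p D_1}+q_2\chi_{\p D_2}+O(\omega^2+\delta)$ exactly as in the proof of \Cref{thm:res0}.

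Next I would insert this ansatz, reuse the operator expansions of $\A^\omega$ already recorded in the proof of \Cref{thm:res0}, and integrate over each $\p D_i$. The left-hand side reproduces verbatim the matrix $\delta C^0+\delta\omega C^{1,\alpha_0}-\tfrac{\omega^2|D_1|}{v_b^2}I$ acting on $(q_1,q_2)^{\mathrm T}$, where $|D_i|=|D_1|$ by the inversion symmetry \eqref{inversionsymmetry}. On the right-hand side, $\delta\int_{\p D_i}\ddp{u^{\mathrm{in}}}{\nu}\,\de\sigma=-\delta\omega^2\int_{D_i}u^{\mathrm{in}}\,\de x=O(\delta^2)$, while \Cref{lem:intK} gives $\int_{\p D_i}\left(\tfrac12I+(\hat\K_D^{-\alpha,\omega})^*\right)[\varphi]\,\de\sigma=\int_{\p D_i}\varphi\,\de\sigma$; taking $\varphi=(\S_D^{\alpha,\omega})^{-1}[u^{\mathrm{in}}]$ identifies this contribution as $-\delta p_i$. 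The remaining $\omega(\K_{D,1}^{-\alpha_0})^*$-term is controlled by the second identity of \Cref{lem:intK} together with $\int_{\p D}\varphi\,\de\sigma=O(\omega)$ (since $\psi_1^0=-\psi_2^0$ by \Cref{lem:cap0}), hence is $O(\delta^2)$. Dividing through by $\delta$ and using $\omega=O(\delta^{1/2})$ to fold the leftover terms into $O(\delta)$ yields exactly the stated system for $q$.

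Finally, the representation \eqref{eq:scattered} falls out of the density expansion: from $\eta=\eta_q-u^{\mathrm{in}}$ and $\eta_q=q_1\chi_{\p D_1}+q_2\chi_{\p D_2}+O(\omega^2+\delta)$, the exterior density is $\psi=(\S_D^{\alpha,\omega})^{-1}[\eta]=q_1(\psi_1^0+\omega\psi_1^{1,\alpha_0})+q_2(\psi_2^0+\omega\psi_2^{1,\alpha_0})-(\S_D^{\alpha,\omega})^{-1}[u^{\mathrm{in}}]+O(\omega^2)$ by \eqref{eq:psi0} (here $\delta=O(\omega^2)$ absorbs the $\delta$-remainder); applying $\S_D^{\alpha,\omega}$ and recognising $\S_D^{\alpha,\omega}[\psi_j^0+\omega\psi_j^{1,\alpha_0}]=S_j^{\alpha,\omega}$ gives the claim, with the interior handled analogously. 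I expect the main obstacle to be the uniform error bookkeeping after dividing by $\delta$: every expansion must be carried one order beyond \Cref{thm:res0}, and one must verify that the $O(\delta)$ remainder stays uniform on $0\le\omega\le K\sqrt\delta$ even where $q$ grows large near the (complex) resonance $\omega_2$ and the reduced matrix is nearly singular.
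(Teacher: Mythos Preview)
Your proposal is correct and follows essentially the same route as the paper. The paper does not introduce the intermediate variable $\eta_q$ explicitly but writes directly $\eta=q_1\chi_{\partial D_1}+q_2\chi_{\partial D_2}-u^{\mathrm{in}}+O(\omega^2+\delta)$, which amounts to the same substitution; for the right-hand side it interprets $\left(-\tfrac12I+(\K_D^{\omega/v_b})^*\right)\left(\S_D^{\omega/v_b}\right)^{-1}$ as the interior Dirichlet-to-Neumann map (so that $\int_{\partial D_i}F[u^{\mathrm{in}}]\,\de\sigma=\tfrac{\omega^2}{v_b^2}\int_{D_i}u^{\mathrm{in}}\,\de x+O(\delta^2)$), whereas you reach the same end via \Cref{lem:intK}. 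One small slip: in your final paragraph you assert ``$\delta=O(\omega^2)$ absorbs the $\delta$-remainder'', but on $0\le\omega\le K\sqrt\delta$ the inequality goes the other way ($\omega^2=O(\delta)$), so the honest remainder in \eqref{eq:scattered} is $O(\omega^2+\delta)$; the paper's own proof carries the same $O(\omega^2+\delta)$ density error and is equally loose in quoting it as $O(\omega^2)$.
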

	\begin{proof}
		 We solve the equation 
		\begin{equation}
			\label{eq:eq}
			\mathcal{A}^\omega [\eta] =  \delta\ddp{u^{\mathrm{in}}}{\nu}- \left(- \frac{1}{2} I + (\K_D^{\frac{\omega}{v_b}})^*\right)\left(\S_D^{\frac{\omega}{v_b}}\right)^{-1}[u^{\mathrm{in}}].
		\end{equation}
	When $c_3 \neq 0$ and $\omega$ is real, we know that $\mathcal{A}^\omega$ is invertible with bounded inverse. Following the proof of \Cref{thm:res0}, we have that $\eta$ satisfies
		$$
		\eta=q_1\chi_{\partial D_1}+q_2\chi_{\partial D_2}-u^{\mathrm{in}}+O(\omega^2+\delta),
		$$
		which proves \eqref{eq:scattered}. To prove the equation for $q$, we proceed as in the proof of \Cref{thm:res0} by expanding $\A^\omega [\eta]$ and integrating around $\p D_i$, to get
		\begin{align*}
			\int_{\p D_i} \mathcal{A}^\omega[\eta]\dx \sigma &= -q_i|D_1|  \frac{\omega^2}{v_b^2} -\delta \int_{\p D_i} \left(\S_D^{\omega\alpha_0,\omega}\right)^{-1}\eta\dx \sigma  +\int_{D_i}  \frac{\omega^2}{v_b^2}u^{\mathrm{in}}\dx x +O(\delta\omega^2+\omega^4).
		\end{align*}
		Turning to the right-hand side of \eqref{eq:eq}, we have
		\begin{align*}
			\int_{\p D_i}F[u^{\mathrm{in}}]\dx \sigma &=  - \int_{\p D_i}\left(- \frac{1}{2} I + (\K_D^{\frac{\omega}{v_b}})^*\right)\left(\S_D^{\frac{\omega}{v_b}}\right)^{-1}[u^{\mathrm{in}}]\dx \sigma + O(\delta^2) \\
			&= -\int_{\p D_i}\frac{\p }{\p \nu} \left(\S_D^{\frac{\omega}{v_b}}\right)^{-1}[u^{\mathrm{in}}]\dx \sigma + O(\delta^2) \\
			&= \frac{\omega^2}{v_b^2}\int_{D_i}u^{\mathrm{in}}\dx x+ O(\delta^2).
		\end{align*}
		Therefore, we have the equation
		$$-q_i|D_1|  \frac{\omega^2}{v_b^2} -\delta\int_{\p D_i} \left(\S_D^{\omega\alpha_0,\omega}\right)^{-1}\eta = O(\delta\omega^2 + \omega^4).$$
		Defining $ p_i = \int_{\D_i}\left({\S}_D^{\omega\alpha_0,\omega}\right)^{-1}[u^{\mathrm{in}}]\de\sigma$, the above equation can be written in matrix form as
		$$\left(\delta{C}^{0} + {\omega\delta}C^{1,\alpha_0} -\frac{\omega^2|D_1|}{\delta v_b^2}I  \right)\begin{pmatrix}q_1\\q_2\end{pmatrix}
		= -\delta\begin{pmatrix}p_1\\p_2\end{pmatrix} +O(\delta\omega^2 +\omega^4),
		$$	
		which proves the claim.
	\end{proof}
\subsection{Scattering matrix and Fano-type transmission anomaly}\label{sec:Sfano}
	We are now able to compute the scattering matrix and demonstrate the Fano-type asymmetric transmission line. We define the coefficients	
	$$R_{j,\pm} = \frac{1}{2 \iu k_3L^2}\int_{\D} e^{-\iu \k_\pm\cdot y}\left(\psi_j^{0}(y) + \omega\psi_j^{1,\alpha_0}(y)\right)\dx \sigma(y), \quad i = 1,2,$$
	which describe the radiation of $S_j^{\alpha,\omega}$ as $x\to \pm \infty$. As $\omega \rightarrow 0$, we have the following asymptotic behaviour
	\begin{align}\label{eq:R}
		R_{j,\pm} &= \frac{1}{2\iu k_3L^2}\left(\int_{\D}\psi_j^{0}(y)\dx \sigma(y) - \int_{\D} \iu \k_\pm\cdot y\psi_j^{0}(y)\dx \sigma(y) + \omega\int_{\D} \psi_j^{1,\alpha_0}\dx \sigma\right) +  O(\omega) \nonumber\\
		&= - \frac{\k_\pm \cdot \mathbf{c}_j}{2 k_3L^2} + \frac{1}{2 \iu w_3L^2}\int_{\D} \psi_j^{1,\alpha_0}\dx \sigma  + O(\omega) = \frac12 -\frac{1}{2 k_3L^2}\big( \k_\pm - (\alpha,0)\big)\cdot \c_j + O(\omega) \nonumber\\
		&=	\frac12 \pm\frac{(-1)^j c_3}{2 L^2} + O(\omega).
	\end{align}
	We then have the following main theorem.
	\begin{thm} \label{thm:phano}
		Assume that $c_3 \neq 0$ and that $0\leq \omega \leq K\sqrt{\delta}$ for some constant $K$. Then we have the following asymptotic expansion of the scattering matrix as $\delta \to 0$
		\begin{equation}\label{eq:scattering}		 
		 S = \frac{\omega_1}{\omega_1-\omega}\begin{pmatrix} 1&1\\1&1\end{pmatrix} +\frac{2\iu\omega\Im(\omega_2)}{\omega_2^2-\omega^2}\begin{pmatrix} 1&-1\\-1&1\end{pmatrix} - \begin{pmatrix} 1&0\\0&1\end{pmatrix} + O(\delta^{1/2}),		 %S =  \ds \frac{\iu \omega^3 \alpha_{0,1}^2 c \int_{D_1} y_1}{w_3 L^2(\omega^2 -\omega_1^2)}\begin{pmatrix} 1   & 1  \\ 1& 1 \end{pmatrix} -\frac{1}{\omega^2- \omega_2^2}\begin{pmatrix} \omega^2 &\omega_2^2\\ \omega_2^2 & \omega^2\end{pmatrix} + O(\omega),
	 \end{equation}	
	where the error term is uniform with respect to $\omega$.
	\end{thm}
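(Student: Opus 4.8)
The plan is to read the reflection and transmission coefficients off the far-field expansion of the representation in \Cref{prop:sol}, and then to determine the vector $q=(q_1,q_2)^{\mathrm T}$ by diagonalising the associated linear system in the eigenbasis of $C^0$. First I would apply \Cref{prop:radiation} term by term to \eqref{eq:scattered}: the potentials $S_j^{\alpha,\omega}$ radiate with the coefficients $R_{j,\pm}$ of \eqref{eq:R}, while $-\S_D^{\alpha,\omega}\left(\S_D^{\alpha,\omega}\right)^{-1}[u^{\mathrm{in}}]$ contributes $-(1+O(\omega))e^{\iu\k_\pm\cdot x}$ by \eqref{eq:uinR}. Comparing with \eqref{eq:utot} gives $t=q_1R_{1,+}+q_2R_{2,+}$ and $r=q_1R_{1,-}+q_2R_{2,-}-1$; inserting $R_{j,\pm}=\tfrac12\pm\tfrac{(-1)^jc_3}{2L^2}+O(\omega)$ yields, with $Q_\pm:=q_1\pm q_2$, the expressions $t=\tfrac12 Q_+ -\tfrac{c_3}{2L^2}Q_-$ and $r=\tfrac12 Q_+ +\tfrac{c_3}{2L^2}Q_- -1$ up to $O(\omega)$. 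Everything thus reduces to computing $Q_+$ and $Q_-$.

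The second step is to diagonalise $M:=C^0+\omega C^{1,\alpha_0}-\tfrac{\omega^2|D_1|}{\delta v_b^2}I$ from \Cref{prop:sol} in the basis $e_\pm=\tfrac1{\sqrt2}(1,\pm1)^{\mathrm T}$ of eigenvectors of $C^0$ (eigenvalues $0$ and $2C_{11}^0$, by \Cref{lem:cap0}). Using the eigenvalue perturbations from the proof of \Cref{thm:res0}, the diagonal entries of $M$ are $\mu_1=-\iu\omega w_3L^2-\tfrac{\omega^2|D_1|}{\delta v_b^2}=-\tfrac{|D_1|}{\delta v_b^2}\,\omega(\omega-\omega_1)$ and $\mu_2=2C_{11}^0-\tfrac{\iu\omega w_3c_3^2}{L^2}-\tfrac{\omega^2|D_1|}{\delta v_b^2}$; factoring the latter quadratic and using the formula for $\omega_2$ gives $\mu_2=-\tfrac{|D_1|}{\delta v_b^2}(\omega-\omega_2)(\omega+\overline{\omega_2})=\tfrac{|D_1|}{\delta v_b^2}(\omega_2^2-\omega^2)\bigl(1+O(\delta^{1/2})\bigr)$. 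The off-diagonal entries, coming from the $\left(\begin{smallmatrix}0&1\\-1&0\end{smallmatrix}\right)$ part of $C^{1,\alpha_0}$, equal $\pm\beta$ with $\beta=\iu\omega(\alpha_0,0)\cdot\c_1=O(\delta^{1/2})$. Inverting then gives, to leading order, $Q_+=-(p_1+p_2)/\mu_1$ and $Q_-=-(p_1-p_2)/\mu_2$.

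The third step computes the right-hand side. From \eqref{eq:uinR} with $k_3=\omega w_3$ one obtains $p_1+p_2=\int_{\D}\left(\S_D^{\omega\alpha_0,\omega}\right)^{-1}[u^{\mathrm{in}}]\de\sigma=2\iu\omega w_3L^2+O(\omega^2)$, so that after using $\delta v_b^2 w_3L^2/|D_1|=\iu\omega_1$ one finds $Q_+=2\omega_1/(\omega_1-\omega)+O(\delta^{1/2})$. For $p_1-p_2$ I would expand $u^{\mathrm{in}}=1+\iu\omega\w\cdot x+O(\omega^2)$, use $\S_0^{\alpha_0}[1]=\psi_1^0+\psi_2^0=0$ (\Cref{lem:cap0}), and collect the $\omega$-order terms: the $\S_{-1}^{\alpha_0}[1]$ part is governed by the row sums of $C^{1,\alpha_0}$ from \Cref{lem:C1}, contributing $-2\iu\omega(\alpha_0,0)\cdot\c_1$, while the $\iu\omega\,\S_0^{\alpha_0}[\w\cdot x]$ part equals $2\iu\omega\,\w\cdot\c_1$ by the self-adjointness of the static single layer potential together with the definition $\c_i=\int_{\D}y\,\psi_i^0\,\de\sigma$ (exactly as in the proof of \Cref{lem:C1}). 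Since $\w-(\alpha_0,0)=(0,0,w_3)$, these combine to $p_1-p_2=2\iu\omega w_3c_3+O(\omega^2)$; feeding this through $Q_-$ and $\Im(\omega_2)=-\delta v_b^2 w_3c_3^2/(2|D_1|L^2)$ gives $\tfrac{c_3}{2L^2}Q_-=\tfrac{2\iu\omega\Im(\omega_2)}{\omega_2^2-\omega^2}+O(\delta^{1/2})$. Assembling $S=\left(\begin{smallmatrix}r&t\\t&r\end{smallmatrix}\right)$ then produces the claimed decomposition into the symmetric $\left(\begin{smallmatrix}1&1\\1&1\end{smallmatrix}\right)$, antisymmetric $\left(\begin{smallmatrix}1&-1\\-1&1\end{smallmatrix}\right)$ and background $-I$ parts.

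The main obstacle is the \emph{uniformity} of the $O(\delta^{1/2})$ error over the whole window $0\le\omega\le K\sqrt\delta$, and in particular through the sharp resonance $\omega\approx\Re(\omega_2)$, where $\mu_2=O(\delta^{1/2})$ and $1/\mu_2$ is as large as $O(\delta^{-1/2})$. There every quantity divided by $\mu_2$ must be controlled one order beyond leading: I must verify that the off-diagonal coupling $\beta$ and the $\beta^2$ term in $\det M=\mu_1\mu_2+\beta^2$ perturb $Q_-$ only at relative order $O(\delta^{1/2})$, and that replacing $(\omega-\omega_2)(\omega+\overline{\omega_2})$ by $-(\omega_2^2-\omega^2)$ (equivalently $|\omega_2|^2$ by $\omega_2^2$) costs only $O(\delta^{1/2})$ relatively, since the two differ by $2\iu\Im(\omega_2)(\omega-\omega_2)=O(\delta^{3/2})$ while the denominator is $\gtrsim\delta^{3/2}$ at resonance. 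Away from $\Re(\omega_2)$ one has $\mu_2=O(1)$ and these estimates are immediate, so the delicate bookkeeping is confined to a shrinking neighbourhood of the sharp resonance.
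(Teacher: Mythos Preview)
Your proposal is correct and follows essentially the same route as the paper. The paper also applies \Cref{prop:radiation} to \eqref{eq:scattered}, computes $p$ (via the adjoint identity $\int_{\partial D_i}(\S_D^{\alpha,\omega})^{-1}[u^{\mathrm{in}}]=\int_{\partial D}u^{\mathrm{in}}(\S_D^{-\alpha,\omega})^{-1}[\chi_{\partial D_i}]$ rather than your direct expansion, but obtaining the same $p=\iu k_3L^2(1,1)^{\mathrm T}+\iu k_3c_3(1,-1)^{\mathrm T}+O(\delta)$), inverts $M$ explicitly, and then groups the result into $(1,1)^{\mathrm T}$ and $(1,-1)^{\mathrm T}$ parts---which is exactly your diagonalisation in the $e_\pm$ basis written out componentwise. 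For the uniformity near $\Re(\omega_2)$ the paper introduces correction factors $\epsilon_1,\epsilon_2$ carrying the off-diagonal coupling and disposes of them by the same near/away-from-resonance case split you describe; your $\beta^2$-in-$\det M$ and $|\omega_2|^2$-vs-$\omega_2^2$ estimates are precisely what is needed there.
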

	
	\begin{proof}
		We begin by computing $p$. Recall that $u^{\mathrm{in}}(x) =
		e^{\iu\k_+\cdot x}$. We then have
		\begin{align*} \label{eq:q}
			p_i=\int_{\D_i}\left({\S}_D^{\alpha,\omega}\right)^{-1}[u^{\mathrm{in}}]\de\sigma &= \int_{\D}u^{\mathrm{in}}\left({\S}_D^{-\alpha,\omega}\right)^{-1}[\chi_{\p D_i}]\de\sigma \\&= \int_{\D}\iu \k_+\cdot x \psi_i^{0} \de\sigma + \omega\int_{\D} \psi_i^{1,-\alpha_0}\de\sigma + O(\omega^2) \nonumber\\ &= \iu k_3 L^2 +\iu\big(\k_+-(\alpha,0) \big)\cdot \c_i + O(\delta),
		\end{align*}
	so that 
	$$p= \iu k_3 L^2\begin{pmatrix}1\\1\end{pmatrix}+\iu k_3c_3\begin{pmatrix}1\\-1\end{pmatrix} + O(\delta).$$
	As $x\to \pm \infty$, we have using \eqref{eq:R} and \eqref{eq:uinR} that
	\begin{align*}
	u-u^{\mathrm{in}} &\sim \left(q_1R_{1,\pm} +q_2R_{2,\pm} -1\right)e^{\iu \k_\pm \cdot x}\\
	&\sim\left(\frac{q_2+q_1}{2} \pm \frac{c_3}{L^2} \frac{q_2-q_1}{2}- 1+O(\omega)\right)e^{\iu \k_\pm \cdot x}.
	\end{align*}
	For all real $\omega$ with $0\leq \omega\leq K\sqrt{\delta}$ and for fixed $c_3\neq 0$, there exists a constant $A>0$ such that $|\omega-\omega_i| > A\delta$ for $i=1,2$. Then $\left( {C}^{0} + {\omega}C^{1,\alpha_0} -\frac{\omega^2|D_1|}{\delta v_b^2}I  \right)$ is invertible and its inverse satisfies $\left( {C}^{0} + {\omega}C^{1,\alpha_0} -\frac{\omega^2|D_1|}{\delta v_b^2}I  \right)^{-1} = O(\delta^{-1/2})$ uniformly in $\omega$. We have that
	$$d:=\det\left( {C}^{0} + {\omega}C^{1,\alpha_0} -\frac{\omega^2|D_1|}{\delta v_b^2}I  \right) = \left(-\iu  k_3 L^2- \frac{\omega^2|D_1|}{\delta v_b^2} \right)\left(2 C_{11}^0-\frac{\iu k_3 c_3^2}{L^2}- \frac{\omega^2|D_1|}{\delta v_b^2}\right) +O(\delta).$$
	Therefore
	\begin{multline*}
		\left( {C}^{0} + {\omega}C^{1,\alpha_0} -\frac{\omega^2|D_1|}{\delta v_b^2}I  \right)^{-1} \\ = \frac{1}{d}\left(\left(C_{11}^0-\frac{\iu k_3 c_3^2}{2L^2} \right)\begin{pmatrix} 1 & 1 \\ 1 & 1\end{pmatrix}  -\frac{\iu k_3 L^2}{2} \begin{pmatrix} 1 & -1 \\ -1 & 1\end{pmatrix} + \iu  (\alpha,0)\cdot\c_1\begin{pmatrix} 0 & -1 \\ 1 & 0\end{pmatrix} -\frac{\omega^2|D_1|}{\delta v_b^2}I\right).
		\end{multline*}
	Using \Cref{prop:sol}, we therefore have
	\begin{multline*}
		q = -\frac{1}{d} \left( \iu k_3 L^2\left(2C_{11}^0-\frac{\iu k_3 c_3^2}{L^2} -\frac{\omega^2|D_1|}{\delta v_b^2} \right)  + \iu  (\alpha,0)\cdot\c_1 (\iu k_3c_3)\right)\begin{pmatrix} 1 \\ 1\end{pmatrix} \\
		- \frac{1}{d} \left(\iu k_3c_3\left(-\iu k_3L^2-\frac{\omega^2|D_1|}{\delta v_b^2}  \right) - \iu  (\alpha,0)\cdot\c_1(\iu k_3L^2) \right)\begin{pmatrix} 1 \\ -1\end{pmatrix} + O(\delta^{1/2}),
	\end{multline*}
uniformly in $\omega$. Simplifying the above expression, we obtain
	$$
	q = \frac{-\iu k_3 L^2}{ -\iu  k_3 L^2- \frac{\omega^2|D_1|}{\delta v_b^2} } \left( 1 + \epsilon_1\right)\begin{pmatrix} 1 \\ 1\end{pmatrix} + \frac{-\iu k_3c_3}{ 2 C_{11}^0-\frac{\iu k_3 c_3^2}{L^2}- \frac{\omega^2|D_1|}{\delta v_b^2} } \left(1 - \epsilon_2\right)\begin{pmatrix} 1 \\ -1\end{pmatrix} + O(\delta^{1/2}),
	$$
	where 
	$$\epsilon_1 = \frac{\iu  (\alpha,0)\cdot\c_1 \frac{c_3}{L^2}}{\left(2 C_{11}^0-\frac{\iu k_3 c_3^2}{L^2}- \frac{\omega^2|D_1|}{\delta v_b^2}\right)}, \qquad \epsilon_2 = \frac{\iu (\alpha,0)\cdot\c_1\frac{L^2}{c_3}}{\left(-\iu  k_3 L^2- \frac{\omega^2|D_1|}{\delta v_b^2} \right)}.$$
	When $\omega$ is away from $\omega_1$, we have $\epsilon_2 = O(\delta^{1/2})$. For $\omega-\omega_1=O(\delta)$ we have 
	$$\frac{\iu k_3c_3}{ \left(2 C_{11}^0-\frac{\iu k_3 c_3^2}{L^2}- \frac{\omega^2|D_1|}{\delta v_b^2}\right)} \left(1 - \epsilon_2\right) = O(\delta^{1/2}).$$
	An analogous argument for $\omega$ close to $\omega_2$ shows that for $0\leq \omega \leq K\sqrt{\delta}$ it holds that
	$$
	q = \frac{-\iu k_3 L^2}{ \left(-\iu  k_3 L^2- \frac{\omega^2|D_1|}{\delta v_b^2} \right)}\begin{pmatrix} 1 \\ 1\end{pmatrix} + \frac{-\iu k_3c_3}{ \left(2 C_{11}^0-\frac{\iu k_3 c_3^2}{L^2}- \frac{\omega^2|D_1|}{\delta v_b^2}\right)}\begin{pmatrix} 1 \\ -1\end{pmatrix} +O(\delta^{1/2}).
	$$
	Then, as $x\to \pm\infty$, we can see that
	\begin{align*}
	u-u^{\mathrm{in}} &\sim\left(\frac{q_2+q_1}{2} \pm \frac{c_3}{L^2} \frac{q_2-q_1}{2}- 1+O(\omega)\right)e^{\iu \k_\pm \cdot x}\\
	&\sim\left(
	 \frac{-\iu k_3 L^2}{ -\iu  k_3 L^2- \frac{\omega^2|D_1|}{\delta v_b^2} }\pm \frac{\iu k_3c_3^2}{ L^2\left(2 C_{11}^0-\frac{\iu k_3 c_3^2}{L^2}- \frac{\omega^2|D_1|}{\delta v_b^2}\right)} - 1 + O(\delta^{1/2})\right)e^{\iu \k_\pm \cdot x},
	\end{align*}
	which gives
	$$t = \frac{-\iu k_3 L^2}{ -\iu  k_3 L^2- \frac{\omega^2|D_1|}{\delta v_b^2} } + \frac{\iu k_3c_3^2}{ L^2\left(2 C_{11}^0-\frac{\iu k_3 c_3^2}{L^2}- \frac{\omega^2|D_1|}{\delta v_b^2}\right)} + O(\delta^{1/2}), $$
	and
	$$r = \frac{-\iu k_3 L^2}{ -\iu  k_3 L^2- \frac{\omega^2|D_1|}{\delta v_b^2} } - \frac{\iu k_3c_3^2}{ L^2\left(2 C_{11}^0-\frac{\iu k_3 c_3^2}{L^2}- \frac{\omega^2|D_1|}{\delta v_b^2}\right)} - 1+O(\delta^{1/2}).
	$$
	In light of \Cref{thm:res0} we can rewrite this as
	$$S = \frac{\omega_1}{\omega_1-\omega}\begin{pmatrix} 1&1\\1&1\end{pmatrix} +\frac{2\iu\omega\Im(\omega_2)}{\omega_2^2-\omega^2}\begin{pmatrix} 1&-1\\-1&1\end{pmatrix} - \begin{pmatrix} 1&0\\0&1\end{pmatrix} + O(\delta^{1/2}),$$
	which proves the claim.
	\end{proof}

	\begin{remark}
		At the resonances, \textit{i.e.} when $\omega = 0$ or $\omega = \Re(\omega_2)$, the scattering matrix is given by
		$$S(0) = \begin{pmatrix} 0&1\\1&0\end{pmatrix} + O(\delta^{1/2}) \quad\text{and}\quad S(\Re(\omega_2)) = \begin{pmatrix} 0&-1\\-1&0\end{pmatrix} + O(\delta^{1/2}),$$
		corresponding to transmission peaks where the transmittance is close to $1$. The widths of these peaks are specified by the corresponding imaginary part $\Im(\omega_1)$ and $\Im(\omega_2)$.
	\end{remark}
	\begin{remark}
		\Cref{thm:phano} can be used to demonstrate the Fano-type transmission anomaly. 
		If we tune the parameters of the system so that $\Im(\omega_1)$ is large while $\Im(\omega_2)$ is small we can have, for small $\omega^*$, that
		$$
		\frac{\omega_1}{\omega_1-(\Re(\omega_2)-\omega^*)} \approx 
		\frac{\omega_1}{\omega_1-(\Re(\omega_2)+\omega^*)}  \approx 
		\frac{\omega_1}{\omega_1-\Re(\omega_2)}  =:t_1,$$
		where $t_1$ is not too small. In this case, the transmission coefficient is given by
		$$t(\Re(\omega_2) + \omega^*) \approx \frac{1}{1-\frac{\Re(\omega_2)}{\omega_1}} - \frac{1}{1-\frac{\omega^*}{\iu\Im(\omega_2)}}.$$
		In particular, at $\omega^* = \Re(\omega_2)\frac{\Im(\omega_2)}{\Im(\omega_1)}$, we can see that 
		$$t(\Re(\omega_2)+\omega^*) \approx 0, \quad t(\Re(\omega_2)-\omega^*) \approx 2t_1.$$
		We emphasize that $\omega^* > 0$ and that $t$ is close to zero at $\omega=\Re(\omega_2)+\omega^*$ and not at $\omega = \Re(\omega_2)-\omega^*$. In other words, we have an asymmetric transmission peak at $\omega=\Re(\omega_2)$. For some frequency slightly larger than $\Re(\omega_2)$ the transmittance will be close to zero, but for all frequencies slightly lower than $\Re(\omega_2)$ the transmittance will be nonzero.
	\end{remark}

%%% Figure pages begin here

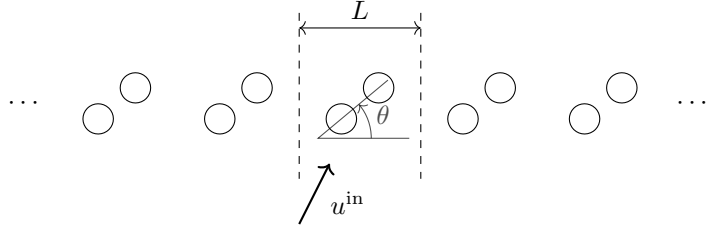
\begin{figure}[p]%[tbh]
	\centering
\begin{tikzpicture}[scale=0.4]
\begin{scope}[rotate=40]
\draw (0.8,0) circle (0.5);
\draw (-0.8,0) circle (0.5);
\end{scope}
\coordinate (oo) at (220:1.8);
\coordinate (a) at ($(0:3)+(oo)$);
\coordinate (b) at ($(40:3)+(oo)$);
\coordinate (o) at ($(0,0)+(oo)$);
\draw[opacity=0.7] (a) -- (o) -- (b);
\pic [draw, opacity=0.7, ->, "$\theta$", angle eccentricity=1.3,angle radius=0.7cm] {angle = a--o--b};
\node at (11,0){$\cdots$};
\node at (-11,0){$\cdots$};
\draw[->,thick] (-2,-4) -- (-1,-2) node[pos=0.7,below right]{$u^{\mathrm{in}}$};
\draw[dashed] (-2,3) -- (-2,-2.5) (2,3) -- (2,-2.5);
\draw[<->] (-2,2.5) -- (2,2.5) node[pos=0.5,above]{$L$};
\begin{scope}[xshift=4cm,rotate=40]
\draw (0.8,0) circle (0.5);
\draw (-0.8,0) circle (0.5);
\end{scope}
\begin{scope}[xshift=-4cm,rotate=40]
\draw (0.8,0) circle (0.5);
\draw (-0.8,0) circle (0.5);
\end{scope}
\begin{scope}[xshift=8cm,rotate=40]
\draw (0.8,0) circle (0.5);
\draw (-0.8,0) circle (0.5);
\end{scope}
\begin{scope}[xshift=-8cm,rotate=40]
\draw (0.8,0) circle (0.5);
\draw (-0.8,0) circle (0.5);
\end{scope}
\end{tikzpicture}
\caption{The front view of a symmetric metascreen with an incident plane wave $u^{\mathrm{in}}$. In this case, we have circular resonators arranged in a $\mathcal{P}$-symmetric dimer that is inclined at an angle of $\theta$ to the plane of the metascreen.} \label{fig:2D}
\end{figure}

\begin{figure}[p] 
	\begin{subfigure}[b]{0.45\linewidth}
		\begin{center}
			\includegraphics[width=1\linewidth]{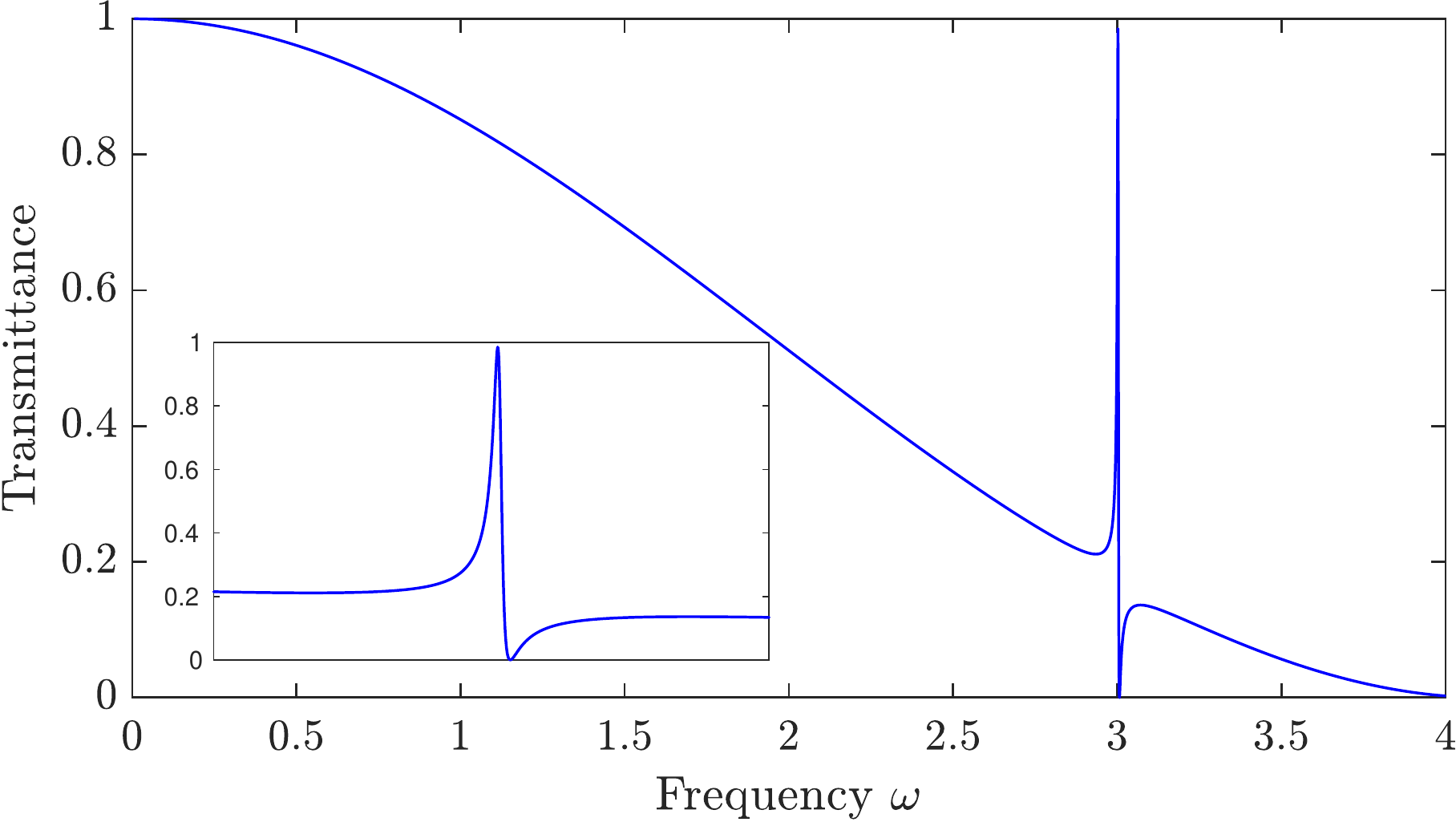}
		\end{center}
		\caption{Transmission spectrum.	} \label{fig:fano0T}
	\end{subfigure}
	\hspace{10pt}
	\begin{subfigure}[b]{0.45\linewidth}
		\begin{center}
			\includegraphics[width=1\linewidth]{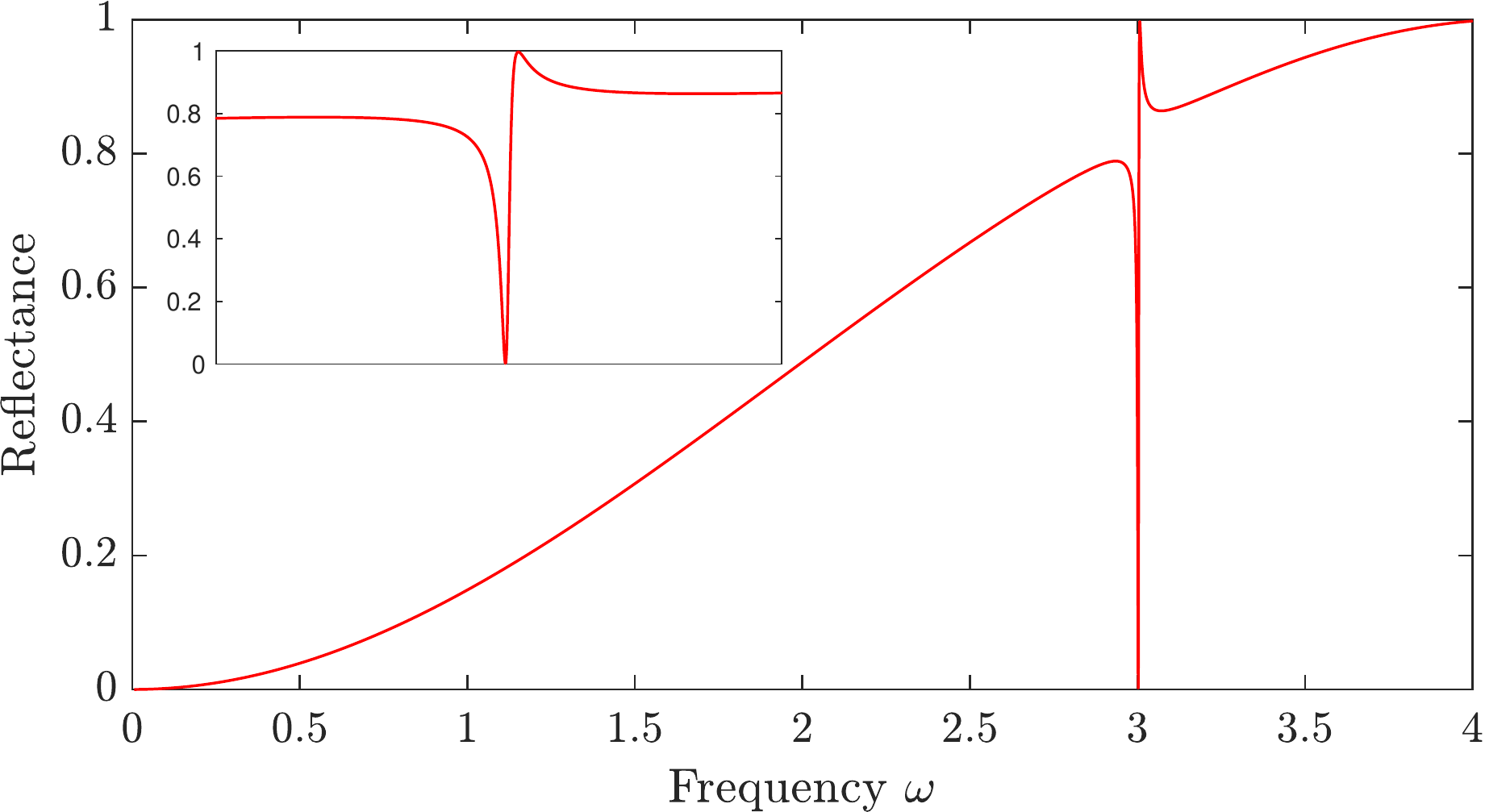}
		\end{center}
		\caption{Reflection spectrum.}\label{fig:fano0R}
	\end{subfigure}
	\caption{Transmittance (a) and reflectance (b) in the case $\delta =  0.02$, computed using the multipole discretization method. The asymmetric Fano-type transmission curve is clearly visible. Choosing $\delta$ rather large makes the Fano-type asymmetry more pronounced. Here, we use $\theta = 0.025\pi$. }\label{fig:fano}
\end{figure}

\begin{figure}[p] 
	\begin{subfigure}[b]{0.45\linewidth}
		\begin{center}
			\includegraphics[width=1\linewidth]{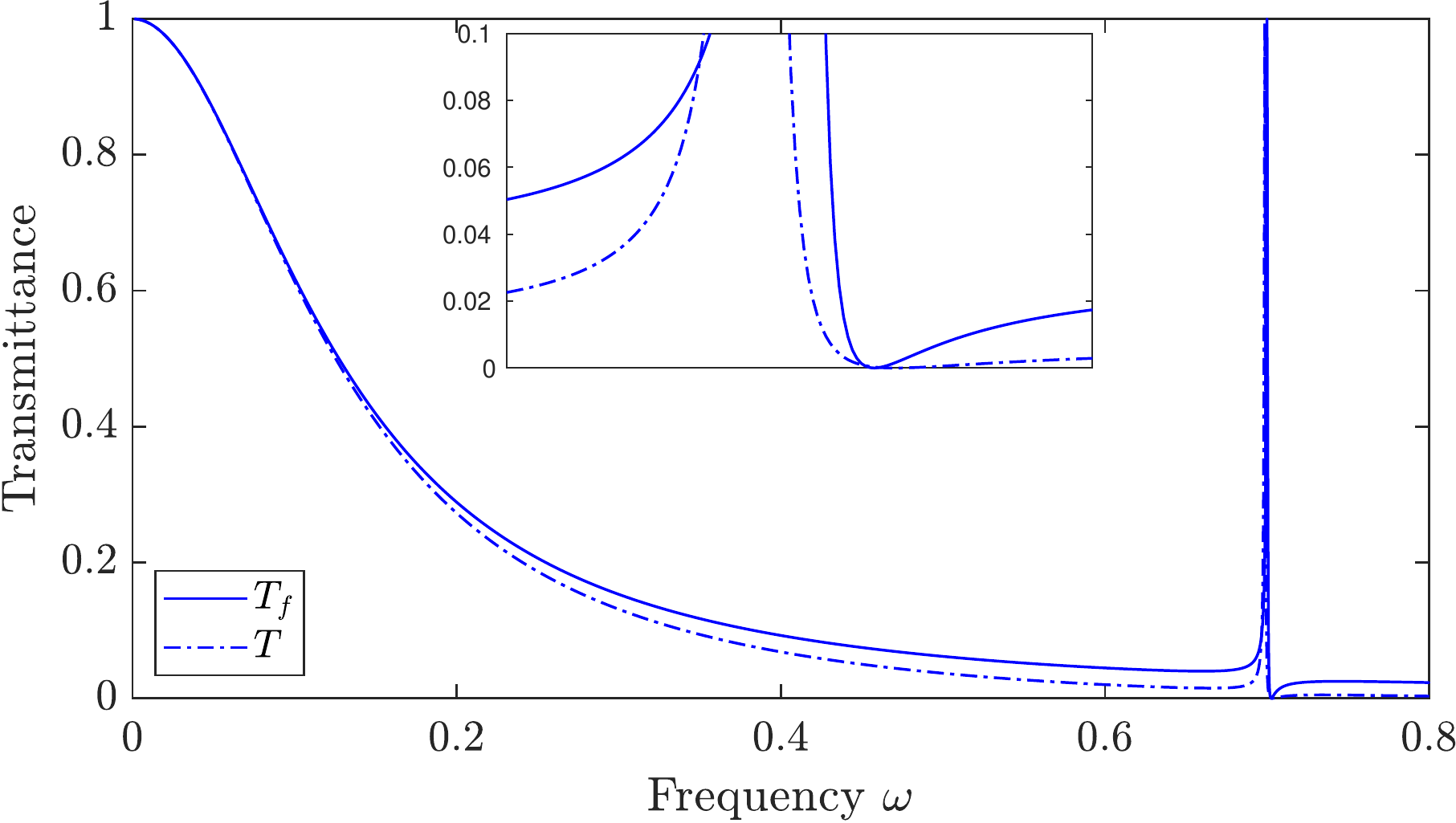}
		\end{center}
		\caption{Transmission spectrum.} \label{fig:fano2}
	\end{subfigure}
	\hspace{10pt}
	\begin{subfigure}[b]{0.45\linewidth}
		\begin{center}
			\includegraphics[width=1\linewidth]{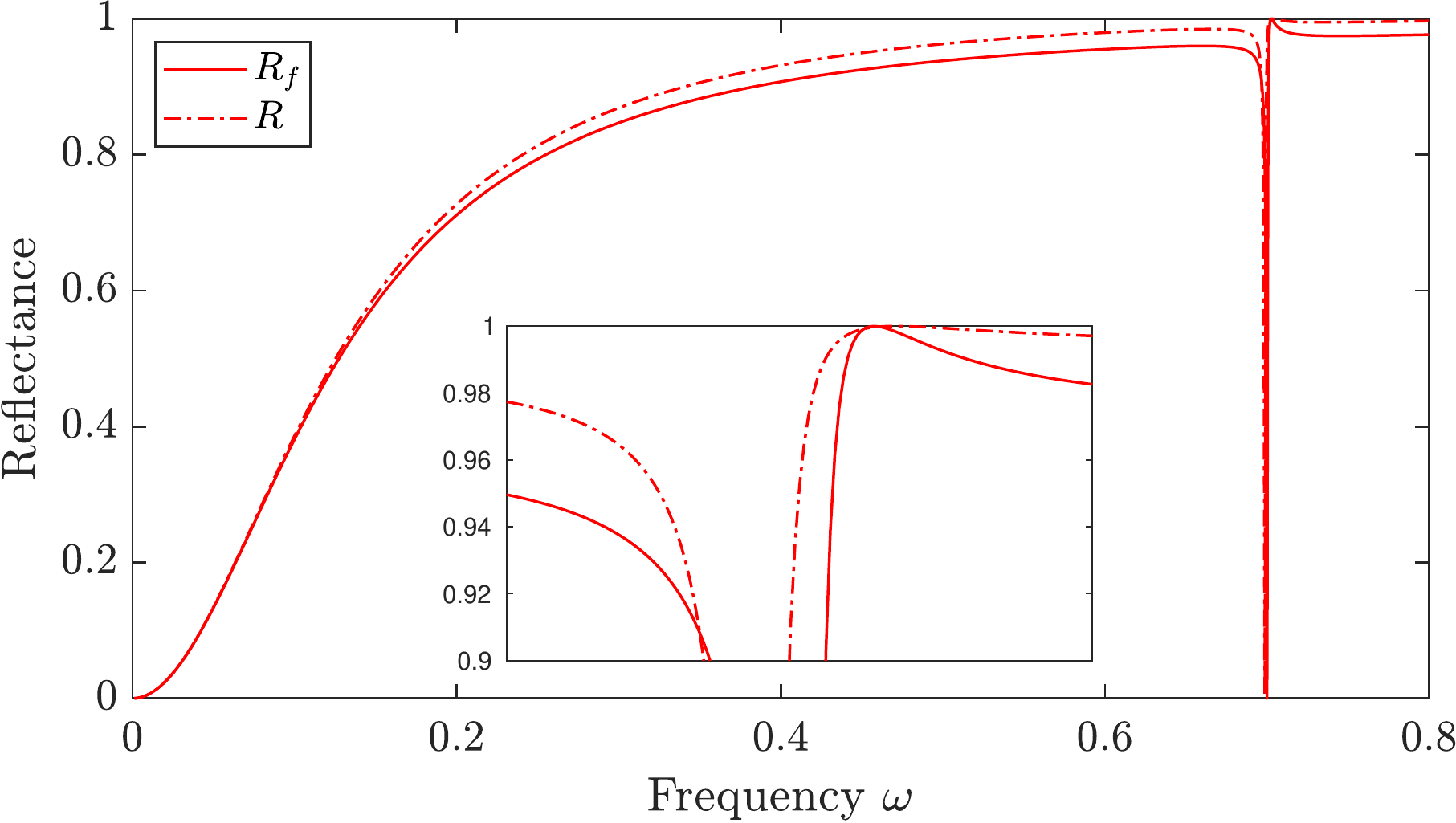}
		\end{center}
		\caption{Reflection spectrum.}\label{fig:fano2f}
	\end{subfigure}
	\caption{Transmittance (a) and reflectance (b) in the case $\delta = 10^{-3}$, computed using the multipole discretization (dashed line) and the asymptotic formulas (solid line). Compared to \Cref{fig:fano}, the smaller value of $\delta$ here means that the resonances occur in the subwavelength regime and that the asymptotic formulas provide a good approximation. Here, we use $\theta = 0.05\pi$.}\label{fig:fanos2}
\end{figure}

\begin{figure}%[p] 
	\begin{subfigure}[b]{0.45\linewidth}
		\begin{center}
			\includegraphics[width=1\linewidth]{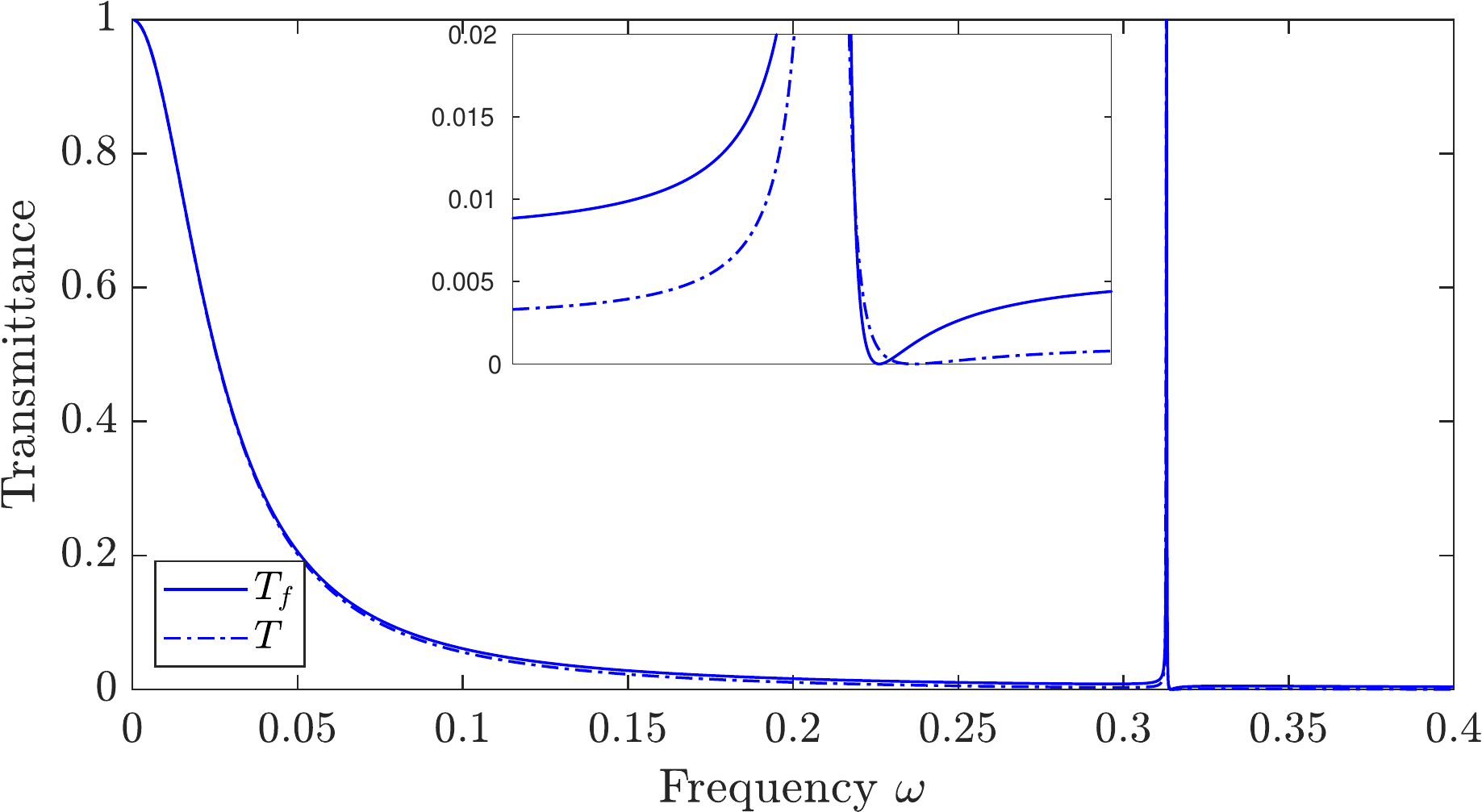}
		\end{center}
		\caption{Transmission spectrum.} \label{fig:fano3}
	\end{subfigure}
	\hspace{10pt}
	\begin{subfigure}[b]{0.45\linewidth}
		\begin{center}
			\includegraphics[width=1\linewidth]{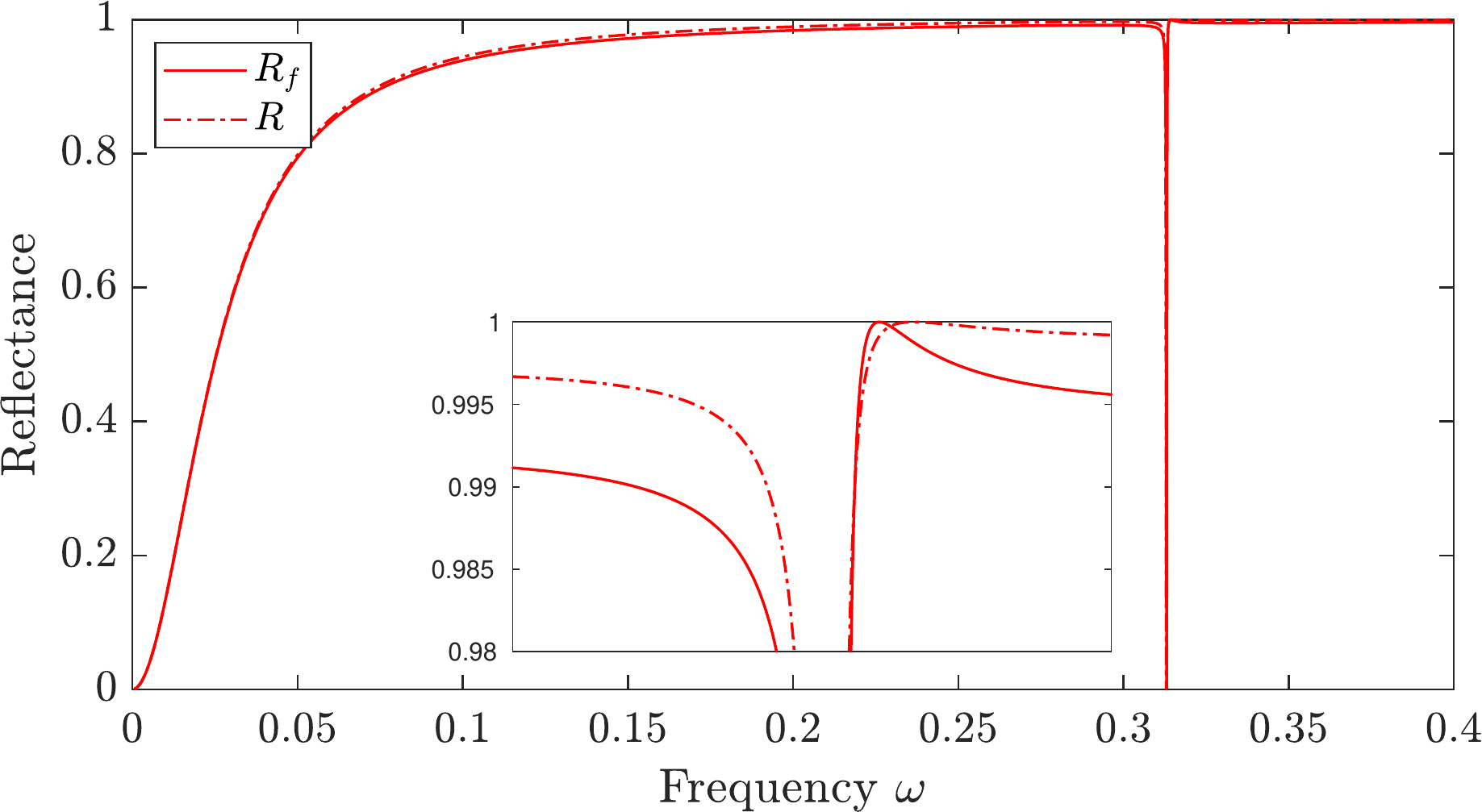}
		\end{center}
		\caption{Reflection spectrum.}\label{fig:fano3f}
	\end{subfigure}
	\caption{Transmittance (a) and reflectance (b) in the case $\delta = 2\cdot 10^{-4}$, computed using the multipole discretization (dashed line) and the asymptotic formulas (solid line). Here, we use $\theta = 0.05\pi$.} \label{fig:fanos3}
\end{figure}

\begin{figure}%[p] 
	\begin{center}
		\includegraphics[width=0.6\linewidth]{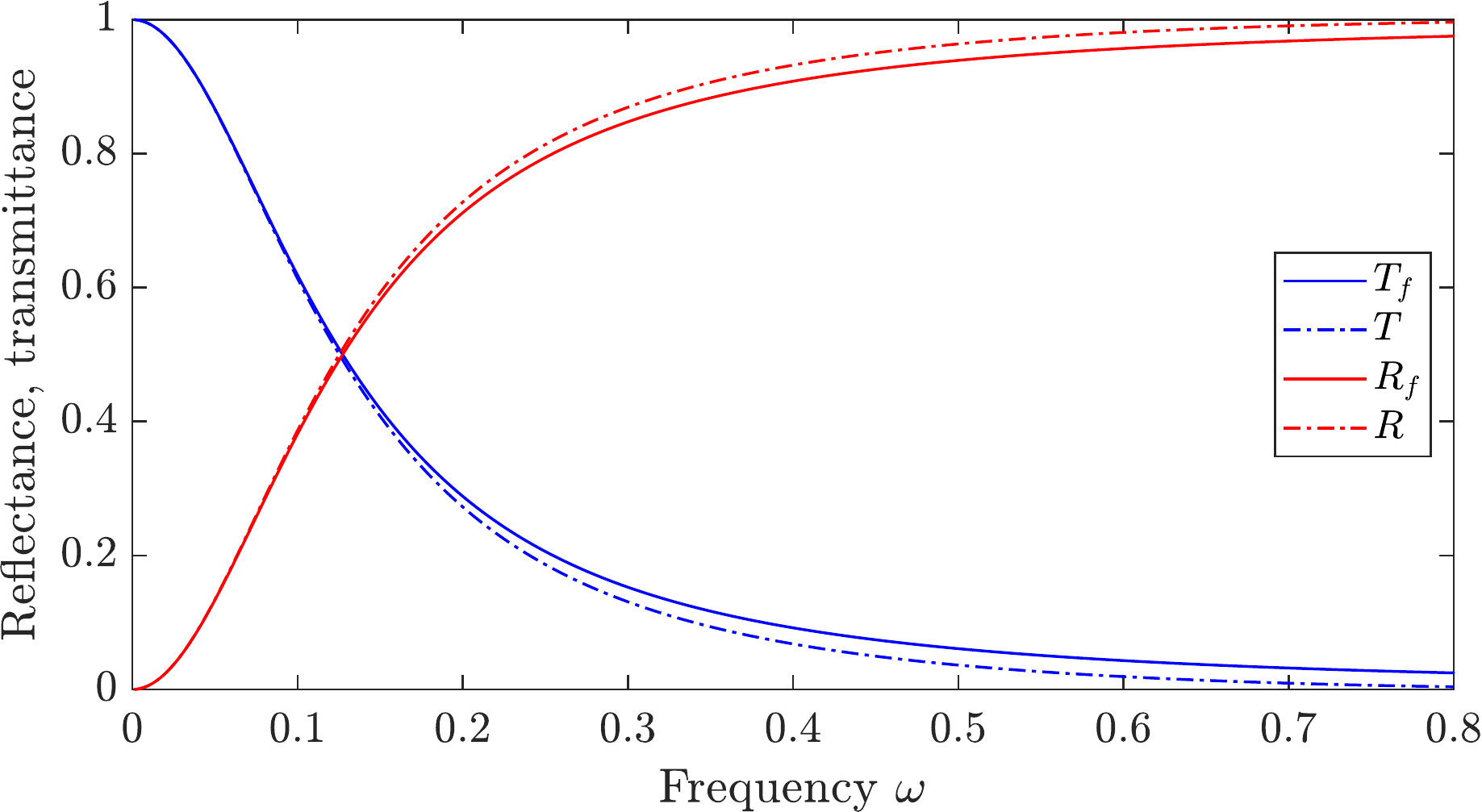}
	\end{center}
	\caption{In a symmetric structure, there is a bound state in the continuum. Here, we repeat the analysis from \Cref{fig:fanos2} (with $\delta = 1/1000$) but with $\theta = 0$. We observe that the transmission peak around $\omega\approx 0.7$ is not present in this case, consistent with \Cref{prop:trans0} and the fact that corresponding resonant mode is a bound state in the continuum. This was computed using both the multipole discretization (dashed line) and the asymptotic formulas (solid line).} \label{fig:fanoBIC}
\end{figure}	

\subsection{Numerical illustrations}\label{sec:num}
Here, we compute numerically the transmission and reflection spectra of a two-dimensional analogue of the structure studied above. We assume that $D$ consists of two disks $D_1$ and $D_2$, which are both of radius $R_D$ and are separated by a distance $d$. Moreover, we denote by $\theta$ the angle at which $D$ is rotated from being parallel to the $x_1$-axis, as depicted in \Cref{fig:2D}. In order to achieve a Fano-type resonance, we will choose a small but nonzero $\theta$.

We use the material parameters $L=1$, $R_D=0.05$, $d=0.3$, $\alpha_0 = 0$ and $v=v_b=1$ (these are the same parameters used to derive the band structure in \Cref{sec:band}). We compute the transmittance $T = |t|^2$ and reflectance $R=|r|^2$ in two different ways: using the asymptotic formulas that were derived in \Cref{sec:Sfano} (denoted by $T_f$ and $R_f$) and by discretizing the operator $\A^\omega$ using the multipole method (denoted by $T$ and $R$). For details on the multipole discretization method, we refer to, \textit{e.g.} \cite{ammari2020exceptional, ammari2017subwavelength}.

Figures \ref{fig:fano}--\ref{fig:fanos3} show numerically computed transmission spectra for different values of $\delta$. In \Cref{fig:fano} we chose $\theta= 0.025\pi$ while in Figures~\ref{fig:fanos2} and \ref{fig:fanos3} we chose $\theta= 0.05\pi$. All cases demonstrate Fano-type transmission anomalies, where the transmittance is close to zero for frequencies slightly above the second peak. For larger values of $\delta$, the widths of the first, broad transmission peaks are larger and the Fano-type transmission more pronounced. As expected, the asymptotic formulas are more accurate for smaller values of $\delta$.

In \Cref{fig:fanoBIC} we show the transmission spectra for the same parameters as \Cref{fig:fanos2} but with $\theta = 0$, meaning that the structure is $\P_3$-symmetric. We observe that there is no the sharp transmission peak in this case, which is expected since the eigenmodes are bound states in the continuum and are not excited by incident waves from the far field.
	
\section{Concluding remarks}
In this work, we have studied the existence of Fano-type resonances for high-contrast resonators in the subwavelength regime. We have explicitly characterized the Fano-type resonance in terms of the periodic capacitance matrix. In the symmetric case, we have proved that the structure supports bound states in the continuum. When the symmetry is perturbed, we showed that the states will interact with the far field and give rise to an asymmetric Fano-type transmission line shape. 

Building on this work, there have also been various attempts to use the topological properties of periodic structures to produce Fano-type responses that are robust as a result of being \emph{topologically protected} \cite{gao2018fano,ni2018fano, wang2020robust, zangeneh2019topological}. With the analysis of \cite{ammari2020topologically} in mind, we expect that it will be possible to create robust Fano-type resonances in the subwavelength regime by coupling  a ``continuum'' to a ``discrete'' state which is topologically protected. Moreover, in a related work \cite{ammari2021functional} we will study a large system of finitely many resonators and examine the extent to which its Fano-type transmission and reflection behaviours can be approximated by the infinite system that was studied here.

\bibliographystyle{abbrv}
\bibliography{boundstate}{}

\end{document}